\newtheorem{theorem}{Theorem}
\numberwithin{theorem}{section}
\newtheorem{proposition}[theorem]{Proposition}
\newtheorem{lemma}[theorem]{Lemma}
\newtheorem{corollary}[theorem]{Corollary}
\newtheorem{definition}[theorem]{Definition}
\newtheorem{remark}[theorem]{Remark}
\newtheorem{example}[theorem]{Example}
\newtheorem{conjecture}[theorem]{Conjecture}
\newcommand{\NN}{\mathbb{N}}
\newcommand{\RR}{\mathbb{R}}
\newcommand{\alignqedhere}{\\[\dimexpr-\baselineskip+\dp\strutbox] &\qedhere}
\newcommand{\cross}{\rotatebox[origin=c]{45}{\normalsize $\square$}}
\titleformat{\section}{\normalfont\large\bfseries}{\thesection}{1em}{}
\titleformat{\subsection}{\normalfont\small\bfseries}{\thesubsection}{1em}{}
\date{}
\title{ \textbf{Likelihood Geometry of Reflexive Polytopes}}
\author{ Carlos Am\'endola, Janike Oldekop}
\begin{document}

\maketitle

\begin{abstract}
We study the problem of maximum likelihood (ML) estimation for statistical models defined by reflexive polytopes. Our focus is on the maximum likelihood degree of these models as an algebraic measure of complexity of the corresponding optimization problem. We compute the ML degrees of all 4319 classes of three-dimensional reflexive polytopes, and observe some surprising behavior in terms of the presence of gaps between ML degrees and degrees of the associated toric varieties. We interpret these drops in the context of discriminants and prove formulas for the ML degree for families of reflexive polytopes, including the hypercube and its dual, the cross polytope, in arbitrary dimension. In particular, we determine a family of embeddings for the $d$-cube that implies ML degree one. Finally, we discuss generalized constructions of families of reflexive polytopes in terms of their ML degrees. 
\end{abstract}

\section{Introduction}

The connection between \emph{log-linear models} and \emph{toric varieties} has been a cornerstone of algebraic statistics \citep{AlgebraicStatistics}. In this setting, the combinatorial and geometric object of a convex lattice polytope is naturally associated to a discrete exponential family. More precisely, given a lattice polytope, one can associate a log-linear model whose design matrix has the lattice points as columns. In this work we focus on a special kind of lattice polytopes known as
\emph{reflexive polytopes}.

Reflexive polytopes were originally introduced by \citet{DualPolyhedraAndMirrorSymmetryForCalabiYauHypersurfacesInToricVarieties} in the context of \emph{mirror symmetry}. This relationship between geometric objects called \emph{Calabi-Yau manifolds} is one of the most notable examples of the interplay between theoretical physics and algebraic geometry. More precisely, a reflexive polytope and its dual lead to a mirror-dual pair of Calabi-Yau manifolds \citep{MirrorSymmetryAndPolarDualityOfPolytopes}, which are of specific interest in string theory. In particular, six-dimensional Calabi-Yau manifolds are used for \emph{Kaluza-Klein compactification}. This connection led to a legitimate interest in the classification of reflexive polytopes by both mathematicians and physicists. Since each reflexive polytope has a unique interior lattice point, it is a well-known consequence of \citep{LatticeVertexPolytopesWithInteriorLatticePoints, BoundsForLatticePolytopesContainingAFixedNumberOfInteriorPointsInASublattice} that there are only finitely many equivalence classes of reflexive polytopes in each dimension, up to unimodular isomorphism. In two dimensions, all polygons with exactly one interior lattice point are reflexive, resulting in 16 isomorphism classes \citep[Theorem 4.2.3]{TheNumberOfModuliOfFamiliesOfCurvesOnToricSurfaces}. Although the number of equivalence classes grows very rapidly, the three-dimensional and four-dimensional reflexive polytopes were classified by \citet{OnTheClassificationOfReflexivePolyhedra, ClassificationOfReflexivePolyhedraInThreeDimensions,CompleteClassificationOfReflexivePolyhedraInFourDimensions}. 

In addition, reflexive polytopes exhibit various combinatorial properties \citep{1224AndBeyond}. Through the study of \emph{Gorenstein polytopes}, they are closely related to commutative algebra and combinatorics \citep{CombinatorialAspectsOfMirrorSymmetry, HVectorsOfGorensteinPolytopes, GorensteinToricFanoVarieties}. In toric geometry, reflexive polytopes have also been classified as important examples of \emph{Fano varieties}. And notably in algebraic statistics, these varieties have already appeared in the context of log-linear models associated to certain \emph{phylogenetic trees} \citep{OnGeometryOfBinarySymmetricModelsOfPhylogeneticTrees}.

The statistical setup we are addressing will be that of \emph{parameter inference} via the prominent method of \emph{maximum likelihood}. Analyzing this method has been an important theme in algebraic statistics, since the associated optimization problem can be examined using tools from algebra and geometry. In general, the ML estimator is determined by solving the \emph{score equations} \citep{SolvingTheLikelihoodEquations}. The number of complex solutions to these equations for sufficiently general data is known as the \emph{maximum likelihood degree} \citep{TheMaximumLikelihoodDegree}, and it serves as a measure of the algebraic complexity of the ML estimation problem. 

A further motivation for studying reflexive polytopes is that their faces are of special interest. \citet[Proposition 2.2]{TheReflexiveDimensionOfALatticePolytope} showed that every lattice polytope is isomorphic to a face of some reflexive polytope. From a likelihood geometry perspective, given a log-linear model the faces of the underlying polytope can be used to explain the ML degree via the \emph{principal $A$-determinant} \citep[Chapter 9]{DiscriminantsResultantsAndMultidimensionalDeterminants}. Fundamental to this is the correspondence between polytopes and toric varieties mentioned above. By studying embeddings of the toric variety given by different scalings, a reduction in complexity of ML estimation can be achieved. Indeed, the ML degree of a statistical model is at most its degree \citep{TheMaximumLikelihoodDegreeOfToricVarieties, LikelihoodGeometry}. This upper bound is achieved by toric varieties scaled by generic scalings, and the principal $A$-determinant determines the locus of non-generic scalings that exhibit a lower ML degree. Of particular interest in likelihood geometry is studying polytopes for which an ML degree one scaling can be determined \citep{VarietiesWithMaximumLikelihoodDegreeOne}. For the two and three dimensional cube it is known that such a scaling exists \citep{FamiliesOfPolytopesWithRationalLinearPrecisionInHigherDimensions}. The approach presented in this paper allows the specification of a family of ML degree one scalings for the $d$-dimensional cube. In this way we give an explicit example of a polytope and associated scalings in any dimension such that the embedded toric variety has ML degree one.

Log-linear models defined by reflexive polygons were examined with regard to their ML degree by \citet*{MaximumLikelihoodEstimationOfToricFanoVarieties}. In the present paper we revisit their computations, take the next step of considering (three-dimensional) reflexive polyhedra, and prove results that hold for families of reflexive polytopes in higher dimensions. While the results in \cite{MaximumLikelihoodEstimationOfToricFanoVarieties} might suggest at first glance that an ML degree drop for a reflexive polytope is a rare occurrence - only 12.5\% of reflexive polygons exhibit such a drop - we actually find that for reflexive polyhedra more than 64\% of these exhibit an ML degree drop. Moreover, we show that in arbitrary dimension there are always reflexive polytopes that exhibit an ML degree drop (with their standard embedding).

The rest of the paper is structured as follows. In Section \ref{section2} we recall preliminaries on maximum likelihood estimation of log-linear models. The focus is on the relationship between statistical models defined by reflexive polytopes and the theory of $A$-discriminants based on \citep{MaximumLikelihoodEstimationOfToricFanoVarieties, TheMaximumLikelihoodDegreeOfToricVarieties}. Section \ref{section3} revisits the computation of the ML degree of all reflexive polygons, and corrects Theorem 3.1 in \citep{MaximumLikelihoodEstimationOfToricFanoVarieties}. Computations and results related to the ML degrees of three-dimensional reflexive polytopes are presented in Section \ref{section4}. In Section \ref{section5} we  present closed formulas for the ML degree of the cube and its dual in arbitrary dimension, and discuss scalings that make the former have ML degree one. We focus on reflexive simplices in Section \ref{section6}, and conjecture formulas for their ML degrees. The ML degree of further families of reflexive polytopes is studied in the last two sections. In Section \ref{section7} we look at geometric constructions that create higher dimensional reflexive polytopes from lower dimensional ones, and in Section \ref{section8} we consider reflexive polytopes associated to undirected graphs.

All our computations are available and reproducible at the mathematical research data repository \texttt{MathRepo} of the Max-Planck Institute of Mathematics in the Sciences, with webpage:\begin{center}
    \url{https://mathrepo.mis.mpg.de/LikelihoodReflexive}
\end{center}

\section{Log-Linear Models and Maximum Likelihood Estimation} \label{section2}

In this article we consider a specific class of statistical models known as log-linear models. These arise naturally from an algebraic study of discrete regular exponential families \citep[Chapter 6.2]{AlgebraicStatistics}. Let $[n] \coloneqq \{ 1, 2, \dots, n\}$ be a nonempty finite set. Consider 
\begin{equation*}
\Delta_{n-1} \coloneqq \{ (p_1, \ldots, p_n) \in \mathbb{R}^n \mid p_1, \ldots, p_n \ge 0 \textup{ and } p_1 + \ldots + p_n = 1 \}
\end{equation*}
as the set of probability measures on $[n]$, known as the \emph{probability simplex}. Throughout the article, $A \in \mathbb{Z}^{d \times n}$ is a matrix of the form $A = \begin{bmatrix}
a_1 & a_2 & \ldots & a_n \\
\end{bmatrix}$, where $a_1, \ldots, a_n \in \mathbb{Z}^d$, 
and we let 
\begin{equation*}
A' = \begin{bmatrix}
1 & 1 & \ldots & 1 \\
a_1 & a_2 & \ldots & a_n \\
\end{bmatrix}.
\end{equation*}

\begin{definition}\label{def:loglinear}
Let $A  \in \mathbb{Z}^{d\times n}$. The \emph{log-linear model} associated to $A$ is 
\begin{equation*}
\mathcal{M}_A \coloneqq \{ p \in \Delta_{n-1} \mid \textup{log } p \in \textup{rowspan}(A') \}.
\end{equation*}
The matrix $A$ is called \emph{design matrix}.
\end{definition}

As is customary in algebraic statistics, we added the all-ones vector as a first row in $A'$ to ensure that the uniform distribution is included in the log-linear model $\mathcal{M}_A$. This is additionally motivated by interpreting $\mathcal{M}_A$ as a discrete exponential family, and the associated toric ideal will be homogeneous \cite[Section 6.2]{AlgebraicStatistics}.

From a geometric perspective, we consider a log-linear model as a toric variety intersected with the probability simplex $\Delta_{n-1}$. For this purpose we write $\theta^{a_j} \coloneqq \theta_1^{a_{1j}} \dots \theta_d^{a_{d j}}$ for the monomial in $\theta$ defined by $a_j$ occurring in $A$. For each $c \in (\mathbb{C}^*)^n$ we define the parametrization map
\begin{equation} \label{gl1}
\psi_A^c : (\mathbb{C}^*)^{d+1} \to (\mathbb{C}^*)^n, \quad (s, \theta_1, \ldots, \theta_d) \mapsto (c_1 s \theta^{a_1}, \ldots, c_n s \theta^{a_n}).
\end{equation}

\begin{definition} \label{definition2}
Let $A \in \mathbb{Z}^{d \times n}$ and $c \in (\mathbb{C}^*)^n$. The \emph{scaled toric variety} $V_A^c \subseteq \mathbb{C}^n$ is the Zariski closure of $\textup{im}\, \psi_A^c$. We refer to $c = (1,1,\ldots,1)$ as the \emph{standard scaling}, and $V_A \coloneqq V_A^{(1,1,\ldots,1)}$ is the \emph{toric variety} associated to $A$.
\end{definition}

In this way, each scaling gives rise to a parametrization corresponding to a different embedding of $V_A$. We omit $A$ from the notation whenever it is clear from the context. If we do not specify a scaling explicitly, we mean the standard one.

For log-linear models, the maximum likelihood estimation problem simplifies to the following characterization, known as \emph{Birch's theorem}. We assume that observed \emph{data} is given by $u = (u_1, \ldots, u_n) \in \mathbb{N}^n$, where $u_i$ describes the number of observations of the $i$-th event. Let $u_+ \coloneqq u_1 + \ldots + u_n$ be the associated \emph{sample size}.

\begin{theorem}\cite[Corollary 7.3.9]{AlgebraicStatistics} \label{theorem3}
Let $A \in \mathbb{Z}^{d \times n}$. The \emph{maximum likelihood estimate (MLE)} over the model $\mathcal{M}_A$ for given data $u \in \mathbb{N}^n$ is the unique solution $\hat{p}$, if it exists, to
\begin{equation*}
\frac{1}{u_+} A u = A p \quad \text{and} \quad p \in \mathcal{M}_A.
\end{equation*}
\end{theorem}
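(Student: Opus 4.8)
The plan is to recast the constrained maximization of the log-likelihood as an unconstrained concave problem in the natural parameters, read off the score equations from its stationarity condition, and then secure uniqueness through an information-geometric Pythagorean identity rather than a naive convexity argument.

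First I would write the log-likelihood for the data $u$ as $\ell(p) = \sum_{i=1}^n u_i \log p_i$ (discarding the constant multinomial coefficient), so that the MLE is the maximizer of $\ell$ over $\mathcal{M}_A$. Exploiting the all-ones first row of $A'$, every $p \in \mathcal{M}_A$ can be written as $p_i(\theta) = \exp(\langle a_i, \theta\rangle)/Z(\theta)$ with $Z(\theta) = \sum_{j=1}^n \exp(\langle a_j, \theta\rangle)$ and $\theta \in \mathbb{R}^d$; the coordinate attached to the all-ones row is precisely what renormalizes $p$ into $\Delta_{n-1}$, so membership in the model is automatic. Substituting gives $\ell(\theta) = \langle Au, \theta\rangle - u_+ \log Z(\theta)$, which is concave because $\log Z$ is a log-sum-exp function (equivalently the cumulant generating function of the family, whose Hessian is a covariance matrix and hence positive semidefinite). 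Differentiating, $\nabla_\theta \ell = Au - u_+ \sum_i p_i(\theta)\, a_i = Au - u_+ A p(\theta)$, so the stationarity condition is exactly $A p = \tfrac{1}{u_+} Au$; by concavity any stationary $\hat{p}$ is already a global maximizer.

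The main obstacle is uniqueness: the set $\mathcal{M}_A$ is a curved (toric) slice of the simplex and is \emph{not} convex in the $p$-coordinates, while in the $\theta$-coordinates the objective need only be concave and not strictly so (directions in $\ker A$ leave $p$ unchanged). I would therefore avoid arguing strict concavity directly and instead invoke the exponential-family Pythagorean identity. Writing $\ell(p)/u_+ = -H(u/u_+) - D(u/u_+ \,\|\, p)$ in terms of Shannon entropy $H$ and Kullback--Leibler divergence $D$, maximizing $\ell$ is equivalent to minimizing $D(u/u_+ \,\|\, p)$ over $p \in \mathcal{M}_A$. For any candidate $\hat{p} \in \mathcal{M}_A$ satisfying $A'\hat{p} = A'u/u_+$ (the score equations together with the normalization $\sum_i \hat{p}_i = 1$) and any other $p \in \mathcal{M}_A$, the vector $\log \hat{p} - \log p$ lies in $\textup{rowspan}(A')$; combined with the moment-matching $A'\hat{p} = A'u/u_+$ this yields the exact decomposition $D(u/u_+ \,\|\, p) = D(u/u_+ \,\|\, \hat{p}) + D(\hat{p}\,\|\, p)$. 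Since $D(\hat{p}\,\|\,p) \ge 0$ with equality if and only if $p = \hat{p}$, this simultaneously confirms that any solution of the score equations is the global optimum and that it is unique. Existence is the separately delineated condition that $u/u_+$ lie in the interior of the marginal polytope $\textup{conv}(a_1,\ldots,a_n)$, which is exactly why the statement is phrased ``if it exists''.
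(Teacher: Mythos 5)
Your proof is correct, but note that the paper offers no proof of this statement at all: it is quoted verbatim from Sullivant's \emph{Algebraic Statistics} (Corollary 7.3.9), where the standard argument runs through the general theory of regular exponential families --- strict concavity of the log-likelihood in a minimal parametrization together with the fact that the gradient of the log-partition function maps bijectively onto the relative interior of the polytope of sufficient statistics. Your route is genuinely different and, for this purpose, cleaner. You use only plain (not strict) concavity of $\theta \mapsto \langle Au,\theta\rangle - u_+\log Z(\theta)$ to identify stationarity with the moment-matching condition $Ap = \frac{1}{u_+}Au$, and you then get uniqueness from the Kullback--Leibler Pythagorean identity $D(u/u_+\,\|\,p) = D(u/u_+\,\|\,\hat p) + D(\hat p\,\|\,p)$, which is exactly the right tool because it is insensitive to the non-injectivity of the parametrization $\theta \mapsto p(\theta)$ --- the obstruction you correctly flag as the reason a naive strict-concavity argument in $\theta$ would fail. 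The identity is verified soundly: for $p,\hat p \in \mathcal{M}_A$ one has $\log\hat p - \log p = (A')^T\gamma$ for some $\gamma$, and since $A'\hat p = A'u/u_+$ the cross term $\langle u/u_+ - \hat p,\,(A')^T\gamma\rangle = \langle A'(u/u_+ - \hat p),\,\gamma\rangle$ vanishes; positivity of $p$ on $\mathcal{M}_A$ keeps all divergences finite, and applying the decomposition symmetrically to two candidate solutions forces them to coincide. Two small inaccuracies are worth fixing, though neither harms the logic: the flat directions of the likelihood are not ``$\ker A$'' (which lives in $\mathbb{R}^n$) but the vectors $v \in \mathbb{R}^d$ for which $A^T v$ is a constant vector; and the existence criterion should say that $A u/u_+$ lies in the relative interior of $\textup{conv}(a_1,\ldots,a_n)$, not that $u/u_+$ itself does --- although, since the theorem is stated conditionally on existence, that remark sits outside what needed to be proved.
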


We often work with the parametrization given by $\psi^c_A$ in \eqref{gl1}, so that the MLE $\hat{p}$ is the image of some $\hat{\theta}$. The equations appearing in Theorem \ref{theorem3} are called \emph{score equations}. An essential invariant for the ML estimation problem is given by the maximum likelihood degree \citep{TheMaximumLikelihoodDegree}, defined as follows.

\begin{definition}
Given $A \in \mathbb{Z}^{d \times n}$ and $c\in (C^\ast)^n$, the \emph{ML degree} of $V_A^c$, denoted by $\textup{mldeg}(V_A^c)$, is the number of complex solutions to the score equations for generic data $u \in \mathbb{N}^n$. 
\end{definition}

Our primary interest is in a particular class of log-linear models. In many cases the columns of $A$ are given by lattice points contained in a polytope \citep{MaximumLikelihoodEstimationInLogLinearModels, LikelihoodInferenceInExponentialFamiliesAndDirectionsOfRecession, OnTheGeometryOfDiscreteExponentialFamiliesWithApplicationToExponentialRandomGraphModels}. We refer to \citet{LecturesOnPolytopes} for a detailed introduction to polytopes. A special subclass of polytopes are the reflexive ones. 

\begin{definition}
A lattice polytope is \emph{reflexive} if it contains the origin in its interior and its dual polytope is also a lattice polytope.
\end{definition}

Modeling using reflexive polytopes allows the use of tools from discrete geometry to study the ML degree. In the following we establish the relationship between $A$-discriminants introduced by \citet*[Chapter 9]{DiscriminantsResultantsAndMultidimensionalDeterminants} and associated ML degrees. This idea goes back to the work of the Mathematics Research Communities (MRC) \emph{Likelihood Geometry} group \cite{TheMaximumLikelihoodDegreeOfToricVarieties}. See also \citep{LikelihoodGeometry} for a detailed introduction to aspects of likelihood geometry. 

Let $P \subseteq \mathbb{R}^d$ be a $d$-dimensional reflexive polytope with lattice points $a_1, \ldots, a_n$ defining $A$. From the underlying polytope $P$ we can deduce the following geometric information on $V$. 

\begin{theorem} \citep[Theorem 4.16]{sturmfels1996grobner} \label{theorem6} 
Let $A = [a_1, \ldots, a_n] \in \mathbb{Z}^{d \times n}$. The degree of $V_A$ is the normalized volume $d! \cdot \textup{vol}(P)$ of $P = \textup{conv}(a_1, \ldots, a_n)$.
\end{theorem}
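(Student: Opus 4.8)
The plan is to compute $\deg V_A$ through the Hilbert polynomial of its homogeneous coordinate ring and to identify the leading coefficient with the Euclidean volume of $P$ via Ehrhart's theorem. Viewing $V_A$ as the affine cone over the projective toric variety $X_A \subseteq \mathbb{P}^{n-1}$ cut out by the toric ideal $I_A = \ker(S \to \mathbb{C}[t_0^{\pm 1}, \ldots, t_d^{\pm 1}],\; x_j \mapsto t^{a'_j})$, where $S = \mathbb{C}[x_1, \ldots, x_n]$ and $a'_j$ is the $j$-th column of $A'$, both have the same degree. Since every column of $A'$ has first coordinate $1$, the ideal $I_A$ is homogeneous, and I would first record that $X_A$ has dimension $d$: the image dimension of the monomial parametrization equals $\mathrm{rank}(A') = d+1$ because $P$ is $d$-dimensional. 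Then $\deg X_A$ equals $d!$ times the leading coefficient of the Hilbert polynomial $m \mapsto \dim_{\mathbb{C}}(S/I_A)_m$.

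First I would produce a monomial basis of $(S/I_A)_m$. Two monomials $x^u, x^v$ with $|u| = |v| = m$ are congruent modulo $I_A$ exactly when $A'u = A'v$, so $\dim_{\mathbb{C}}(S/I_A)_m$ equals the number of distinct lattice points $A'u$ with $u \in \mathbb{N}^n$ and $|u| = m$. Because the first row of $A'$ is the all-ones vector, each such point has first coordinate $m$, and this count is precisely $\#\{ b \in \mathbb{N}A' : b_0 = m\}$, the number of degree-$m$ elements of the affine semigroup generated by the columns of $A'$. Next I would compare this with the lattice-point count for the cone: the height-$m$ slice of $\mathrm{cone}(A') \cap \mathbb{Z}^{d+1}$ is in bijection with $mP \cap \mathbb{Z}^d$, which by Ehrhart's theorem is a polynomial in $m$ with leading term $\mathrm{vol}(P)\, m^d$. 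Granting that $\mathbb{N}A'$ and its saturation share the same leading Hilbert coefficient, I conclude $\deg V_A = d!\cdot \mathrm{vol}(P)$, the normalized volume of $P$.

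The main obstacle is exactly that last comparison: controlling the discrepancy between $\mathbb{N}A'$ and the full set of lattice points in the cone it spans, i.e. showing that non-normality of the semigroup ring contributes only to lower-order terms. I expect to handle this by invoking that the normalization morphism is finite and birational, hence an isomorphism away from a subvariety of dimension $\le d-1$, so it cannot change the top-degree term of the Hilbert polynomial; equivalently, the "missing" lattice points at height $m$ cluster near the boundary of $mP$ and number only $O(m^{d-1})$. An alternative route that avoids Ehrhart theory is a Gröbner degeneration in the spirit of \cite{sturmfels1996grobner}: a generic weight degenerates $I_A$ to the Stanley--Reisner ideal of a regular triangulation of $P$, degree is preserved under this flat family, and the degree of the degenerate ring is the sum of the normalized volumes of the maximal simplices, which equals $d!\,\mathrm{vol}(P)$ by additivity of volume over the triangulation.
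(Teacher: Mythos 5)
The paper itself contains no proof of this statement --- it is imported wholesale from \citet[Theorem 4.16]{sturmfels1996grobner} --- so your proposal has to be measured against the cited source rather than anything in the text. Your primary route (identify $\dim_{\mathbb{C}}(S/I_A)_m$ with the number of degree-$m$ elements of the affine semigroup $\mathbb{N}A'$, compare with the Ehrhart polynomial of $P$ via the height-$m$ slice of $\mathrm{cone}(A')$, and absorb the saturation discrepancy into lower-order terms) is the standard semigroup-ring proof and is essentially sound; the module-theoretic version of your closing step is cleanest, since the cokernel of $\mathbb{C}[\mathbb{N}A'] \hookrightarrow \mathbb{C}[\mathrm{cone}(A')\cap \mathbb{Z}A']$ is a finitely generated module supported off the dense torus orbit, hence contributes a Hilbert polynomial of degree at most $d-1$. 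Amusingly, your ``alternative route'' is the one the cited source actually takes: Sturmfels proves the theorem through the Gr\"obner degeneration to a regular triangulation. There your sketch is slightly too quick: for a non-unimodular triangulation $\mathrm{in}_w(I_A)$ is \emph{not} the Stanley--Reisner ideal (only its radical is), and the conclusion ``degree equals the sum of normalized volumes of the maximal simplices'' needs the additional fact that the multiplicity of $\mathrm{in}_w(I_A)$ along the prime of each maximal simplex equals that simplex's normalized volume; this multiplicity analysis, not mere additivity of volume, is the substance of Sturmfels' argument.

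The one genuine gap in your main argument is the word ``birational.'' The normalization of $\mathbb{C}[\mathbb{N}A']$ is $\mathbb{C}[\mathrm{cone}(A')\cap \mathbb{Z}A']$, the saturation inside the lattice $\mathbb{Z}A' \subseteq \mathbb{Z}^{d+1}$ generated by the columns of $A'$, whereas your Ehrhart count enumerates $\mathrm{cone}(A')\cap\mathbb{Z}^{d+1}$. The inclusion of the semigroup ring into the latter is finite of degree $[\mathbb{Z}^{d+1}:\mathbb{Z}A']$ and is birational only when this index is $1$. So as written your argument proves $\deg V_A = d!\cdot\mathrm{vol}(P)$ only under the implicit hypothesis that $a_1,\dots,a_n$ affinely generate $\mathbb{Z}^d$; without it the conclusion is false, e.g.\ $A = [\,0 \;\; 2\,]$ gives $I_A = 0$ and $\deg V_A = 1$ while $d!\cdot\mathrm{vol}(P) = 2$ (in general one obtains the volume normalized to the sublattice $\mathbb{Z}A'$). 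In fairness, the same hypothesis is implicit in the statement as printed, and it holds in all of the paper's applications, where $A$ consists of \emph{all} lattice points of a full-dimensional reflexive polytope containing the origin (for $C_d$ and $\cross_d$, for instance, the columns visibly contain a lattice basis). But a complete proof must make the hypothesis explicit and invoke it exactly at the step where you assert birationality.
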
 

\noindent According to \citep[Theorem 3.2]{LikelihoodGeometry} and \citep[Corollary 8]{TheMaximumLikelihoodDegreeOfToricVarieties}, the ML degree is bounded by the degree:
\begin{equation} \label{gl2}
\textup{mldeg} (V^c) \le \textup{deg} (V) \quad \textup{for all } c \in (\mathbb{C}^*)^n.
\end{equation}
Equality holds for generic $c$. This motivates the following definition.

\begin{definition}
For all $c \in (\mathbb{C}^*)^n$, the \emph{ML degree drop} of $V^c$ is $\textup{mldrop} (V^c) \coloneqq \textup{deg} (V^c) - \textup{mldeg} (V)$.
\end{definition}

\noindent Based on $A \in \mathbb{Z}^{d \times n}$, each scaling $c \in (\mathbb{C}^*)^n$ induces a polynomial $f_c = \sum_{i = 1}^n c_i \theta^{a_i}$.

\begin{definition}
For any matrix $A$ as above, the \emph{variety of scalings} is defined as
\begin{equation*}
\nabla_A \coloneqq \overline{\left \{ c \in (\mathbb{C}^*)^n \mid \exists \theta \in (\mathbb{C}^*)^d \text{ such that } f_c (\theta) = \frac{\partial f_c}{\partial \theta_i} (\theta) = 0 \text{ for all } i \right \}}.
\end{equation*}
If $\nabla_A$ has codimension one in $(\mathbb{C}^*)^n$, then the \emph{$A$-discriminant} $\Delta_A$ is defined to be the irreducible polynomial that vanishes on $\nabla_A$.
\end{definition}

\noindent Define the \emph{principal $A$-determinant} as 
\begin{equation*}
E_A(c) \coloneqq \prod_{\Gamma \text{ face of } P} \Delta_{\Gamma \cap A} (c),
\end{equation*}
where the product is taken over all nonempty faces $\Gamma \subseteq P$ including $P$ itself and $\Gamma \cap A$ is the matrix whose columns correspond to the lattice points contained in $\Gamma$. This polynomial is an important object to decide on an ML degree drop. See \citep{TheMaximumLikelihoodDegree, LikelihoodGeometry} for more information on the relationship between singularities and ML degree drops.

\begin{theorem}\label{thm:Adet} \cite[Theorem 2]{TheMaximumLikelihoodDegreeOfToricVarieties}
Let $c \in (\mathbb{C}^*)^n$ be a fixed scaling and consider the scaled toric variety $V^c$. Then $\textup{mldeg}(V^c) < \textup{deg} (V)$ if and only if $E_A (c) = 0$.
\end{theorem}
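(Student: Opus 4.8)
The plan is to reinterpret both sides of the equivalence through the Euler characteristic of a hypersurface complement in the torus, and then to feed this into the combinatorial formula for that Euler characteristic in terms of the Newton polytope. First I would rewrite the score equations in the parametrization $\psi_A^c$: setting $p_i = c_i\theta^{a_i}/f_c(\theta)$ (so that the normalization $\sum_i p_i = 1$ is automatic via $s = 1/f_c(\theta)$), the log-likelihood becomes, up to an additive constant, $\ell(\theta) = \sum_i u_i \langle a_i, \log\theta\rangle - u_+ \log f_c(\theta)$. Its critical points in $(\mathbb{C}^*)^d$ are exactly the solutions of the score equations whose images parametrize $V^c$, so for generic $u$ the quantity $\textup{mldeg}(V^c)$ equals the number of critical points of this master function on the smooth very affine variety $U_c \coloneqq (\mathbb{C}^*)^d \setminus \{f_c = 0\}$.

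The next step is to invoke Huh's theorem (building on Varchenko and Orlik--Terao) that the number of critical points of a generic master function on a smooth very affine variety $U$ equals $(-1)^{\dim U}\chi(U)$; since the exponents $(Au)_k$ and $u_+$ are generic, this applies and yields $\textup{mldeg}(V^c) = (-1)^d \chi(U_c)$. By additivity of the Euler characteristic together with $\chi((\mathbb{C}^*)^d) = 0$, I obtain $\chi(U_c) = -\chi(Z_c)$, where $Z_c \coloneqq \{f_c = 0\} \cap (\mathbb{C}^*)^d$. Hence $\textup{mldeg}(V^c) = |\chi(Z_c)|$, and the entire problem is reduced to controlling the Euler characteristic of the toric hypersurface cut out by $f_c$.

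I would then use the Kouchnirenko--Bernstein--Khovanskii description of $\chi(Z_c)$. When $f_c$ is \emph{nondegenerate} with respect to its Newton polytope $P$ --- meaning $f_c$ and each of its face-restrictions $f_\Gamma$ have no critical points in their respective tori --- one has $|\chi(Z_c)| = d!\,\textup{vol}(P)$, which by Theorem \ref{theorem6} equals $\textup{deg}(V)$, so no drop occurs. The bridge to the principal $A$-determinant is the defining property of $E_A$: the factor $\Delta_{\Gamma\cap A}(c)$ vanishes precisely when the face polynomial $f_\Gamma$ acquires a critical point in the torus of $\Gamma$, which is exactly the condition defining $\nabla_A$ and $\Delta_A$ applied to each face. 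Therefore $E_A(c)\neq 0$ is equivalent to nondegeneracy of $f_c$, which gives the backward implication: $E_A(c)\neq 0 \Rightarrow \textup{mldeg}(V^c) = \textup{deg}(V)$.

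The main obstacle is the forward implication, that degeneracy forces a \emph{strict} drop $|\chi(Z_c)| < d!\,\textup{vol}(P)$. For this I would stratify according to which face discriminants vanish and argue that each vanishing factor removes contributions to the Euler characteristic --- either because a toric-boundary stratum is met nontransversally (roots escaping to the boundary, as in the one-dimensional interval where a vanishing leading or trailing coefficient drops the root count by exactly one) or because $\{f_c = 0\}$ acquires an isolated singularity, both of which strictly lower $|\chi(Z_c)|$. Making this strict and uniform across all faces is the delicate point; the cleanest route is to use that $E_A$ is the full GKZ principal $A$-determinant, whose vanishing locus coincides with the non-smoothness/non-transversality locus of the pair consisting of the toric variety $X_P$ and the divisor $\{f_c = 0\}$, and then to compare the face-by-face Euler characteristic computation against the nondegenerate reference value $d!\,\textup{vol}(P)$. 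This reduces the theorem to the established toric-Euler-characteristic dictionary of GKZ, which is where the genuine work lies.
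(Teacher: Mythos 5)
You should first note that the paper does not prove this statement at all: Theorem \ref{thm:Adet} is quoted verbatim from \cite{TheMaximumLikelihoodDegreeOfToricVarieties}, so the relevant comparison is with that source's argument. The backward implication in your proposal is essentially correct and well known: substituting $s = 1/f_c(\theta)$ turns the score equations into the critical equations of the master function $\theta^{Au} f_c^{-u_+}$ on the smooth very affine variety $U_c = (\mathbb{C}^*)^d \setminus \{f_c = 0\}$; the exponent vector $(Au, -u_+)$ is generic because $A'$ has rank $d+1$, so Huh's theorem gives $\textup{mldeg}(V^c) = (-1)^d \chi(U_c) = (-1)^{d-1}\chi(Z_c)$; and when $E_A(c) \neq 0$, the GKZ characterization says $f_c$ is nondegenerate with respect to every face of $P$, so Khovanskii's formula gives $(-1)^{d-1}\chi(Z_c) = d!\,\textup{vol}(P) = \textup{deg}(V)$.

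The genuine gap is the forward implication, and your own text concedes it: the claim that degeneracy forces $(-1)^{d-1}\chi(Z_c) < d!\,\textup{vol}(P)$ strictly is \emph{not} part of any ``established toric-Euler-characteristic dictionary.'' GKZ characterize where $E_A$ vanishes but give no Euler-characteristic formula or inequality in the degenerate case, so your reduction terminates in a statement essentially equivalent to the theorem itself. Your supporting heuristics also do not apply here: the one-dimensional picture of a vanishing leading or trailing coefficient cannot occur, since $c \in (\mathbb{C}^*)^n$ means every lattice point of $P$ genuinely appears in $f_c$; and ``an isolated singularity strictly lowers $|\chi(Z_c)|$'' is precisely what must be proved, not an input. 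The cited source closes this half differently, avoiding $\chi$ altogether: after clearing denominators the score equations become the sparse system $u_+\,\theta_k \partial f_c/\partial \theta_k - (Au)_k f_c = 0$ for $k \in [d]$, each equation with Newton polytope contained in $P$, and Bernstein's second theorem says the torus root count falls strictly below $d!\,\textup{vol}(P)$ exactly when some facial system has a root in the torus. For a proper face $\Gamma \subseteq \{x : \langle w, x\rangle = m\}$, the quasi-homogeneity relation $\sum_k w_k \theta_k \partial_k f_\Gamma = m f_\Gamma$ combined with genericity of $u$ (so that $\langle w, Au\rangle \neq m\,u_+$) forces any facial root to satisfy $f_\Gamma = \theta_k \partial_k f_\Gamma = 0$ for all $k$, i.e.\ $\Delta_{\Gamma \cap A}(c) = 0$, and conversely any such singular point solves the facial system; the face $\Gamma = P$ is handled separately, since a torus singular point of $f_c$ solves the cleared system for every $u$ but lies on $Z_c$, outside the likelihood domain, so it absorbs solutions and again forces a strict drop. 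Summing over all faces recovers exactly the factorization of $E_A(c)$. To salvage your Euler-characteristic framing you would have to prove the strict inequality for degenerate $f_c$ directly --- and the natural way to do that is the Bernstein argument just described, at which point the $\chi$ machinery becomes superfluous.
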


\section{Reflexive Polygons} \label{section3}

Reflexive polygons are classified by 16 isomorphism classes  \citep[Proposition 4.1]{GorensteinToricFanoVarieties} shown in Figure \ref{figure1}. Maximum likelihood estimates of the associated models have been studied in \cite{MaximumLikelihoodEstimationOfToricFanoVarieties}. We make a correction to \citep[Theorem 3.1]{MaximumLikelihoodEstimationOfToricFanoVarieties}, by clarifying that not only the reflexive polygon $P_{5\textup{a}}$ exhibits an ML degree drop (its degree is 5 but its ML degree is 3), but also the 
reflexive polygon $P_{8\textup{a}}$ exhibits a drop. The other 14 polygons do not. Our computations using \texttt{Macaulay2} \citep{Macaulay2} and the \texttt{AlgebraicOptimization.m2} package \citep{AlgebraicOptimizationDegree} can be verified in the \texttt{MathRepo} page.

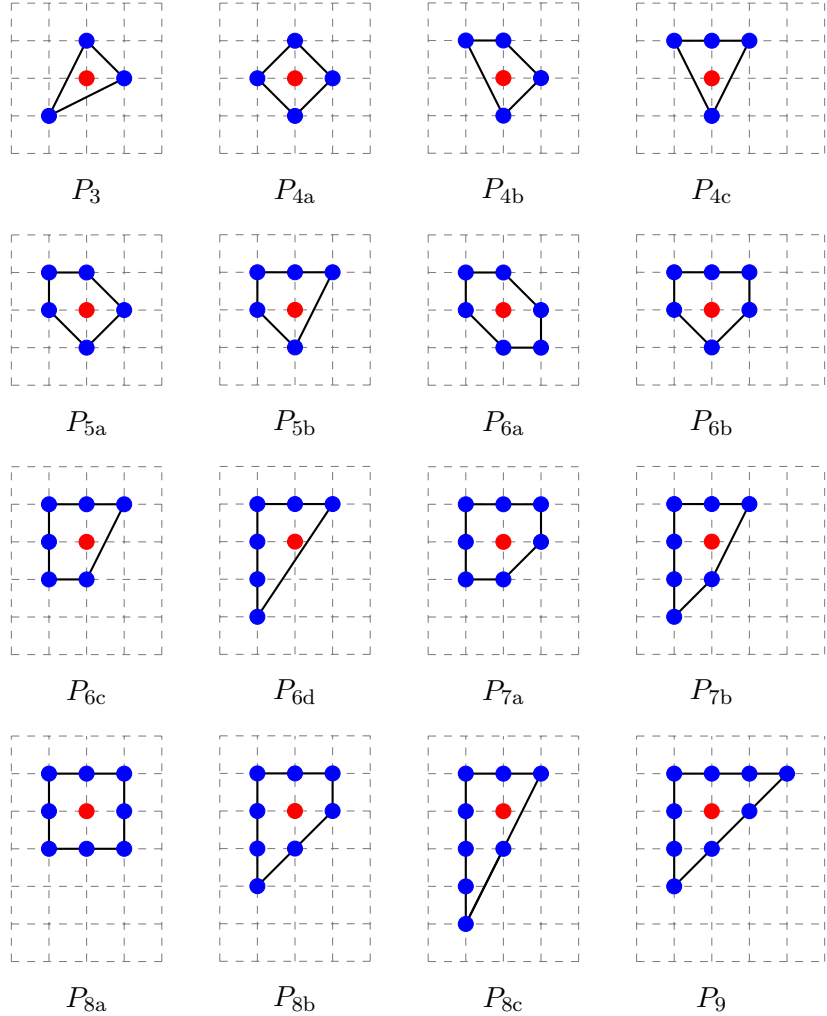
\begin{figure}[h!]
\centering
\begin{tikzpicture} 
\draw [step=0.5,gray, dashed] (0,0) grid (2.0,2.0);

\draw[thick] (0.5, 0.5) -- (1.0, 1.5) {};
\draw[thick] (0.5, 0.5) -- (1.5, 1.0) {};
\draw[thick] (1.5, 1.0) -- (1.0, 1.5) {};

\node[draw, circle, inner sep=2pt, fill, blue] at (0.5, 0.5) {};
\node[draw, circle, inner sep=2pt, fill, blue] at (1.0, 1.5) {};
\node[draw, circle, inner sep=2pt, fill, blue] at (1.5, 1.0) {};
\node[draw, circle, inner sep=2pt, fill, red] at (1.0, 1.0) {};
\node at (1.0, -0.5) {$P_3$};
\end{tikzpicture}
\hspace{0.5cm}
\begin{tikzpicture} 
\draw [step=0.5,gray, dashed] (0,0) grid (2.0,2.0);

\draw[thick] (0.5, 1.0) -- (1.0, 1.5) {};
\draw[thick] (1.0, 0.5) -- (1.5, 1.0) {};
\draw[thick] (1.5, 1.0) -- (1.0, 1.5) {};
\draw[thick] (0.5, 1.0) -- (1.0, 0.5) {};

\node[draw, circle, inner sep=2pt, fill, blue] at (1.0, 0.5) {};
\node[draw, circle, inner sep=2pt, fill, blue] at (1.0, 1.5) {};
\node[draw, circle, inner sep=2pt, fill, blue] at (1.5, 1.0) {};
\node[draw, circle, inner sep=2pt, fill, blue] at (0.5, 1.0) {};
\node[draw, circle, inner sep=2pt, fill, red] at (1.0, 1.0) {};
\node at (1.0, -0.5) {$P_{4\textup{a}}$};
\end{tikzpicture}
\hspace{0.5cm}
\begin{tikzpicture} 
\draw [step=0.5,gray, dashed] (0,0) grid (2.0,2.0);

\draw[thick] (0.5, 1.5) -- (1.0, 1.5) {};
\draw[thick] (1.0, 0.5) -- (1.5, 1.0) {};
\draw[thick] (1.5, 1.0) -- (1.0, 1.5) {};
\draw[thick] (0.5, 1.5) -- (1.0, 0.5) {};

\node[draw, circle, inner sep=2pt, fill, blue] at (1.0, 0.5) {};
\node[draw, circle, inner sep=2pt, fill, blue] at (1.0, 1.5) {};
\node[draw, circle, inner sep=2pt, fill, blue] at (1.5, 1.0) {};
\node[draw, circle, inner sep=2pt, fill, blue] at (0.5, 1.5) {};
\node[draw, circle, inner sep=2pt, fill, red] at (1.0, 1.0) {};
\node at (1.0, -0.5) {$P_{4\textup{b}}$};
\end{tikzpicture}
\hspace{0.5cm}
\begin{tikzpicture} 
\draw [step=0.5,gray, dashed] (0,0) grid (2.5,2.0);

\draw[thick] (0.5, 1.5) -- (1.0, 1.5) {};
\draw[thick] (1.0, 0.5) -- (1.5, 1.5) {};
\draw[thick] (1.5, 1.5) -- (1.0, 1.5) {};
\draw[thick] (0.5, 1.5) -- (1.0, 0.5) {};

\node[draw, circle, inner sep=2pt, fill, blue] at (1.0, 0.5) {};
\node[draw, circle, inner sep=2pt, fill, blue] at (1.0, 1.5) {};
\node[draw, circle, inner sep=2pt, fill, blue] at (1.5, 1.5) {};
\node[draw, circle, inner sep=2pt, fill, blue] at (0.5, 1.5) {};
\node[draw, circle, inner sep=2pt, fill, red] at (1.0, 1.0) {};
\node at (1.0, -0.5) {$P_{4\textup{c}}$};
\end{tikzpicture}

\vspace{0.25cm}

\begin{tikzpicture} 
\draw [step=0.5,gray, dashed] (0,0) grid (2.0,2.0);

\draw[thick] (0.5, 1.5) -- (1.0, 1.5) {};
\draw[thick] (1.0, 0.5) -- (1.5, 1.0) {};
\draw[thick] (1.5, 1.0) -- (1.0, 1.5) {};
\draw[thick] (0.5, 1.0) -- (1.0, 0.5) {};
\draw[thick] (0.5, 1.5) -- (0.5, 1.0) {};

\node[draw, circle, inner sep=2pt, fill, blue] at (1.0, 0.5) {};
\node[draw, circle, inner sep=2pt, fill, blue] at (1.0, 1.5) {};
\node[draw, circle, inner sep=2pt, fill, blue] at (1.5, 1.0) {};
\node[draw, circle, inner sep=2pt, fill, blue] at (0.5, 1.5) {};
\node[draw, circle, inner sep=2pt, fill, blue] at (0.5, 1.0) {};
\node[draw, circle, inner sep=2pt, fill, red] at (1.0, 1.0) {};
\node at (1.0, -0.5) {$P_{5\textup{a}}$};
\end{tikzpicture}
\hspace{0.5cm}
\begin{tikzpicture} 
\draw [step=0.5,gray, dashed] (0,0) grid (2.0,2.0);

\draw[thick] (0.5, 1.5) -- (1.0, 1.5) {};
\draw[thick] (1.0, 0.5) -- (1.5, 1.5) {};
\draw[thick] (1.5, 1.5) -- (1.0, 1.5) {};
\draw[thick] (0.5, 1.0) -- (1.0, 0.5) {};
\draw[thick] (0.5, 1.5) -- (0.5, 1.0) {};

\node[draw, circle, inner sep=2pt, fill, blue] at (1.0, 0.5) {};
\node[draw, circle, inner sep=2pt, fill, blue] at (1.0, 1.5) {};
\node[draw, circle, inner sep=2pt, fill, blue] at (1.5, 1.5) {};
\node[draw, circle, inner sep=2pt, fill, blue] at (0.5, 1.5) {};
\node[draw, circle, inner sep=2pt, fill, blue] at (0.5, 1.0) {};
\node[draw, circle, inner sep=2pt, fill, red] at (1.0, 1.0) {};
\node at (1.0, -0.5) {$P_{5\textup{b}}$};
\end{tikzpicture}
\hspace{0.5cm}
\begin{tikzpicture} 
\draw [step=0.5,gray, dashed] (0,0) grid (2.0,2.0);

\draw[thick] (0.5, 1.5) -- (1.0, 1.5) {};
\draw[thick] (1.0, 0.5) -- (1.5, 0.5) {};
\draw[thick] (1.5, 1.0) -- (1.0, 1.5) {};
\draw[thick] (0.5, 1.0) -- (1.0, 0.5) {};
\draw[thick] (0.5, 1.5) -- (0.5, 1.0) {};
\draw[thick] (1.5, 0.5) -- (1.5, 1.0) {};

\node[draw, circle, inner sep=2pt, fill, blue] at (1.0, 0.5) {};
\node[draw, circle, inner sep=2pt, fill, blue] at (1.0, 1.5) {};
\node[draw, circle, inner sep=2pt, fill, blue] at (1.5, 1.0) {};
\node[draw, circle, inner sep=2pt, fill, blue] at (0.5, 1.5) {};
\node[draw, circle, inner sep=2pt, fill, blue] at (0.5, 1.0) {};
\node[draw, circle, inner sep=2pt, fill, blue] at (1.5, 0.5) {};
\node[draw, circle, inner sep=2pt, fill, red] at (1.0, 1.0) {};
\node at (1.0, -0.5) {$P_{6\textup{a}}$};
\end{tikzpicture}
\hspace{0.5cm}
\begin{tikzpicture} 
\draw [step=0.5,gray, dashed] (0,0) grid (2.5,2.0);

\draw[thick] (0.5, 1.5) -- (1.0, 1.5) {};
\draw[thick] (1.0, 0.5) -- (1.5, 1.0) {};
\draw[thick] (1.5, 1.5) -- (1.0, 1.5) {};
\draw[thick] (0.5, 1.0) -- (1.0, 0.5) {};
\draw[thick] (0.5, 1.5) -- (0.5, 1.0) {};
\draw[thick] (1.5, 1.5) -- (1.5, 1.0) {};

\node[draw, circle, inner sep=2pt, fill, blue] at (1.0, 0.5) {};
\node[draw, circle, inner sep=2pt, fill, blue] at (1.0, 1.5) {};
\node[draw, circle, inner sep=2pt, fill, blue] at (1.5, 1.0) {};
\node[draw, circle, inner sep=2pt, fill, blue] at (0.5, 1.5) {};
\node[draw, circle, inner sep=2pt, fill, blue] at (0.5, 1.0) {};
\node[draw, circle, inner sep=2pt, fill, blue] at (1.5, 1.5) {};
\node[draw, circle, inner sep=2pt, fill, red] at (1.0, 1.0) {};
\node at (1.0, -0.5) {$P_{6\textup{b}}$};
\end{tikzpicture}

\vspace{0.25cm}

\begin{tikzpicture} 
\draw [step=0.5,gray, dashed] (0,-0.5) grid (2.0,2.0);

\draw[thick] (0.5, 1.5) -- (1.0, 1.5) {};
\draw[thick] (1.0, 0.5) -- (0.5, 0.5) {};
\draw[thick] (1.5, 1.5) -- (1.0, 1.5) {};
\draw[thick] (0.5, 1.0) -- (0.5, 0.5) {};
\draw[thick] (0.5, 1.5) -- (0.5, 1.0) {};
\draw[thick] (1.5, 1.5) -- (1.0, 0.5) {};

\node[draw, circle, inner sep=2pt, fill, blue] at (1.0, 0.5) {};
\node[draw, circle, inner sep=2pt, fill, blue] at (1.0, 1.5) {};
\node[draw, circle, inner sep=2pt, fill, blue] at (0.5, 0.5) {};
\node[draw, circle, inner sep=2pt, fill, blue] at (0.5, 1.5) {};
\node[draw, circle, inner sep=2pt, fill, blue] at (0.5, 1.0) {};
\node[draw, circle, inner sep=2pt, fill, blue] at (1.5, 1.5) {};
\node[draw, circle, inner sep=2pt, fill, red] at (1.0, 1.0) {};
\node at (1.0, -1.0) {$P_{6\textup{c}}$};
\end{tikzpicture}
\hspace{0.5cm}
\begin{tikzpicture} 
\draw [step=0.5,gray, dashed] (0,-0.5) grid (2.0,2.0);

\draw[thick] (0.5, 1.5) -- (1.0, 1.5) {};
\draw[thick] (0.5, 0.0) -- (0.5, 0.5) {};
\draw[thick] (1.5, 1.5) -- (1.0, 1.5) {};
\draw[thick] (0.5, 1.0) -- (0.5, 0.5) {};
\draw[thick] (0.5, 1.5) -- (0.5, 1.0) {};
\draw[thick] (1.5, 1.5) -- (0.5, 0.0) {};

\node[draw, circle, inner sep=2pt, fill, blue] at (0.5, 0.0) {};
\node[draw, circle, inner sep=2pt, fill, blue] at (1.0, 1.5) {};
\node[draw, circle, inner sep=2pt, fill, blue] at (0.5, 0.5) {};
\node[draw, circle, inner sep=2pt, fill, blue] at (0.5, 1.5) {};
\node[draw, circle, inner sep=2pt, fill, blue] at (0.5, 1.0) {};
\node[draw, circle, inner sep=2pt, fill, blue] at (1.5, 1.5) {};
\node[draw, circle, inner sep=2pt, fill, red] at (1.0, 1.0) {};
\node at (1.0, -1.0) {$P_{6\textup{d}}$};
\end{tikzpicture}
\hspace{0.5cm}
\begin{tikzpicture} 
\draw [step=0.5,gray, dashed] (0,-0.5) grid (2.0,2.0);

\draw[thick] (0.5, 1.5) -- (1.0, 1.5) {};
\draw[thick] (1.0, 0.5) -- (0.5, 0.5) {};
\draw[thick] (1.5, 1.5) -- (1.0, 1.5) {};
\draw[thick] (0.5, 1.0) -- (0.5, 0.5) {};
\draw[thick] (0.5, 1.5) -- (0.5, 1.0) {};
\draw[thick] (1.5, 1.5) -- (1.5, 1.0) {};
\draw[thick] (1.0, 0.5) -- (1.5, 1.0) {};

\node[draw, circle, inner sep=2pt, fill, blue] at (1.0, 0.5) {};
\node[draw, circle, inner sep=2pt, fill, blue] at (1.0, 1.5) {};
\node[draw, circle, inner sep=2pt, fill, blue] at (0.5, 0.5) {};
\node[draw, circle, inner sep=2pt, fill, blue] at (0.5, 1.5) {};
\node[draw, circle, inner sep=2pt, fill, blue] at (0.5, 1.0) {};
\node[draw, circle, inner sep=2pt, fill, blue] at (1.5, 1.5) {};
\node[draw, circle, inner sep=2pt, fill, blue] at (1.5, 1.0) {};
\node[draw, circle, inner sep=2pt, fill, red] at (1.0, 1.0) {};
\node at (1.0, -1.0) {$P_{7\textup{a}}$};
\end{tikzpicture}
\hspace{0.5cm}
\begin{tikzpicture} 
\draw [step=0.5,gray, dashed] (0,-0.5) grid (2.5,2.0);

\draw[thick] (0.5, 1.5) -- (1.0, 1.5) {};
\draw[thick] (0.5, 0.0) -- (0.5, 0.5) {};
\draw[thick] (1.5, 1.5) -- (1.0, 1.5) {};
\draw[thick] (0.5, 1.0) -- (0.5, 0.5) {};
\draw[thick] (0.5, 1.5) -- (0.5, 1.0) {};
\draw[thick] (1.5, 1.5) -- (1.0, 0.5) {};
\draw[thick] (1.0, 0.5) -- (0.5, 0.0) {};

\node[draw, circle, inner sep=2pt, fill, blue] at (1.0, 0.5) {};
\node[draw, circle, inner sep=2pt, fill, blue] at (1.0, 1.5) {};
\node[draw, circle, inner sep=2pt, fill, blue] at (0.5, 0.5) {};
\node[draw, circle, inner sep=2pt, fill, blue] at (0.5, 1.5) {};
\node[draw, circle, inner sep=2pt, fill, blue] at (0.5, 1.0) {};
\node[draw, circle, inner sep=2pt, fill, blue] at (1.5, 1.5) {};
\node[draw, circle, inner sep=2pt, fill, blue] at (0.5, 0.0) {};
\node[draw, circle, inner sep=2pt, fill, red] at (1.0, 1.0) {};
\node at (1.0, -1.0) {$P_{7\textup{b}}$};
\end{tikzpicture}

\vspace{0.25cm}

\begin{tikzpicture} 
\draw [step=0.5,gray, dashed] (0,-1.0) grid (2.0,2.0);

\draw[thick] (0.5, 1.5) -- (1.0, 1.5) {};
\draw[thick] (1.0, 0.5) -- (0.5, 0.5) {};
\draw[thick] (1.5, 1.5) -- (1.0, 1.5) {};
\draw[thick] (0.5, 1.0) -- (0.5, 0.5) {};
\draw[thick] (0.5, 1.5) -- (0.5, 1.0) {};
\draw[thick] (1.5, 1.5) -- (1.5, 0.5) {};
\draw[thick] (1.0, 0.5) -- (1.5, 0.5) {};

\node[draw, circle, inner sep=2pt, fill, blue] at (1.0, 0.5) {};
\node[draw, circle, inner sep=2pt, fill, blue] at (1.0, 1.5) {};
\node[draw, circle, inner sep=2pt, fill, blue] at (0.5, 0.5) {};
\node[draw, circle, inner sep=2pt, fill, blue] at (0.5, 1.5) {};
\node[draw, circle, inner sep=2pt, fill, blue] at (0.5, 1.0) {};
\node[draw, circle, inner sep=2pt, fill, blue] at (1.5, 1.5) {};
\node[draw, circle, inner sep=2pt, fill, blue] at (1.5, 0.5) {};
\node[draw, circle, inner sep=2pt, fill, blue] at (1.5, 1.0) {};
\node[draw, circle, inner sep=2pt, fill, red] at (1.0, 1.0) {};
\node at (1.0, -1.5) {$P_{8\textup{a}}$};
\end{tikzpicture}
\hspace{0.5cm}
\begin{tikzpicture} 
\draw [step=0.5,gray, dashed] (0,-1.0) grid (2.0,2.0);

\draw[thick] (0.5, 1.5) -- (1.0, 1.5) {};
\draw[thick] (0.5, 0.0) -- (0.5, 0.5) {};
\draw[thick] (1.5, 1.5) -- (1.0, 1.5) {};
\draw[thick] (0.5, 1.0) -- (0.5, 0.5) {};
\draw[thick] (0.5, 1.5) -- (0.5, 1.0) {};
\draw[thick] (1.5, 1.5) -- (1.5, 1.0) {};
\draw[thick] (1.0, 0.5) -- (0.5, 0.0) {};
\draw[thick] (1.0, 0.5) -- (1.5, 1.0) {};

\node[draw, circle, inner sep=2pt, fill, blue] at (1.0, 0.5) {};
\node[draw, circle, inner sep=2pt, fill, blue] at (1.0, 1.5) {};
\node[draw, circle, inner sep=2pt, fill, blue] at (0.5, 0.5) {};
\node[draw, circle, inner sep=2pt, fill, blue] at (0.5, 1.5) {};
\node[draw, circle, inner sep=2pt, fill, blue] at (0.5, 1.0) {};
\node[draw, circle, inner sep=2pt, fill, blue] at (1.5, 1.5) {};
\node[draw, circle, inner sep=2pt, fill, blue] at (0.5, 0.0) {};
\node[draw, circle, inner sep=2pt, fill, blue] at (1.5, 1.0) {};
\node[draw, circle, inner sep=2pt, fill, red] at (1.0, 1.0) {};
\node at (1.0, -1.5) {$P_{8\textup{b}}$};
\end{tikzpicture}
\hspace{0.5cm}
\begin{tikzpicture} 
\draw [step=0.5,gray, dashed] (0,-1.0) grid (2.0,2.0);

\draw[thick] (0.5, 1.5) -- (1.0, 1.5) {};
\draw[thick] (0.5, 0.0) -- (0.5, 0.5) {};
\draw[thick] (1.5, 1.5) -- (1.0, 1.5) {};
\draw[thick] (0.5, 1.0) -- (0.5, 0.5) {};
\draw[thick] (0.5, 1.5) -- (0.5, 1.0) {};
\draw[thick] (1.5, 1.5) -- (1.0, 0.5) {};
\draw[thick] (0.5, -0.5) -- (0.5, 0.0) {};
\draw[thick] (1.0, 0.5) -- (0.5, -0.5) {};
\draw[thick] (1.0, 0.5) -- (0.5, -0.5) {};

\node[draw, circle, inner sep=2pt, fill, blue] at (1.0, 0.5) {};
\node[draw, circle, inner sep=2pt, fill, blue] at (1.0, 1.5) {};
\node[draw, circle, inner sep=2pt, fill, blue] at (0.5, 0.5) {};
\node[draw, circle, inner sep=2pt, fill, blue] at (0.5, 1.5) {};
\node[draw, circle, inner sep=2pt, fill, blue] at (0.5, 1.0) {};
\node[draw, circle, inner sep=2pt, fill, blue] at (1.5, 1.5) {};
\node[draw, circle, inner sep=2pt, fill, blue] at (0.5, 0.0) {};
\node[draw, circle, inner sep=2pt, fill, blue] at (0.5, -0.5) {};
\node[draw, circle, inner sep=2pt, fill, red] at (1.0, 1.0) {};
\node at (1.0, -1.5) {$P_{8\textup{c}}$};
\end{tikzpicture}
\hspace{0.5cm}
\begin{tikzpicture} 
\draw [step=0.5,gray, dashed] (0,-1.0) grid (2.5,2.0);

\draw[thick] (0.5, 1.5) -- (1.0, 1.5) {};
\draw[thick] (0.5, 0.0) -- (0.5, 0.5) {};
\draw[thick] (1.5, 1.5) -- (1.0, 1.5) {};
\draw[thick] (0.5, 1.0) -- (0.5, 0.5) {};
\draw[thick] (0.5, 1.5) -- (0.5, 1.0) {};
\draw[thick] (1.5, 1.5) -- (2.0, 1.5) {};
\draw[thick] (1.0, 0.5) -- (2.0, 1.5) {};
\draw[thick] (1.0, 0.5) -- (0.5, 0.0) {};

\node[draw, circle, inner sep=2pt, fill, blue] at (1.0, 0.5) {};
\node[draw, circle, inner sep=2pt, fill, blue] at (1.0, 1.5) {};
\node[draw, circle, inner sep=2pt, fill, blue] at (0.5, 0.5) {};
\node[draw, circle, inner sep=2pt, fill, blue] at (0.5, 1.5) {};
\node[draw, circle, inner sep=2pt, fill, blue] at (0.5, 1.0) {};
\node[draw, circle, inner sep=2pt, fill, blue] at (1.5, 1.5) {};
\node[draw, circle, inner sep=2pt, fill, blue] at (0.5, 0.0) {};
\node[draw, circle, inner sep=2pt, fill, blue] at (2.0, 1.5) {};
\node[draw, circle, inner sep=2pt, fill, blue] at (1.5, 1.0) {};
\node[draw, circle, inner sep=2pt, fill, red] at (1.0, 1.0) {};
\node at (1.0, -1.5) {$P_{9}$};
\end{tikzpicture}

\caption{The 16 isomorphism classes of reflexive polygons. Each label indicates the name of the corresponding polygon, with a label number that counts the lattice points on the boundary.} \label{figure1}
\end{figure}

\begin{proposition} \label{proposition10}
The log-linear model defined by $P_{8\textup{a}}$ exhibits an ML degree drop. Concretely, $\textup{deg}(V) = 8$ and $\textup{mldeg}(V) = 4$.
\end{proposition}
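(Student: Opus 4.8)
The plan is to recognize $P_{8\mathrm{a}}$ as the square $[-1,1]^2$ and exploit its product structure. After centering at the unique interior lattice point, the eight boundary points together with the origin are exactly $\{-1,0,1\}^2$, so $P_{8\mathrm{a}}=[-1,1]\times[-1,1]$ and the design matrix $A$ has the nine columns $(i,j)$ with $i,j\in\{-1,0,1\}$, the two rows reading off the $i$- and $j$-coordinates. The claim $\deg(V)=8$ is then immediate from Theorem \ref{theorem6}, since the normalized volume is $2!\cdot\mathrm{vol}([-1,1]^2)=2\cdot 4=8$.

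First I would establish that the model decouples. Writing $Z(\theta)=\sum_{i,j}\theta_1^i\theta_2^j=(\theta_1^{-1}+1+\theta_1)(\theta_2^{-1}+1+\theta_2)$, the parametrization \eqref{gl1} at the standard scaling gives $p_{ij}=q_i r_j$ with $q_i=\theta_1^i/(\theta_1^{-1}+1+\theta_1)$ and $r_j=\theta_2^j/(\theta_2^{-1}+1+\theta_2)$. Hence every $p\in\mathcal{M}_A$ factors as a product of two marginals, each lying on the log-linear curve of the segment $[-1,1]$ with lattice points $\{-1,0,1\}$. Because the rows of $A$ are the coordinate functions, the sufficient statistics in Theorem \ref{theorem3} depend only on the row margins $u_{i+}$ and the column margins $u_{+j}$ respectively, and the log-likelihood splits additively as $\sum_i u_{i+}\log q_i+\sum_j u_{+j}\log r_j$. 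The score equations therefore separate into two independent systems, one for $\theta_1$ driven by $(u_{i+})_i$ and one for $\theta_2$ driven by $(u_{+j})_j$.

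Next I would compute the ML degree of a single segment factor. For $A_0=[-1,\,0,\,1]$ the score equations are $p_{-1}+p_0+p_1=1$, $\;p_1-p_{-1}=(u_1-u_{-1})/u_+$, together with the model equation $p_{-1}p_1=p_0^2$; eliminating the linear constraints yields a single quadratic in $p_0$ with two distinct complex roots for generic data, so the segment has ML degree $2$. (Note that the familiar drop to ML degree one happens only for the Hardy--Weinberg scaling $(1,2,1)$, not for the standard scaling considered here, which is why the segment shows no drop while the square does.) Since the Segre-type map $(q,r)\mapsto(q_i r_j)$ is injective, the complex solutions of the coupled system correspond bijectively to pairs of solutions of the two segment systems, and for generic $u$ the induced margins are generic. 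Consequently $\mathrm{mldeg}(V)=2\cdot 2=4<8$, proving both the value and the drop.

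The main obstacle will be making the multiplicativity of the complex solution count fully rigorous: one must verify that nothing is gained or lost when passing between the coupled system and the product of the two segment systems, i.e. that the decoupling is a genuine bijection of solution sets for generic data with no spurious points at the boundary of the torus or at infinity, and that the margins $(u_{i+})$ and $(u_{+j})$ inherit the needed genericity from $u$. Once this bookkeeping is in place the argument reduces entirely to the elementary segment computation above; alternatively, the value $\mathrm{mldeg}(V)=4$ can be confirmed directly with the \texttt{Macaulay2} computation recorded on the \texttt{MathRepo} page.
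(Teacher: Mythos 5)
Your proof is correct, but it follows a genuinely different route from the paper's. The paper proves Proposition \ref{proposition10} through the principal $A$-determinant: it checks that vertices and the four edges of the square contribute no singularities at the standard scaling (each edge reduces to a univariate quadratic with nonvanishing discriminant), then factors the full polynomial as $f = (\theta_1^2+\theta_1+1)(\theta_2^2+\theta_2+1)$, exhibits its four singular points, and concludes $\textup{mldeg}(V) = 8-4 = 4$, with the drop itself certified by Theorem \ref{thm:Adet}. You instead recognize $P_{8\textup{a}}$ as $C_2=[-1,1]^2$ and invoke multiplicativity of the ML degree over Cartesian products together with the ML degree $2$ of the segment --- which is precisely the machinery the paper develops later (Corollary \ref{cor:product} and Proposition \ref{prop:basecase} with $t=1$) to prove Theorem \ref{thm:mldegcube}; indeed the paper itself remarks that $P_{8\textup{a}}$ is the square when it uses Proposition \ref{proposition10} as the base case for the cube's drop, so your argument is in effect the $d=2$ instance of the cube theorem rather than the paper's proof of this proposition. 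No circularity arises, since Corollary \ref{cor:product} rests on the cited toric fiber product result, not on Proposition \ref{proposition10}. The ``main obstacle'' you flag is resolved exactly as in the paper's proof of Corollary \ref{cor:product}: solve the normalization equation for $s$ and substitute, whereupon the complex score equations decouple into the two marginal quadratics $(b_k-1)\theta_k^2+b_k\theta_k+(b_k+1)=0$ (cf.\ Lemma \ref{lemma15} and Proposition \ref{prop:critcube}); there are no spurious or lost solutions because the count reduces to two decoupled quadratics with nonzero roots and nonvanishing discriminants for generic $b_k$, and genericity of $(b_1,b_2)$ follows since $u\mapsto(b_1,b_2)$ is dominant. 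One small caution: your additive-splitting of the log-likelihood argues about the real optimization, whereas the ML degree counts complex critical points, so the decoupling should be phrased at the level of score equations as above rather than of the likelihood function. Comparing what each approach buys: yours delivers the exact value (and explicit critical points) by elementary means, whereas the paper's inference $\textup{mldeg}(V)=8-4$ from counting singularities is terser, since Theorem \ref{thm:Adet} alone yields only strict inequality; conversely, the discriminant route explains the drop structurally via $E_A(c)$ and adapts directly to non-standard scalings, as exploited in Theorem \ref{theorem18}.
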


\begin{proof}
After an affine transformation, the lattice points of $P_{8\textup{a}}$ define the matrix
\begin{equation*}
A = \begin{bmatrix}
0 & 1 & 2 & 0 & 1 & 2 & 0 & 1 & 2 \\
2 & 2 & 2 & 1 & 1 & 1 & 0 & 0 & 0 \\
\end{bmatrix}.
\end{equation*}
By Theorem \ref{theorem6}, $\textup{deg} (V) = 2! \cdot \textup{vol} (P_{8\textup{a}}) = 8$.

To show $\textup{mldeg}(V) = 4$, we study $E_A(c)$. Vertices $a_i$ have $\Delta_{a_i} = 1$. The upper edge $e=\mathrm{conv}\{(0,2),(2,2)\}$ defines the polynomial $f_{c,e} = c_{02} \theta_2^2+c_{12}\theta_1 \theta_2^2+c_{22}\theta_1^2 \theta_2^2$. It has a non\-tri\-vi\-al singularity if and only if $c_{02}+c_{12}\theta_1+c_{22}\theta_1^2$ does. Consequently, $\Delta_{e}(c) = c_{12}^2-4c_{02}c_{22} \neq 0$ for $c = (1, 1, \dots, 1)$. The remaining three edges can be treated analogously. Therefore, it is sufficient to study $\nabla_A$. The corresponding polynomial is \begin{align*}
    f &= \theta_2^2+\theta_1 \theta_2^2+\theta_1^2 \theta_2^2+\theta_2+\theta_1 \theta_2+\theta_1^2 \theta_2+1+\theta_1+\theta_1^2 \\
    &= (\theta_1^2+\theta_1+1)(\theta_2^2+\theta_2+1) 
\end{align*} 
and therefore has four singularities $(\theta_1,\theta_2)$ given by the solutions of $\theta_1^2+\theta_1+1=\theta_2^2+\theta_2+1=0$. 
It follows that there is indeed an ML degree drop and moreover $\textup{mldeg}(V) = 8 - 4 = 4$.
\end{proof}

This example will be generalized in higher dimensions in Section \ref{section5}. In particular, an explicit form for the MLE is given in Proposition \ref{prop:critcube}.

\section{Reflexive Polyhedra} \label{section4}

While the classification of reflexive polygons is relatively straightforward, the classification of reflexive polytopes in higher dimensions is a challenging problem. It is theoretically possible due to the classification algorithm of \citet{OnTheClassificationOfReflexivePolyhedra}. An application of this algorithm to the three-dimensional case can be studied in \cite{ClassificationOfReflexivePolyhedraInThreeDimensions}. The corresponding 4319 polytopes are listed in the \texttt{KS} database which can be found at \url{http://hep.itp.tuwien.ac.at/~kreuzer/CY/CYk3.html}. We refer to polytope $i$, by which we mean the $i$-th entry in the database, numbering starts at 0. Our main interest is the degree of the associated toric variety for each polytope, its maximum likelihood degree and their relation. 

\begin{figure}[h]
\centering
\begin{subfigure}[b]{0.49\textwidth}
(a)
\end{subfigure}
\hfill
\begin{subfigure}[b]{0.49\textwidth}
(b)
\end{subfigure}
\centering
\begin{subfigure}[b]{0.49\textwidth}
\centering
\includegraphics[width=\textwidth]{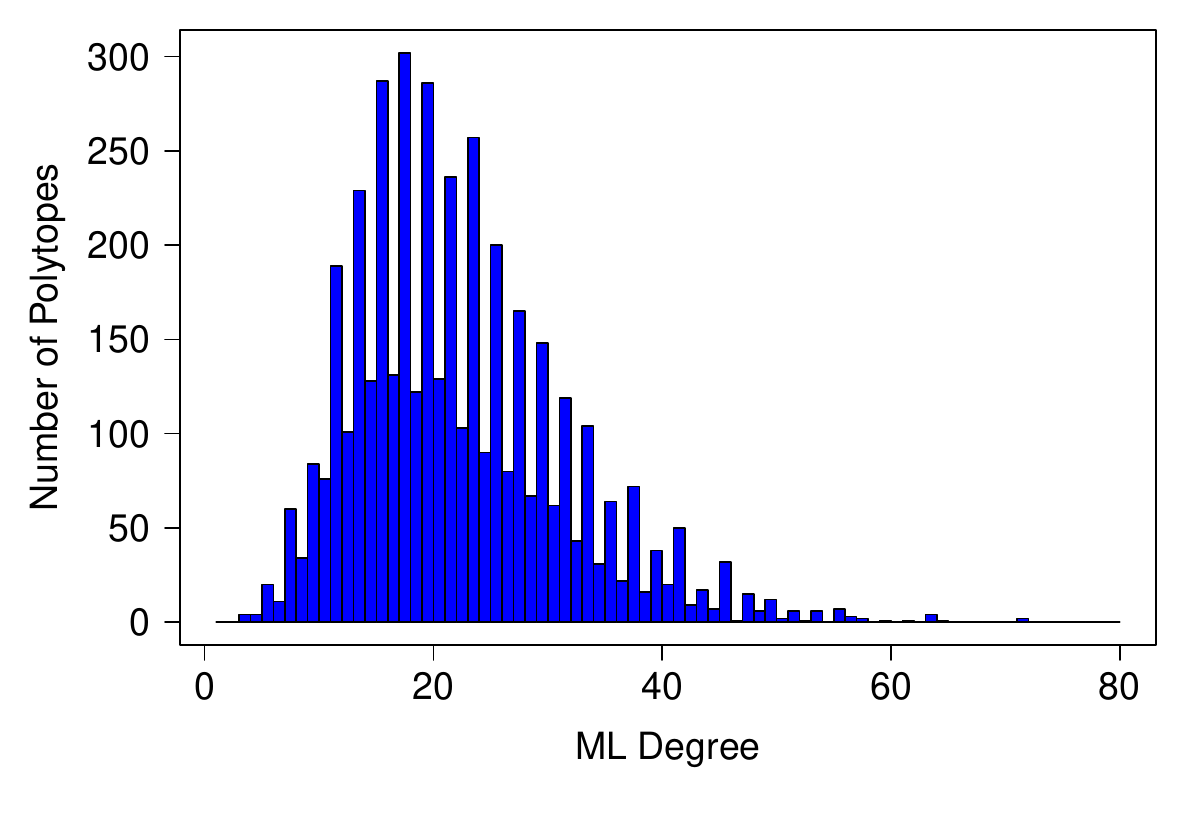}
\end{subfigure}
\hfill
\begin{subfigure}[b]{0.49\textwidth}
\centering
\includegraphics[width=\textwidth]{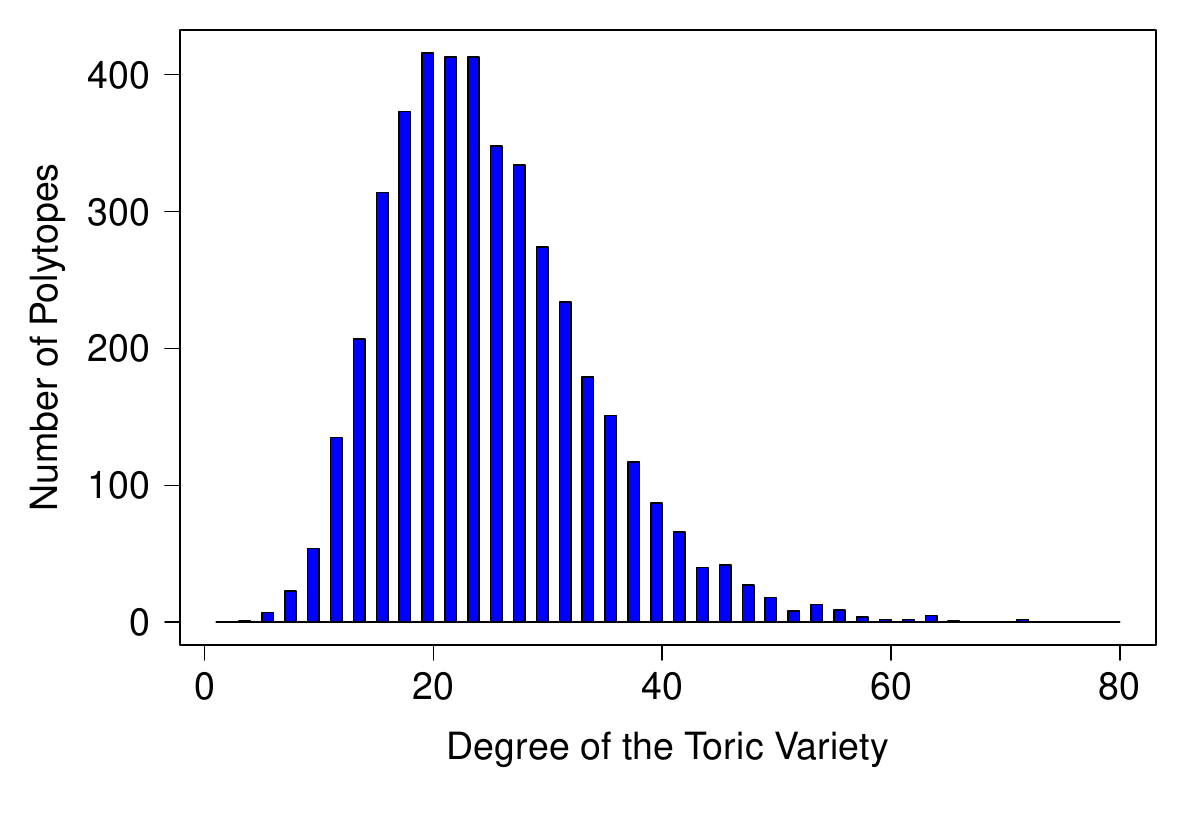}
\end{subfigure}
\begin{subfigure}[c]{1\textwidth}
\hspace{4.2cm}(c)
\end{subfigure}
\hspace{0.475\textwidth}
\begin{subfigure}[c]{\textwidth}
\centering
\includegraphics[width=0.49\textwidth]{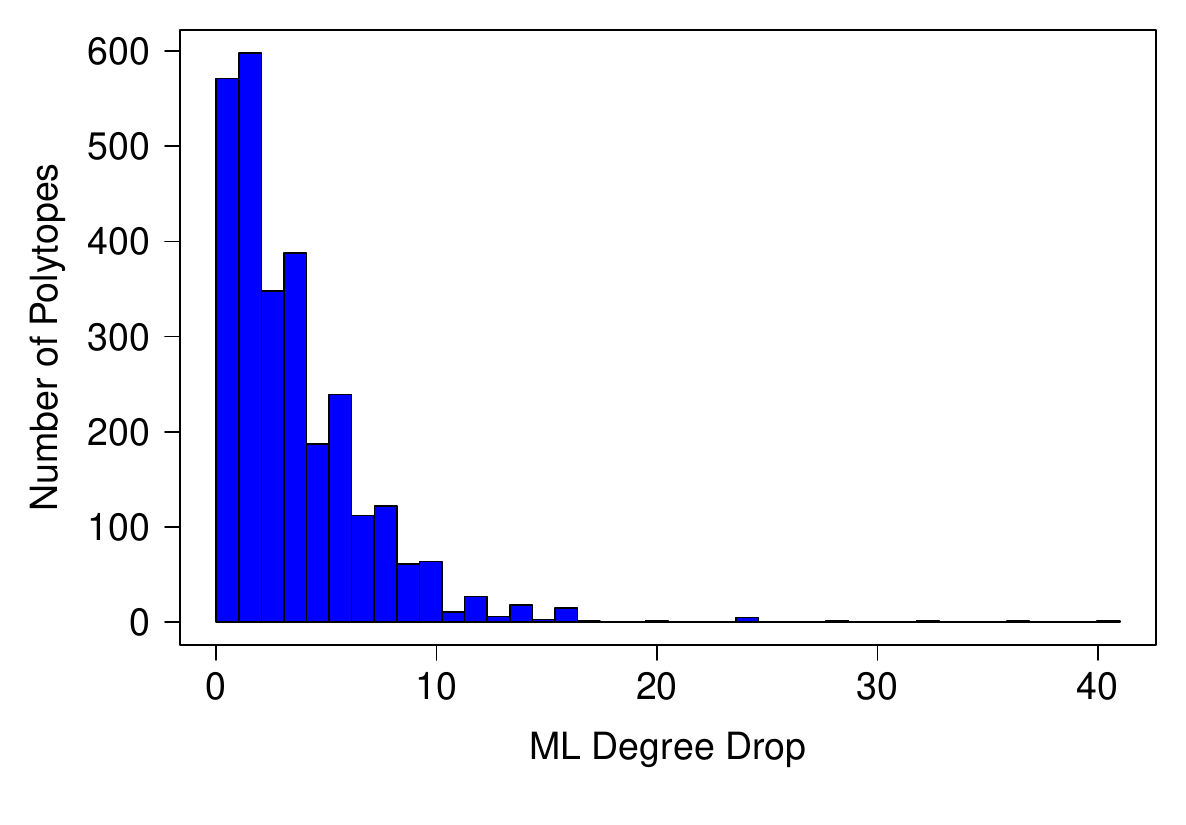}
\end{subfigure}
\hfill
\hspace{0.49\textwidth}
\caption[]{\small Histograms of the data obtained by computations with three-dimensional reflexive polytopes. In (a) the frequency distribution of $\textup{mldeg}(V)$ is shown, while in (b) the frequency distribution of $\textup{deg}(V)$ is illustrated. Histogram (c) shows the frequency distribution of all ML degree drops larger than zero.} \label{figure2}
\end{figure} 

Using \texttt{Macaulay2} \citep{Macaulay2}, the package \texttt{ReflexivePolytopesDB.m2} allows access to the mentioned database. The computer-aided analysis of the models requires the consideration of polynomials instead of Laurent polynomials. Therefore we consider all polytopes under translation such that all contained lattice points are non-negative. All computations were performed using the standard scaling. We computed $\textup{deg} (V)$ for each reflexive polytope using \texttt{FourTiTwo.m2}, an \texttt{Macaulay2} interface to most functions of the software \texttt{4ti2} \citep{4ti2}. We also determined the number of generators of the toric ideal. The main tool we used for both steps is the \texttt{toricMarkov} function. For the computation of all ML degrees we used homotopy continuation \citep{SolvingPolynomialSystemsViaHomotopyContinuationAndMonodromy, CoefficientParameterPolynomialContinuation}, in particular the \texttt{Julia} \citep{JuliaAFreshApproachToNumericalComputing} package \texttt{HomotopyContinuation.jl} \citep{HomotopyContinuationJL}. Further invariants determined using the \texttt{Polyhedra.m2} package are the components of the $f$-vector and the number of lattice points. The $f$-vector of a three-dimensional polytope is of the form $(f_0, f_1, f_2)$, where $f_i$ is the number of $i$-dimensional faces contained in the polytope.

In order to visualize the results, the frequency distributions of ML degrees and degrees of the toric varieties are represented by histograms in Figure \ref{figure2}. One observes that Histogram (b) presents gaps in the possible values for the degree, and this can be explained by the following proposition.

\begin{proposition}
The toric variety associated to an odd-dimensional reflexive polytope has even degree. 
\end{proposition}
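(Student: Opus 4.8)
The plan is to reduce the statement to a parity fact about the normalized volume and then exploit the symmetry of the Ehrhart $h^*$-vector that characterizes reflexivity. Let $P \subseteq \mathbb{R}^d$ be the $d$-dimensional reflexive polytope, with $d$ odd. By Theorem \ref{theorem6}, the degree of $V_A$ equals the normalized volume $\operatorname{Vol}(P) := d! \cdot \operatorname{vol}(P)$ of $P$. Hence it suffices to prove that $\operatorname{Vol}(P)$ is even when $d$ is odd, turning a geometric statement about degrees into an arithmetic statement about the normalized volume.

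First I would pass to Ehrhart theory. Writing the Ehrhart series of $P$ as $\sum_{t \ge 0} L_P(t)\, z^t = \frac{h^*_0 + h^*_1 z + \dots + h^*_d z^d}{(1-z)^{d+1}}$, Stanley's nonnegativity theorem guarantees that the $h^*_i$ are nonnegative integers, and comparing leading coefficients (equivalently, evaluating the numerator at $z = 1$) gives $\operatorname{Vol}(P) = \sum_{i=0}^d h^*_i$. Thus the parity of $\deg(V_A)$ is exactly the parity of $\sum_{i=0}^d h^*_i$.

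The key input is Hibi's characterization of reflexivity: a $d$-dimensional lattice polytope containing the origin in its interior is reflexive if and only if its $h^*$-vector is palindromic, that is, $h^*_i = h^*_{d-i}$ for all $i$. Applying the relevant (reflexive $\Rightarrow$ symmetric) direction to $P$, the $d+1$ coefficients are matched in pairs under the involution $i \mapsto d-i$. Since $d$ is odd, the index set $\{0, 1, \dots, d\}$ has an even number of elements and this involution has no fixed point, because a fixed point would force $2i = d$. Therefore $\sum_{i=0}^d h^*_i = 2 \sum_{i=0}^{(d-1)/2} h^*_i$ is even, and the conclusion follows.

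I expect the only real subtlety to be pinning down the correct normalization and quoting the palindromy statement precisely; the combinatorial parity step is immediate once the symmetric $h^*$-vector is available. As an alternative that sidesteps Hibi's theorem, I could triangulate $P$ into the lattice pyramids $\operatorname{conv}(\{0\} \cup F)$ over its facets $F$ and use that reflexivity places every facet at lattice height one from the interior point, so that $\operatorname{Vol}(P) = \sum_F \operatorname{Vol}(F)$ with each $\operatorname{Vol}(F)$ a normalized $(d-1)$-volume; however, extracting the parity from this decomposition is less direct than the $h^*$-symmetry route, so I would keep the Ehrhart argument as the main line of proof.
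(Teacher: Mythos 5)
Your proof is correct and follows essentially the same route as the paper: both reduce $\deg(V_A)$ to the normalized volume via Theorem \ref{theorem6}, write it as $\sum_{i=0}^d h^*_i$, and invoke Hibi's palindromic characterization of reflexivity so that for odd $d$ the fixed-point-free involution $i \mapsto d-i$ pairs the coefficients and yields $2\sum_{i=0}^{(d-1)/2} h^*_i$. Your explicit remark that a fixed point would force $2i = d$ is a nice touch of precision the paper leaves implicit, but the argument is the same.
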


\begin{proof}
This is a consequence of results in Ehrhart theory  \citep[Chapter 3]{ComputingTheContinuousDiscretely}. Let $d=2k+1$,  and consider the $h^\star$-vector $(h_0^\star, h_1^\star, \dots, h_d^\star) \in \mathbb{N}^{d+1}$. 
A classical result states that $$\textup{nvol} (P) = h_0^\star + h_1^\star + h_2^\star + \dots + h_d^\star.$$ 
Now, Hibi's palindromic theorem \citep{EhrhartPolynomialsOfConvexPolytopesHVectorsOfSimplicialComplexesAndNonsingularProjectiveToricVarieties} states that a polytope is reflexive if and only if its $h^*$-vector is palindromic. So we have
$(h_0^\ast, h_1^\ast, \dots, h_{k}^\ast, h_{k}^\ast, \dots, h_1^\ast, h_0^\ast)$. 
Therefore, by Theorem \ref{theorem6},
\begin{equation*}
\textup{deg} (V) = \textup{nvol} (P) = 2 \sum_{i=0}^k h_i^\star. \qedhere
\end{equation*}
\end{proof}

Among the 4319 reflexive polytopes, we find that 2784 exhibit an ML degree drop. The maximum ML degree drop is given by 40. The only three-dimensional reflexive polytope exhibiting an ML degree drop 40 is the $3$-cube corresponding to polytope \texttt{418}. Figure \ref{figure2} shows the histogram of the frequency distribution of all ML degree drops larger than zero.

The minimum ML degree is given by four. This corresponds to four models whose data sets are presented in Table \ref{table1}. Three of these models exhibit an ML degree drop. The corresponding polytopes are shown in Figure \ref{figure3}. 

\begin{table}[h!]
\centering
\begin{tabular}{c c c c c c c c} 
Polytope & mldeg($V$) & deg($V$) & $f_0$ & $f_1$ & $f_2$ & \# Lattice Points & \# Generators \\ [0.2ex] 
\hline \\ [-1ex] 
 \texttt{0} & 4 & 4 & 4 & 6 & 4 & 5 & 1 \\ 
 \texttt{1184} & 4 & 10 & 7 & 12 & 7 & 8 & 9 \\
 \texttt{2582} & 4 & 12 & 8 & 14 & 8 & 9 & 10 \\
 \texttt{4101} & 4 & 18 & 11 & 19 & 10 & 12 & 28 \\ [-1ex] 
\end{tabular}
\caption{Reflexive polyhedra of ML degree four.} \label{table1}
\end{table}

\begin{figure}[h!]
\centering
\begin{subfigure}[b]{0.475\textwidth}
\centering
\includegraphics[scale=0.5]{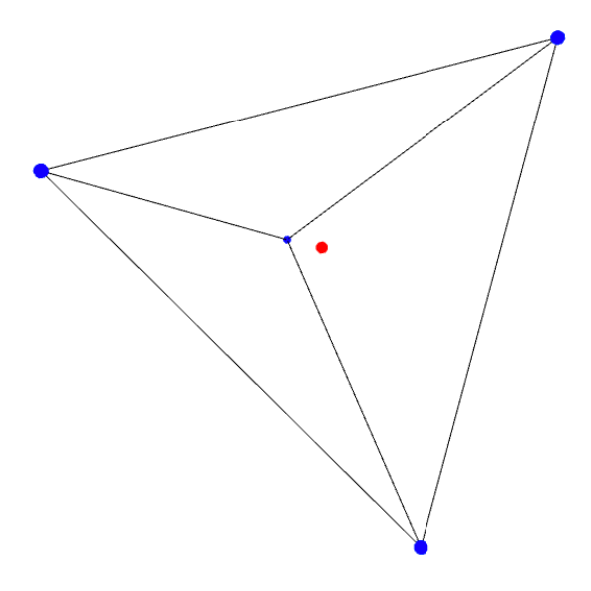}
\end{subfigure}
\hfill
\begin{subfigure}[b]{0.475\textwidth}
\centering
\includegraphics[scale=0.5]{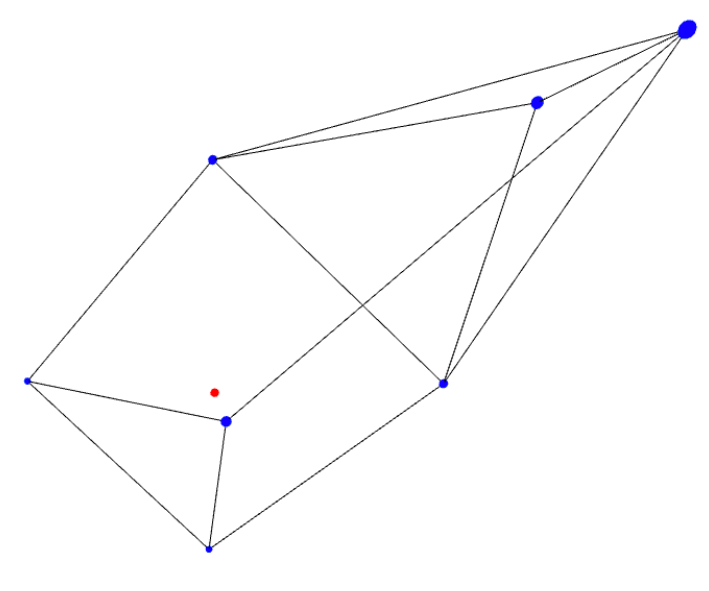}
\end{subfigure}
\centering
\begin{subfigure}[b]{0.475\textwidth}
(a) Polytope \texttt{0}
\end{subfigure}
\hfill
\begin{subfigure}[b]{0.475\textwidth}
(b) Polytope \texttt{1184}
\end{subfigure}
\vskip\baselineskip
\begin{subfigure}[b]{0.475\textwidth}
\centering
\includegraphics[scale=0.5]{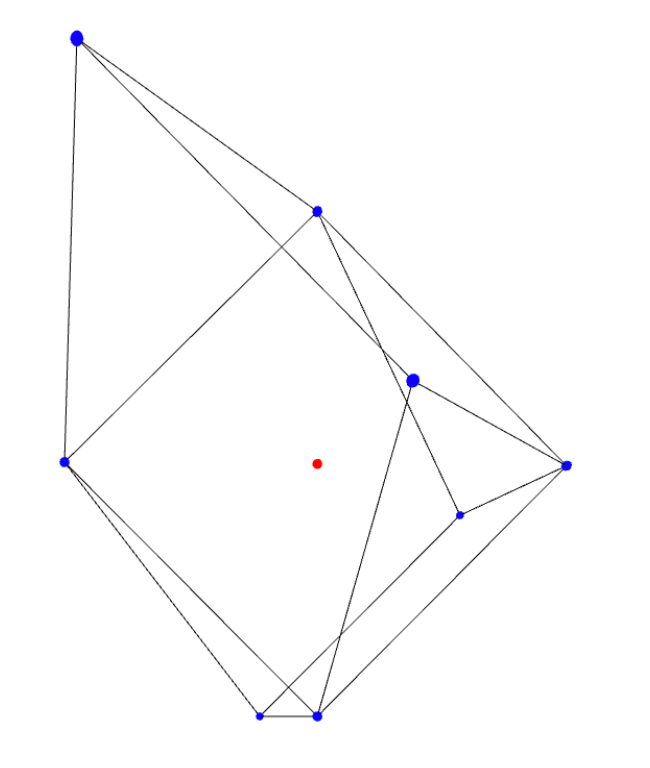}
\end{subfigure}
\hfill
\begin{subfigure}[b]{0.475\textwidth}
\centering
\includegraphics[scale=0.5]{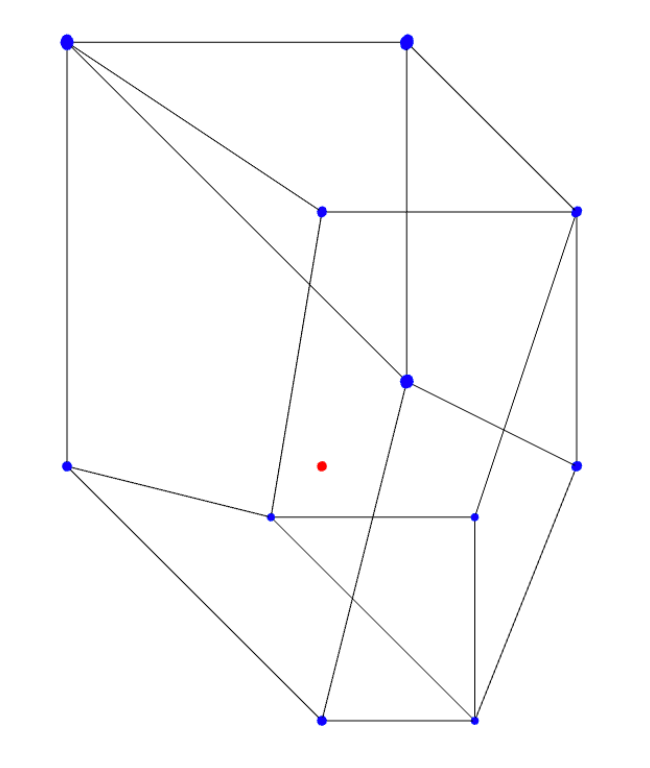}
\end{subfigure}
\centering
\begin{subfigure}[b]{0.475\textwidth}
(c) Polytope \texttt{2582}
\end{subfigure}
\hfill
\begin{subfigure}[b]{0.475\textwidth}
(d) Polytope \texttt{4101}
\end{subfigure}
\caption[]{\small Three-dimensional reflexive polytopes with ML degree four for the standard scaling. The red point corresponds to the interior lattice point. The visualizations were created using Polymake \citep{ComputingConvexHullsAndCountingIntegerPointsWithPolymake, polymakeAFrameworkForAnalyzingConvexPolytopes}.} \label{figure3}
\end{figure}

While Polytope \texttt{0} did not exhibit an ML degree drop, according to Theorem \ref{thm:Adet}, a scaling can change this. We illustrate this situation with the following example.

\begin{example} \upshape
After an affine transformation, the design matrix for Polytope $0$ is given by
\begin{equation*}
A = \begin{blockarray}{ccccc}
a_0 & a_1 & a_2 & a_3 & a_4\\
\begin{block}{[ccccc]}
1 & 2 & 1 & 1 & 0\\
1 & 1 & 2 & 1 & 0\\
1 & 1 & 1 & 2 & 0\\
\end{block}
\end{blockarray}.
\end{equation*}
The proper faces are $a_1, a_2, a_3, a_4$ and
\begin{alignat*}{2}
e_0 &= \mathrm{conv}(a_1, a_2), \qquad &&e_1 = \mathrm{conv}(a_1, a_3), \qquad e_2 = \mathrm{conv}(a_1, a_4),  \\
e_3 &= \mathrm{conv}(a_2, a_3), \qquad &&e_4 = \mathrm{conv}(a_2, a_4), \qquad e_5 = \mathrm{conv}(a_3, a_4), 
\end{alignat*}
$$
\Gamma_0 = \mathrm{conv}(a_1, a_2, a_3), \qquad  \Gamma_1 = \mathrm{conv}(a_1, a_2, a_4), \qquad
\Gamma_2 = \mathrm{conv}(a_1, a_3, a_4), \qquad  \Gamma_3 = \mathrm{conv}(a_2, a_3, a_4).
$$
One sees that the variety of scalings is empty for all these proper faces. The remaining discriminant $\Delta_A$ can be computed with the following \texttt{Macaulay2} code.
\begin{verbatim}
    R = QQ[c_111,c_211,c_121,c_112,c_000,t_1,t_2,t_3]
    f = c_111*t_1*t_2*t_3 + c_211*t_1^2*t_2*t_3 + c_121*t_1*t_2^2*t_3 +
          c_112*t_1*t_2*t_3^2 + c_000
    I = ideal(f,diff(t_1,f),diff(t_2,f),diff(t_3,f))
    J = ideal(t_1*t_2*t_3)
    K = saturate(I,J)
    eliminate({t_1,t_2,t_3},K)
\end{verbatim}
The result is the ideal generated by $\Delta_A = c_{111}^4-256 c_{211} c_{121} c_{112} c_{000}$. 

Using the scaling $c_{111} = 4, c_{211} = c_{121} = c_{112} = c_{000} = 1$, we find that the ML degree is three. \end{example}

Histogram (a) in Figure \ref{figure2} shows that there are as many ML degree five polyhedra as there are ML degree four polyhedra. The associated data is shown in Table \ref{table2}.

\begin{table}[h!]
\centering
\begin{tabular}{c c c c c c c c} 
Polytope & mldeg($V$) & deg($V$) & $f_0$ & $f_1$ & $f_2$ & \# Lattice Points & \# Generators \\ [0.2ex] 
\hline \\ [-1ex] 
\texttt{132} & 5 & 6 & 5 & 8 & 5 & 6 & 2 \\ 
\texttt{379} & 5 & 8 & 6 & 11 & 7 & 7 & 5 \\
\texttt{3314} & 5 & 14 & 9 & 16 & 9 & 10 & 15 \\
\texttt{3778} & 5 & 14 & 9 & 16 & 9 & 10 & 15 \\ [-1ex] 
\end{tabular}
\caption{Reflexive polyhedra of ML degree five.} \label{table2}
\end{table}

We observe that now all four polytopes exhibit an ML degree drop with the standard scaling. The next example shows how by modifying the scaling we can make the drop even larger. 

\begin{example} \upshape
After an affine transformation, the design matrix of Polytope \texttt{132} is given by
\begin{equation*}
A = \begin{blockarray}{cccccc}
a_0 & a_1 & a_2 & a_3 & a_4 & a_5\\
\begin{block}{[cccccc]}
1 & 2 & 1 & 0 & 0 & 1\\
1 & 1 & 1 & 1 & 0 & 2\\
1 & 1 & 2 & 0 & 1 & 1\\
\end{block}
\end{blockarray}.
\end{equation*}
The proper faces are $a_1, a_2, \ldots, a_5$ and 
\begin{alignat*}{2}
e_0 &= \mathrm{conv}(a_1, a_2), \qquad e_1 = \mathrm{conv}(a_1, a_3), \qquad e_2 &&= \mathrm{conv}(a_1, a_4), \qquad e_3 = \mathrm{conv}(a_1, a_5), \\
e_4 &= \mathrm{conv}(a_2, a_4), \qquad e_5 = \mathrm{conv}(a_2, a_5), \qquad e_6 &&= \mathrm{conv}(a_3, a_4), \qquad e_7 = \mathrm{conv}(a_3, a_5), 
\end{alignat*}
\vspace{-0.7cm}
$$ \Gamma_1 = \mathrm{conv}(a_1, a_3, a_4), \qquad
\Gamma_2 = \mathrm{conv}(a_1, a_3, a_5), \qquad \Gamma_3 = \mathrm{conv}(a_1, a_2, a_5), \qquad 
\Gamma_4 = \mathrm{conv}(a_1, a_2, a_4),$$
$$ 
    \Gamma_0 = \mathrm{conv}(a_2, a_3, a_4, a_5). 
$$
The variety of scalings is empty for all proper faces except for the last one $\Gamma_0$. For $c = (1,1,\ldots,1)$,
\begin{equation*}
\Delta_{\Gamma_0} = c_{112} c_{010} - c_{001} c_{121} = 0,
\end{equation*}
i.e. $\Gamma_0$ explains the ML degree drop shown in Table \ref{table2}. Considering the whole polytope,
\begin{align*}
\Delta_A &= c_{111}^6 + 54 c_{111}^3 c_{211} c_{112} c_{010}+ 729 c_{211}^2 c_{112}^2 c_{010}^2 + 54 c_{111}^3 c_{211} c_{001} c_{121} \\
&\quad -1458 c_{211}^2 c_{112} c_{010} c_{001} c_{121} + 729 c_{211}^2 c_{001}^2 c_{121}^2.
\end{align*}
For $c_{111} = c_{112} = c_{010} = c_{001} = c_{121} = 1$ and $c_{211} = -\frac{1}{108}$ we have that both $\Delta_{\Gamma_0} = \Delta_A = 0$. We verified that with this choice of scaling the ML degree drops further to four. \end{example}

We believe this database of ML degrees for all reflexive polyhedra to be a good starting point for further study. One can look for polytopes with special properties of interest, for example, those that are \emph{smooth}. The classification of smooth polytopes has been discussed in many ways \citep{SmoothFanoPolytopesWithManyVertices, ClassifyingSmoothLatticePolytopesViaToricFibrations, AClassificationOfSmoothConvex3PolytopesWithAtMost16LatticePoints}. In the reflexive case, all smooth polytopes up to dimension nine are known. They were determined using \O bro's SFP algorithm \citep{AnAlgorithmForTheClassificationOfSmoothFanoPolytopes} and are listed, for example, in the \texttt{polyDB} database \citep{polyDBADatabaseForPolytopesAndRelatedObjects} (up to lattice equivalence). We could not find any information in the literature about which polytope numbers in the \texttt{KS} database correspond to smooth polytopes. Therefore, we determined the numbers using the smooth reflexive polytopes listed in the \texttt{polyDB} database and the \texttt{LatticePolytopes.m2} package. Table \ref{table3} shows the correspondences including ML degrees and degrees of the toric variety. Note that only two of the 18 smooth reflexive polytopes in three dimensions exhibit no ML degree drop, namely Polytope \texttt{1} and Polytope \texttt{129}. In other words, we see that conditioning on the property of being smooth increases the frequency of a reflexive polyhedron exhibiting an ML degree drop to almost $89\%$.

\begin{table}[h!]
\centering
\begin{tabular}{c c c c } 
\makecell{Polytope number in the \\ \texttt{KS} database} & \makecell{Polytope number in the \\ \texttt{polyDB} database} & mldeg(V) & deg(V) \\ [0.2ex] 
\hline \\ [-1ex] 
 1 & 18 & 64 & 64 \\ 
 127 & 17 & 18 & 54 \\
 129 & 15 & 56 &56 \\
 135 & 14 & 51 & 54 \\
 235 & 2 & 56 & 62 \\
 418 & 16 & 8 & 48 \\
 484 & 13 & 38 & 44 \\
 486 & 11 & 43 & 46 \\
 490 & 1 & 49 & 50 \\
 492 & 6 & 42 & 52 \\
 496 & 7 & 44 & 50 \\
 510 & 12 & 16 & 48 \\
 1152 & 9 & 14 & 42 \\
 1377 & 8 & 31 & 40 \\
 1379 & 5 & 29 & 44 \\
 1383 & 3 & 45 & 46 \\
 2310 & 10 & 12 & 36 \\
 2627 & 4 & 32 & 36 \\ [1ex] 
\end{tabular}
\caption{The 18 smooth polytopes in the \texttt{KS} database and their correspondence to the collection of smooth reflexive polytopes in the \texttt{polyDB} database in three dimensions. In the \texttt{KS} database, numbering starts at 0. The third and fourth columns show the associated (ML) degree.} \label{table3}
\end{table}

\section{Hypercubes and Cross Polytopes} \label{section5}

One of the best-known examples of a reflexive polytope in arbitrary dimension is the $d$-dimensional hypercube $C_d = [-1,1]^d$, also known as the $d$-cube. According to (\ref{gl2}), an upper bound for the ML degree of $C_d$ is given by the degree of the toric variety defined by $C_d$.

\begin{proposition} \label{proposition12}
The degree of $C_d$ is $d! \cdot 2^d$.
\end{proposition}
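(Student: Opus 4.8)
The goal is to show $\deg(C_d) = d! \cdot 2^d$. By Theorem \ref{theorem6}, the degree of the toric variety $V_A$ equals the normalized volume $d! \cdot \mathrm{vol}(P)$, so the statement reduces entirely to a volume computation: I must show that the ordinary Euclidean volume of the cube $C_d = [-1,1]^d$ is $2^d$. This is immediate, since $C_d$ is a product of $d$ intervals each of length $2$, giving $\mathrm{vol}(C_d) = \prod_{i=1}^d 2 = 2^d$. Multiplying by the $d!$ normalization factor then yields $\deg(C_d) = d! \cdot 2^d$.

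The plan, written out carefully, is as follows. First I would identify the design matrix $A$ whose columns are the lattice points of $C_d = [-1,1]^d$, so that $P = \mathrm{conv}(a_1,\ldots,a_n)$ is exactly the $d$-cube (after the translation to nonnegative coordinates used throughout the paper, which is a unimodular transformation and hence preserves normalized volume). Second, I would invoke Theorem \ref{theorem6} to write $\deg(V_A) = d! \cdot \mathrm{vol}(C_d)$. Third, I would compute $\mathrm{vol}(C_d) = 2^d$ directly as the volume of a box with side lengths $2$. Combining these gives the claimed formula.

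The only subtle point is ensuring the normalization conventions line up: Theorem \ref{theorem6} uses the \emph{normalized} volume $d! \cdot \mathrm{vol}(P)$, where $\mathrm{vol}$ denotes the ordinary Euclidean volume with respect to the lattice $\mathbb{Z}^d$, and one must confirm that the cube $[-1,1]^d$ is being measured against this same lattice rather than a sublattice. Since the vertices of $C_d$ lie in $\mathbb{Z}^d$ and the lattice used for the embedding is the full $\mathbb{Z}^d$, there is no discrepancy, and the Euclidean volume $2^d$ is indeed the correct quantity to scale by $d!$.

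I expect no genuine obstacle here: the result is essentially a restatement of the elementary fact $\mathrm{vol}([-1,1]^d) = 2^d$ combined with the degree-equals-normalized-volume dictionary already established in Theorem \ref{theorem6}. The proof is therefore a one-line volume calculation, and the main value of stating it as a proposition is to record the baseline against which the ML degree drops for $C_d$ (computed in the subsequent results of this section) are measured.
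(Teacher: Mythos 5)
Your proposal is correct, and it shares the paper's overall skeleton --- reduce the degree to the normalized volume via the degree-equals-normalized-volume dictionary --- but you carry out the volume computation by a different and more elementary route. The paper, after the same reduction (it cites Theorem \ref{thm:Adet} there, apparently a slip for Theorem \ref{theorem6}, which is the reference you correctly use), computes the normalized volume in two less direct ways: via the recursion $\deg(C_d) = \deg(C_{d-1}) \cdot f_{d-1}$ with $f_{d-1} = 2d$ facets, which unwinds to $2 \cdot 4 \cdots 2d = d! \cdot 2^d$, and alternatively by identifying $d!\cdot 2^d$ as the order of the hyperoctahedral group $(\mathbb{Z}/2)^d \rtimes S_d$ of symmetries of the cube. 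You instead just compute $\mathrm{vol}([-1,1]^d) = 2^d$ as a product of $d$ intervals of length $2$ and multiply by the $d!$ normalization. Your version is the shortest and most self-contained; the paper's alternatives buy some structural insight (the facet recursion foreshadows the inductive boundary argument used for the ML degree drop corollary, and the group-order observation records the cube's symmetry), but nothing in your argument is missing, and your attention to the normalization convention --- that the cube is measured against the full lattice $\mathbb{Z}^d$, so no sublattice correction is needed --- is exactly the right point to check when applying Theorem \ref{theorem6}.
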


\begin{proof}
By Theorem \ref{thm:Adet}, it is enough to compute the normalized volume of $C_d$. One way is using the $f$-vector $(f_0, f_1, \ldots, f_{d-1})$ of $C_d$ and the fact that $f_{d-1} = 2d$, so that the recursive formula $\textup{deg}(C_d) = \textup{deg}(C_{d-1}) f_{d-1}$ holds. Alternatively, the symmetry group of $C_d$ is a realization of the hyperoctahedral group $(\mathbb{Z}/2)^d \rtimes S_d$ \citep{ACharacterRelationshipBetweenSymmetricGroupAndHyperoctahedralGroup}, whose order is given by $d! \cdot 2^d$, see e.g. \citep{AttractorsWithTheSymmetryOfTheNCube}.
\end{proof}

An inductive argument shows that the degree of $C_d$ is a strict upper bound on the ML degree.

\begin{corollary}
For $d>1$, $C_d$ exhibits an ML degree drop. 
\end{corollary}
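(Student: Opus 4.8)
The plan is to apply Theorem~\ref{thm:Adet}, which says that for the standard scaling the desired strict inequality $\textup{mldeg}(C_d)<\textup{deg}(C_d)$ is equivalent to the vanishing $E_A(1,\ldots,1)=0$ of the principal $A$-determinant. Since $E_A=\prod_{\Gamma}\Delta_{\Gamma\cap A}$ is a product over all nonempty faces $\Gamma$ of $C_d$, it suffices to produce a single face whose discriminant already vanishes at the all-ones scaling; the remaining factors are then irrelevant. So the whole argument reduces to locating one ``bad'' face and quoting the polygon computation.

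First I would identify that face. For $d\ge 2$, fixing $d-2$ of the coordinates of $C_d=[-1,1]^d$ to values in $\{-1,+1\}$ cuts out a two-dimensional face $\Gamma$; the lattice points of $C_d$ lying on $\Gamma$ are exactly those whose two free coordinates range over $\{-1,0,1\}$, so $\Gamma$ carries precisely nine lattice points and is unimodularly equivalent to the reflexive polygon $P_{8\textup{a}}$. In the inductive phrasing, the facets of $C_d$ are copies of $C_{d-1}$, with base case $C_2=P_{8\textup{a}}$, and one propagates a bad face of $C_{d-1}$ upward using that a face of a facet is again a face of $C_d$.

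The core step is to transport the drop of Proposition~\ref{proposition10} from $\Gamma$ to all of $C_d$. By Proposition~\ref{proposition10} together with Theorem~\ref{thm:Adet}, we have $E_A(1,\ldots,1)=0$ for $P_{8\textup{a}}$; and since the four edge discriminants computed there (such as $c_{12}^2-4c_{02}c_{22}$) are nonzero at the all-ones point, the vanishing factor is the discriminant $\Delta_\Gamma$ of the square itself. Because every face of $\Gamma$ is again a face of $C_d$, and because $\Delta_{\Gamma\cap A}$ depends only on the lattice configuration of $\Gamma$ --- on which the standard scaling of $C_d$ restricts to all-ones --- this same factor appears in $E_A$ for $C_d$. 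Hence $E_A(1,\ldots,1)=0$, and Theorem~\ref{thm:Adet} delivers $\textup{mldeg}(C_d)<\textup{deg}(C_d)=d!\cdot 2^d$.

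A conceptual check, which also explains why the square is the decisive face, comes from the standard-scaling polynomial $f=\prod_{i=1}^d(\theta_i+1+\theta_i^{-1})$: taking $\theta_1,\theta_2$ to be the roots of $\theta^2+\theta+1$ (primitive cube roots of unity) kills two of the factors, so that $f$ and all of its partial derivatives vanish simultaneously, producing a singular point on the torus. The step to watch is the bookkeeping of which discriminant factor is genuinely codimension one: the torus singularity shows only that $(1,\ldots,1)\in\nabla_A$ for the full cube, but if $\nabla_A$ has codimension greater than one then $\Delta_A$ is trivial and contributes nothing to $E_A$. Routing the argument through the two-dimensional face $P_{8\textup{a}}$, whose discriminant is a genuine hypersurface vanishing at the all-ones point, sidesteps this pitfall cleanly.
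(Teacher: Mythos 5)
Your proof is correct and follows essentially the same route as the paper's: both reduce the claim to the square face $P_{8\textup{a}}$ of Proposition~\ref{proposition10} and use the factorization of the principal $A$-determinant over faces (via Theorem~\ref{thm:Adet}) to conclude $E_A(1,\ldots,1)=0$, the paper phrasing this as an induction on facets while you locate a two-dimensional square face directly. Your explicit identification of the vanishing factor as $\Delta_\Gamma$ and your remark about the possible dual defect of the full cube's $\nabla_A$ actually make the argument slightly more careful than the paper's version.
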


\begin{proof}
For $d=2$ the statement was proved in Proposition \ref{proposition10}, since polygon $P_{8\textup{a}}$ is precisely a square. By induction, we assume that the statement holds for $C_d$. The $(d+1)$-cube contains $2(d+1)$ cubes of dimension $d$ in its boundary, and hence their $A$-discriminants appear as a factor in $E_A(c)$ where $A$ is the design matrix of $C_{d+1}$.
\end{proof}

Of course, one wishes to determine a closed formula for the ML degree of $C_d$. We do not keep the reader in suspense and immediately give the answer.

\begin{theorem} \label{thm:mldegcube}
The ML degree of the $d$-cube $C_d$ is $2^d$.
\end{theorem}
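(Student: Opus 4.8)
The plan is to exploit the product structure of the cube. After translating $C_d=[-1,1]^d$ so that its $3^d$ lattice points become $\{0,1,2\}^d$, the defining polynomial for the standard scaling factors as
\[
f(\theta)=\sum_{k\in\{0,1,2\}^d}\theta^{k}=\prod_{j=1}^d\bigl(1+\theta_j+\theta_j^2\bigr),
\]
exactly as in the two-dimensional computation of Proposition \ref{proposition10}. By definition the ML degree is the number of complex solutions, for generic data $u$, of the score equations $\frac{1}{u_+}Au=Ap$ with $p\in\mathcal{M}_A$. First I would rewrite these in the parametrization $p_i=\theta^{a_i}/f(\theta)$. Using $\theta_k\,\partial_{\theta_k}\theta^{a_i}=a_{ik}\theta^{a_i}$ one gets $(Ap)_k=\theta_k\partial_{\theta_k}f/f=\theta_k\partial_{\theta_k}\log f$, so the score equations become
\[
\theta_k\,\frac{\partial_{\theta_k}f(\theta)}{f(\theta)}=b_k,\qquad k=1,\dots,d,
\]
where $b=Au/u_+$. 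The crucial point is that $\log f=\sum_{j}\log(1+\theta_j+\theta_j^2)$, so the left-hand side of the $k$-th equation depends only on $\theta_k$ and the system decouples into $d$ independent univariate equations.

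Next I would analyze a single decoupled equation. With $g(t)=1+t+t^2$, clearing denominators in $\theta_k g'(\theta_k)=b_k g(\theta_k)$ gives the quadratic $(2-b_k)\theta_k^2+(1-b_k)\theta_k-b_k=0$, whose discriminant $1+6b_k-3b_k^2$ is generically nonzero; this recovers $\mathrm{mldeg}(C_1)=2$. I would check that both roots lie in $\mathbb{C}^*$ (setting $\theta_k=0$ forces $b_k=0$) and that neither root is a zero of $g$: at a primitive cube root of unity $\omega$ one has $\omega g'(\omega)=\omega(1+2\omega)\neq 0=g(\omega)$, so the cleared equation cannot vanish there for any value of $b_k$. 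Hence each solution is a genuine critical point of the log-likelihood inside the torus. Because the full system is a product of $d$ such equations in disjoint variables, its solution set is the Cartesian product of the individual solution sets, giving exactly $2\cdot 2\cdots 2=2^d$ solutions, all with coordinates in $\mathbb{C}^*$.

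To conclude that this count equals the ML degree I need two genericity inputs. The monomial parametrization $\theta\mapsto(\theta^{a_i})_i$ is birational onto $V_{C_d}$ because the differences of exponent vectors in $\{0,1,2\}^d$ generate $\mathbb{Z}^d$ (they include all standard basis vectors), so critical points on the variety are in bijection with the solutions $\theta$ above; and since $A$ has rank $d$, a generic $u$ yields a point $b\in\mathbb{C}^d$ whose coordinates can be made algebraically independent, guaranteeing that each univariate quadratic has two distinct roots. Combining these, the number of complex critical points for generic $u$ is precisely $2^d$.

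The hard part will be the bookkeeping of genericity rather than any deep geometry: I must verify that a generic choice of $u$ really does make the $d$ values $b_k$ independently generic, so that none of the $2^d$ product solutions collide, drift onto the hypersurface $\{f=0\}$, or hit the coordinate hyperplanes, and that each parametric critical point contributes with multiplicity one to the ML degree. Once this is secured, the decoupling does all the work; moreover, the explicit quadratic for each $\theta_k$ furnishes the closed form of the MLE recorded in Proposition \ref{prop:critcube}.
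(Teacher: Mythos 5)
Your proof is correct and takes essentially the same approach as the paper: the paper derives the theorem from the product formula (Corollary \ref{cor:product}), whose proof is exactly your decoupling of the score equations after eliminating $s$, together with the base case $\textup{mldeg}(C_1)=2$ (Proposition \ref{prop:basecase}), and your univariate quadratic $(2-b_k)\theta_k^2+(1-b_k)\theta_k-b_k=0$ is, up to the translation from $\{-1,0,1\}$ to $\{0,1,2\}$ (i.e.\ $b_k \mapsto b_k+1$), precisely the one appearing in Lemma \ref{lemma15} and Proposition \ref{prop:critcube}. Your extra verifications --- roots lying in $(\mathbb{C}^*)^d$, avoidance of $\{f=0\}$, birationality of the monomial parametrization, and genericity of $b$ via the full rank of $A$ --- are the bookkeeping the paper leaves implicit, and they are correct and routine.
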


The quickest way to see this is to appeal to \cite[Theorem 5.5]{MaximumLikelihoodEstimationOfToricFanoVarieties} which states that the ML degree is multiplicative on a \emph{toric fiber product} \cite{sullivant2007toric}, of which the Cartesian product is a special case. We will proceed to state and give a self-contained proof of such weaker form of the theorem. This not only may be appreciated by the reader, but is also useful to us writing down the explicit score equations that we will refer to later.

\begin{corollary}\label{cor:product}(of \cite[Theorem 5.5]{MaximumLikelihoodEstimationOfToricFanoVarieties})
Let $P$ and $Q$ be lattice polytopes. Then
\begin{equation*}
\textup{mldeg} (P \times Q) = \textup{mldeg} (P) \cdot \textup{mldeg} (Q). 
\end{equation*}
\end{corollary}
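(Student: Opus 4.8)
The plan is to show that the product model $\mathcal{M}_{A_{P\times Q}}$ is precisely the \emph{independence model} built from $\mathcal{M}_{A_P}$ and $\mathcal{M}_{A_Q}$, and that under this identification both the score equations of Theorem \ref{theorem3} and the count of their complex solutions split as a product. Throughout I will work on the dense torus so that all marginals are invertible and the correspondences below are genuine bijections.

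First I would fix design matrices $A_P = [a_1,\dots,a_m]$ and $A_Q = [b_1,\dots,b_n]$ and observe that the lattice points of $P\times Q$ are exactly the concatenated vectors $(a_i,b_j)\in\mathbb{Z}^{d_1+d_2}$. Hence the design matrix $A_{P\times Q}$ has columns indexed by pairs $(i,j)$, with its $P$-rows depending only on $i$, its $Q$-rows only on $j$, together with a single all-ones row. Consequently $\log p\in\textup{rowspan}(A'_{P\times Q})$ forces $\log p_{ij}=f(i)+g(j)$, i.e. $p_{ij}=\alpha_i\beta_j$ with $\alpha\in\mathcal{M}_{A_P}$ and $\beta\in\mathcal{M}_{A_Q}$. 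Equivalently, $V_{P\times Q}$ is the Segre product of $V_P$ and $V_Q$, which is isomorphic to $V_P\times V_Q$; the inverse map $p\mapsto(\alpha,\beta)$ is recovered by taking marginals $\alpha_i=\sum_j p_{ij}$ and $\beta_j=\sum_i p_{ij}$ (using $\sum_i\alpha_i=\sum_j\beta_j=1$).

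Next I would write the score equations for $P\times Q$ and data $u=(u_{ij})$ explicitly and split $A_{P\times Q}\,p = \frac{1}{u_+}A_{P\times Q}\,u$ into its $P$-rows and $Q$-rows. Using $\sum_j\beta_j=\sum_i\alpha_i=1$, the $P$-rows collapse to $\sum_i a_i\,\alpha_i = \frac{1}{u^P_+}\sum_i a_i\,u^P_i$ with marginal data $u^P_i=\sum_j u_{ij}$, and symmetrically the $Q$-rows give Birch's equations for $Q$ with $u^Q_j=\sum_i u_{ij}$ (note $u_+=u^P_+=u^Q_+$). Thus a solution $p\in V_{P\times Q}$ corresponds under the bijection above to a pair $(\alpha,\beta)$ where $\alpha$ solves the score equations for $\mathcal{M}_{A_P}$ with data $u^P$ and $\beta$ solves those for $\mathcal{M}_{A_Q}$ with data $u^Q$, independently of one another. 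The number of complex solutions therefore factors as a product of the two factor counts.

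The main obstacle is the genericity bookkeeping: $\textup{mldeg}(P)$ and $\textup{mldeg}(Q)$ are solution counts for \emph{generic} data, so I must argue that generic product data $u$ yields simultaneously generic marginals $u^P$ and $u^Q$. I would handle this by noting that the marginalization maps $u\mapsto u^P$ and $u\mapsto u^Q$ are surjective linear maps, hence dominant; the preimages of the dense open sets on which the factor counts attain $\textup{mldeg}(P)$ and $\textup{mldeg}(Q)$ are therefore dense open in the space of product data, and their intersection is again dense, open, and nonempty. On this locus both factor counts equal the respective ML degrees, so for generic $u$ the number of solutions to the product score equations is $\textup{mldeg}(P)\cdot\textup{mldeg}(Q)$, as claimed.
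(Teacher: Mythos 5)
Your proof is correct, but it takes a genuinely different route from the paper's. The paper works entirely in the toric parametrization: it writes the score equations of $P\times Q$ in the variables $(s,\theta_P,\theta_Q)$, where each equation is a product of Laurent-polynomial factors in $\theta_P$ and $\theta_Q$ separately, solves the first equation for the normalizing parameter $s$, and observes that substitution decouples the system into the score equations of $P$ and of $Q$. You instead argue intrinsically on the variety: you identify $V_{P\times Q}$ as the Segre product of $V_P$ and $V_Q$ (the independence model), factor each torus solution as $p_{ij}=\alpha_i\beta_j$, and split Birch's equations $A_{P\times Q}\,p=\frac{1}{u_+}A_{P\times Q}\,u$ into the two marginal Birch systems with data $u^P$ and $u^Q$. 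The decoupling phenomenon is the same, but the mechanism differs: parametric elimination of $s$ versus intrinsic factorization with marginal normalization. Your version has two advantages the paper's does not: it is parametrization-free and makes the statistical content (independence, sufficiency of the marginal counts) transparent, and it supplies the genericity bookkeeping --- that generic product data $u$ has simultaneously generic marginals $u^P$ and $u^Q$, via dominance of the marginalization maps --- which the paper's proof silently assumes by writing $b_P$ and $b_Q$ as if they were independent generic right-hand sides. Conversely, the paper's explicit parametric form is not wasted effort there: the displayed product score equations are reused later (Lemma \ref{lemma15}, Theorem \ref{theorem18}), which your intrinsic argument would not produce. One small point worth making explicit in your write-up: the factorization $p_{ij}=\alpha_i\beta_j$ is only unique up to the rescaling $(\lambda\alpha,\lambda^{-1}\beta)$, and your normalization $\sum_i\alpha_i=\sum_j\beta_j=1$ is available precisely because the all-ones row of the product system forces $\bigl(\sum_i\alpha_i\bigr)\bigl(\sum_j\beta_j\bigr)=1$, so both sums are nonzero on the solution locus; with that one line added, your bijection between product solutions and pairs of factor solutions is airtight.
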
 

\begin{proof}
Let $P$ be a $d_1$-dimensional lattice polytope with design matrix $P = [p_1, \ldots, p_n] \in \mathbb{Z}^{d_1 \times n}$ and let $Q$ be a $d_2$-dimensional lattice polytope with design matrix $Q = [q_1, \ldots, q_m] \in \mathbb{Z}^{d_2 \times m}$. Here, denote $P_k$ and $Q_k$ the $k$-th row of $P$ and $Q$, respectively. Set $b_P = \frac{1}{u_+} Pu$ and $b_Q = \frac{1}{u_+} Qu$. Consider 
\begin{align*}
\psi_P : (\mathbb{C}^*)^{d_1} \to (\mathbb{C}^*)^n, \quad \theta_P \mapsto (s \theta_P^{p_1}, \ldots, s \theta_P^{p_n}), \\
\psi_Q : (\mathbb{C}^*)^{d_2} \to (\mathbb{C}^*)^m, \quad \theta_Q \mapsto (s \theta_Q^{q_1}, \ldots, s \theta_Q^{q_n}).
\end{align*}

The score equations of $P$ are
$$
s \sum_{i=1}^n \theta_P^{p_i} = 1, \qquad
s \cdot P_k \cdot (\theta_P^{p_1}, \ldots, \theta_P^{p_n}) = b_{P,k}, \quad k \in [d_1],
$$
and have $\textup{mldeg}(P)$ solutions. Analogously the score equations of $Q$ are 
$$
s \sum_{i=1}^n \theta_Q^{q_i} = 1, \qquad
s \cdot Q_k \cdot (\theta_Q^{q_1}, \ldots, \theta_Q^{q_n}) = b_{Q,k}, \quad k \in [d_2],
$$
and have $\textup{mldeg}(Q)$ solutions. The score equations of $P \times Q$ are given by
\begin{align*}
s \left(\sum_{i=1}^n \theta_P^{p_i}\right) \left(\sum_{i=1}^n \theta_Q^{q_i}\right) &= 1, \\
s \cdot P_k \cdot (\theta_P^{p_1}, \ldots, \theta_P^{p_n}) \left(\sum_{i=1}^n \theta_Q^{q_i}\right)&= b_{P,k}, \quad k \in [d_1], \\
s \cdot Q_k \cdot (\theta_Q^{q_1}, \ldots, \theta_Q^{q_n}) \left(\sum_{i=1}^n \theta_P^{p_i}\right) &= b_{Q,k}, \quad k \in [d_2].
\end{align*}
After solving for $s$ in the first equation for each system, substituting these values of $s$ results in the score equations of $P$ and $Q$, i.e. the system has $ \textup{mldeg} (P) \cdot \textup{mldeg} (Q)$ solutions.
\end{proof}

To apply the corollary, the base case is the one-dimensional cube $C_1=[-1,1]$. While $C_d$ exhibits an ML degree drop for $d \ge 2$, the same is not true for $d=1$, even after considering possible dilates.

\begin{proposition}\label{prop:basecase}
For $t\ge 1$, $tC_1$ exhibits no ML degree drop. In fact, $\textup{deg}(t C_1) = \textup{mldeg}(t C_1) = 2t$.
\end{proposition}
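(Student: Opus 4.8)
The plan is to reduce to the one-row design matrix and then count solutions to the score equations directly. After translating $tC_1=[-t,t]$ so that all lattice points are nonnegative, its design matrix is the single row $A=\begin{bmatrix}0 & 1 & 2 & \cdots & 2t\end{bmatrix}\in\mathbb{Z}^{1\times(2t+1)}$, with $n=2t+1$ lattice points. The degree is then immediate from Theorem \ref{theorem6}: the normalized volume of the interval $[0,2t]$ is simply its length, so $\deg(tC_1)=1!\cdot\mathrm{vol}([0,2t])=2t$.

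For the ML degree I would write the score equations explicitly, exactly as set up in the proof of Corollary \ref{cor:product}. With the parametrization $\theta\mapsto(s,s\theta,\dots,s\theta^{2t})$ and $b=\tfrac{1}{u_+}Au$, Birch's theorem (Theorem \ref{theorem3}) gives
\begin{equation*}
s\sum_{i=0}^{2t}\theta^i=1,\qquad s\sum_{i=0}^{2t} i\,\theta^i=b.
\end{equation*}
Eliminating $s$ yields the single univariate equation $\sum_{i=0}^{2t}(i-b)\,\theta^i=0$, a polynomial of degree $2t$ in $\theta$.

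It then remains to count its solutions in $(\mathbb{C}^*)$. For generic data the leading coefficient $2t-b$ and the constant term $-b$ are both nonzero, so none of the $2t$ roots is $0$; and the roots are distinct because the discriminant of this polynomial is a nonzero polynomial in $b$. Each admissible $\theta$ then determines $s$ uniquely from the first equation, giving $\textup{mldeg}(tC_1)=2t$. Together with $\deg(tC_1)=2t$ and the bound \eqref{gl2}, this shows there is no ML degree drop.

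As a cross-check one may instead invoke Theorem \ref{thm:Adet}. The only faces of the segment are its two vertices, whose discriminants are trivially $1$, and the segment itself, so $E_A(c)=\Delta_A(c)$. At the standard scaling the associated polynomial is $f(\theta)=\sum_{i=0}^{2t}\theta^i=(\theta^{2t+1}-1)/(\theta-1)$, whose roots are the nontrivial $(2t+1)$-th roots of unity, all simple; hence $f$ has no repeated root, $\Delta_A(1,\dots,1)\neq 0$, and $E_A\neq 0$. The only genuinely nontrivial point in either route is the genericity claim — that the $2t$ critical values of $\theta$ are distinct and nonzero — which the discriminant computation, or equivalently the simplicity of the roots of unity, settles cleanly.
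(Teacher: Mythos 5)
Your proposal is correct, and your concluding ``cross-check'' is in fact the paper's own proof: the paper computes $\deg(tC_1)=2t$ from the normalized volume and then shows that $f(\theta)=1+\theta+\cdots+\theta^{2t}$ has $2t$ distinct roots, namely the nontrivial $(2t+1)$-th roots of unity, so $f$ and $f'$ share no root, $f$ has no singularity in $\mathbb{C}^*$, and there is no drop. Your primary route, eliminating $s$ from the score equations and counting roots of $g_b(\theta)=\sum_{i=0}^{2t}(i-b)\,\theta^i$, is a genuinely more hands-on alternative, but as written it leaves two small debts. First, the claim that the discriminant of $g_b$ is a nonzero polynomial in $b$ is asserted rather than verified; second, a root $\theta$ of $g_b$ yields a critical point only if $f(\theta)\neq 0$, since otherwise $s\,f(\theta)=1$ is unsolvable, so you must also rule out roots of $g_b$ lying on the zero locus of $f$. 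Both debts are settled at once by the identity $g_b(\theta)=\theta f'(\theta)-b\,f(\theta)$: a common root of $g_b$ and $f$ in $\mathbb{C}^*$ would force $f'(\theta)=0$ as well, impossible because the roots of $f$ are simple, and since the numerator and denominator of the degree-$2t$ rational map $\theta\mapsto \theta f'(\theta)/f(\theta)$ are coprime, its generic fiber consists of $2t$ distinct points, which is exactly the distinctness you need. So the simplicity of the roots of unity --- the one computation in the paper's proof --- is what powers both of your routes; the elimination route buys you the explicit univariate critical equation (in the spirit of Proposition \ref{prop:critcube} and Theorem \ref{theorem18}), while the route via Theorem \ref{thm:Adet} is shorter and requires no genericity bookkeeping in $b$.
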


\begin{proof}
The $t$-th dilate of $C_1$ is $[-t,t]$. After a translation, the design matrix is given by
\begin{equation*}
A = \begin{bmatrix}
0 & 1 & 2 & \cdots & 2t
\end{bmatrix}.
\end{equation*}
In dimension one, $\textup{nvol}(tC_1) = \textup{vol}(tC_1) = 2t$. Furthermore, $A$ defines the polynomial
\begin{equation*}
f = 1 + \theta_1 + \theta_1^2 + \ldots + \theta_1^{2t}.
\end{equation*}
By the fundamental theorem of algebra, $f$ has $2t$ complex roots (counted with multiplicity). Let
\begin{equation*}
\xi_{2k+1}(l) = \exp \left(\frac{2 \pi \mathrm{i} l}{2t+1}\right) \quad \textup{for } l \in [2t].
\end{equation*}
The $\xi_{2t+1}(l)$ are pairwise different and $1 + \xi_{2t+1} + \xi_{2t+1}^2 + \ldots + \xi_{2t+1}^{2t} = 0$ for all $l \in [2t]$. Thus $f$ has $2t$ distinct roots. It is well known that a polynomial $p$ of degree $m$ has $m$ distinct roots if and only if $p$ and its derivative have no roots in common. Therefore, $f$ has no singularities and $\textup{mldeg}(t C_1) = 2t$.
\end{proof}

\begin{proof}(of Theorem \ref{thm:mldegcube})
Combining Corollary \ref{cor:product} and Proposition \ref{prop:basecase} we have
\begin{equation*}
\textup{mldeg}(\mathrm{C}_d) = \textup{mldeg}(C_1^d) =  \textup{mldeg}(C_1)^d = 2^d. \qedhere
\end{equation*}
\end{proof}

While this settles the formula for the ML degree of the $d$-cube, we get some insight by analyzing explicitly the score equations and their solutions.

Let $A_d$ be the design matrix defined by $C_d$. For $d>1$, the recursive construction 
\begin{equation} \label{gl3}
A_1 = \begin{bmatrix}
-1 & 0 & 1 \\
\end{bmatrix}, \quad
A_d = \begin{bmatrix}
-1 & 0 & 1 \\
A_{d-1} & A_{d-1} & A_{d-1} \\
\end{bmatrix} \in \mathbb{Z}^{d \times 3^d}
\end{equation}
holds. We are looking at the solutions of 
\begin{equation} \label{gl4}
A'_d \psi(s, \theta) = \frac{1}{u_+} A'_d u
\end{equation}
for generic data $u$. For simplicity, we set
\begin{equation*}
b \coloneqq \frac{1}{u_+} A_d u.
\end{equation*}

Then, by induction on the construction of $A_d$, we have the following score equations.
\begin{lemma}\label{lemma15}
The first equation of \textup{(\ref{gl4})} is of the form
\begin{equation*}
s \prod_{i \in [d]} (1+\theta_i+\theta_{i}^{-1}) = 1.
\end{equation*}
For $k \in [d]$, the $(k+1)$-th equation of \textup{(\ref{gl4})} is of the form
\begin{equation*}
s (\theta_k-\theta_k^{-1}) \prod_{i \in [d]\setminus \{k\}} (1+\theta_i+\theta_i^{-1}) = b_k.
\end{equation*}
\end{lemma}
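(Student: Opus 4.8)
The plan is to exploit the Cartesian-product structure of the cube. By the recursion (\ref{gl3}), the columns of $A_d$ are exactly the $3^d$ lattice points of $C_d=[-1,1]^d$, i.e.\ all vectors $a\in\{-1,0,1\}^d$. Under the standard scaling, the left-hand side of the first row of (\ref{gl4}) is $s\sum_{a}\theta^a$ and the left-hand side of the $(k+1)$-th row is $s\sum_{a} a_k\,\theta^a$; the right-hand sides are $\frac{1}{u_+}\sum_j u_j=1$ and $b_k=\frac{1}{u_+}(A_d u)_k$, respectively. So the whole statement reduces to factoring the two exponential sums $\sum_a\theta^a$ and $\sum_a a_k\theta^a$ over the product set $\{-1,0,1\}^d$.

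First I would record the two one-variable identities that drive the factorization, namely $\sum_{m\in\{-1,0,1\}}\theta_i^m = 1+\theta_i+\theta_i^{-1}$ and $\sum_{m\in\{-1,0,1\}} m\,\theta_i^m = \theta_i-\theta_i^{-1}$. The claim is then that $\sum_a\theta^a=\prod_{i\in[d]}(1+\theta_i+\theta_i^{-1})$, and, since $a_k$ depends only on the $k$-th coordinate, $\sum_a a_k\theta^a=(\theta_k-\theta_k^{-1})\prod_{i\neq k}(1+\theta_i+\theta_i^{-1})$. This is just the distributive law asserting that the product of the coordinatewise sums equals the sum over the product index set, and it is precisely what the block recursion (\ref{gl3}) is designed to prove by induction on $d$.

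For the induction I would write $A_d$ as three horizontal blocks in which the newly added top row takes the constant values $-1,0,1$ across the blocks while the bottom part is a copy of $A_{d-1}$ realized in the variables $\theta_2,\ldots,\theta_d$. Splitting the exponential sum according to these blocks gives $\sum_{a\in A_d}\theta^a = (\theta_1^{-1}+1+\theta_1)\sum_{a'\in A_{d-1}}\prod_{i=2}^{d}\theta_i^{a'_i}$, and the inductive hypothesis applied to $A_{d-1}$ finishes the first equation. For the $(k+1)$-th equation I would separate two cases: if $k=1$ is the new coordinate, the block weights $-1,0,1$ produce the factor $\theta_1-\theta_1^{-1}$ while the $A_{d-1}$ block contributes the remaining $(1+\theta_i+\theta_i^{-1})$ factors; if $k\ge 2$ lies in the $A_{d-1}$ block, the top row contributes $(1+\theta_1+\theta_1^{-1})$ and the inductive hypothesis supplies the rest. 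The base case $d=1$ is the direct check that $A_1=[-1,0,1]$ yields $s(1+\theta_1+\theta_1^{-1})=1$ and $s(\theta_1-\theta_1^{-1})=b_1$.

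There is no serious obstacle here; the content is a single factorization and the only delicate point is bookkeeping --- keeping the variable relabeling in the recursion consistent, and confirming that the all-ones top row of $A'_d$ forces the right-hand side of the first equation to equal $1$ rather than a generic constant (because $\sum_j u_j=u_+$). I would also note for later use that the factorization is an identity in $\theta\in(\mathbb{C}^\ast)^d$, so it holds before imposing any genericity on $u$, which is what makes it convenient for the explicit analysis of the solutions in the sequel.
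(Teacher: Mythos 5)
Your proposal is correct and takes essentially the same route as the paper: the paper gives no detailed proof, stating only that the lemma follows ``by induction on the construction of $A_d$'' in (\ref{gl3}), and your block decomposition of $A_d$ with the two one-variable identities $\sum_{m\in\{-1,0,1\}}\theta_i^m=1+\theta_i+\theta_i^{-1}$ and $\sum_{m}m\,\theta_i^m=\theta_k-\theta_k^{-1}$ is precisely that induction, carried out in full. The case split on whether $k$ is the new coordinate or lies in the $A_{d-1}$ block, and the observation that the all-ones row of $A'_d$ forces the right-hand side $1$, are exactly the bookkeeping the paper leaves implicit.
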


\begin{proposition}\label{prop:critcube}
With the notation above, the $2^d$ critical points for generic data corresponding to $\textup{mldeg}(\mathrm{C}_d)=2^d$ are given for each $k \in [d]$ by the two possible values
\begin{equation} \label{gl5}
\theta_k = \frac{\pm \sqrt{4-3b_k^2}-b_k}{2(b_k-1)}.
\end{equation}
\end{proposition}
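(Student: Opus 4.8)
The plan is to decouple the multivariate score system from Lemma \ref{lemma15} into $d$ independent univariate equations, one per coordinate $\theta_k$, and then solve each one. The key observation is that on the torus both $s$ and the full product $\prod_{i\in[d]}(1+\theta_i+\theta_i^{-1})$ occurring in the first equation are nonzero, so I may legitimately divide the $(k+1)$-th equation of Lemma \ref{lemma15} by the first. The factor $s$ cancels, and the partial product $\prod_{i\neq k}(1+\theta_i+\theta_i^{-1})$ cancels against all but one factor of the full product, leaving the single relation
\begin{equation*}
\frac{\theta_k-\theta_k^{-1}}{1+\theta_k+\theta_k^{-1}}=b_k,
\end{equation*}
which involves only $\theta_k$. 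This is the crux of the argument: each coordinate is pinned down independently of the others, and the scalar $s$ is afterwards recovered from the first equation.

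Next I would clear denominators. Multiplying numerator and denominator by $\theta_k$ rewrites the relation as $\theta_k^2-1=b_k(\theta_k^2+\theta_k+1)$, that is, the quadratic
\begin{equation*}
(1-b_k)\theta_k^2-b_k\theta_k-(1+b_k)=0.
\end{equation*}
Applying the quadratic formula and simplifying the discriminant via $b_k^2+4(1-b_k)(1+b_k)=4-3b_k^2$ gives $\theta_k=\bigl(b_k\pm\sqrt{4-3b_k^2}\bigr)/\bigl(2(1-b_k)\bigr)$, which after multiplying through by $-1$ is exactly the claimed expression \eqref{gl5}. Since the two signs are chosen independently across the $d$ coordinates, this produces $2^d$ candidate tuples $(\theta_1,\dots,\theta_d)$.

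Finally I would confirm that these candidates are genuinely $2^d$ distinct critical points in the torus, matching Theorem \ref{thm:mldegcube}. For generic data each $b_k$ avoids the finitely many exceptional values $b_k=1$ (where the quadratic degenerates), $b_k=-1$ (where a root vanishes, since the product of roots equals $(1+b_k)/(b_k-1)$), and $4-3b_k^2=0$ (a repeated root). Hence each quadratic has two distinct nonzero roots, and the $2^d$ sign choices yield distinct points of $(\mathbb{C}^*)^d$. I would also note that no such root can be a primitive cube root of unity: a zero of $\theta^2+\theta+1$ would force the right-hand side $b_k(\theta_k^2+\theta_k+1)$ to vanish while $\theta_k^2-1$ does not, leading to $\theta_k=-2$, a contradiction; therefore $1+\theta_i+\theta_i^{-1}\neq 0$ for every coordinate and $s$ is uniquely determined. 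The main obstacle is not any single computation but the genericity bookkeeping in this last step: one must verify that the excluded values of the $b_k$ cut out a proper subvariety, so that generic $u$ indeed yields $2^d$ torus solutions, and that the decoupling-by-division has not silently discarded critical points lying off the torus.
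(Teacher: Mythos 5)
Your proposal is correct and takes essentially the same route as the paper: eliminating $s$ via the first score equation (you divide by it, the paper substitutes $s = 1/\prod_{i\in[d]}(1+\theta_i+\theta_i^{-1})$) decouples the system into the same univariate relation per coordinate, yielding the identical quadratic $(b_k-1)\theta_k^2 + b_k\theta_k + (b_k+1) = 0$ whose roots are \eqref{gl5}. Your extra genericity bookkeeping (excluding $b_k = \pm 1$, $4-3b_k^2 = 0$, and ruling out roots of $\theta_k^2+\theta_k+1$ so the denominators are nonzero and the $2^d$ points are distinct in the torus) is sound and merely makes explicit what the paper leaves implicit.
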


\begin{proof}
By Lemma \ref{lemma15}, 
\begin{equation*}
s = \frac{1}{\prod_{i \in [d]} (1+\theta_i+\theta_{i}^{-1})},
\end{equation*} 
i.e. $s$ is uniquely determined by $\theta_1, \ldots, \theta_d$. Substituting $s$ in equation $k+1$ for all $k \in [d]$ gives the system
\begin{equation*}
\frac{1}{1+\theta_k+\theta_k^{-1}} (\theta_k-\theta_k^{-1}) = b_k, \quad k \in [d]. 
\end{equation*}
Multiplying by $\theta_k$ gives the following quadratic equation whose roots give the desired values:
\begin{equation*}
    (b_k-1)\theta_k^2 + b_k \theta_k + (b_k+1) = 0. \qedhere 
\end{equation*}
\end{proof}

According to \citep[Proposition 3.3]{FamiliesOfPolytopesWithRationalLinearPrecisionInHigherDimensions}, a suitable scaling can be found such that the model associated to $C_3$ has ML degree one. We extend this result by introducing a family of ML degree one scalings for $C_d$, $d\geq 3$. 

Let $c_{k,-1}, c_{k,0}, c_{k,1} \in \mathbb{C}^*$ for $k \in [d]$. Consider the scaling $c \in (\mathbb{C}^*)^n$ given by the product form
\begin{equation} \label{gl6}
c_i = \prod_{k \in [d]} c_{k, a_{ki}},
\end{equation}
where $a_{ki}$ is the $(k,i)$-entry of $A_d \in \mathbb{Z}^{d \times n}$. The general version of Lemma \ref{lemma15} for a scaled model is

\begin{lemma} \label{lemma17}
For $k \in [d]$, the $k$-th equation of $A_d \psi^c(s,\theta) = b$ is of the form
\begin{equation*}
s (c_{k,1} \theta_k- c_{k,-1}\theta_k^{-1}) \prod_{i \in [d]\setminus \{k\}} (c_{i,0} + c_{i,1} \theta_i + c_{i,-1} \theta_{i}^{-1}) = b_k.
\end{equation*}
For $c \in (\mathbb{C}^*)^n$ as in \textup{(\ref{gl6})}, the last equation of $A_d \psi^c(s,\theta) = b$ is of the form
\begin{equation*}
s \prod_{i \in [d]} ( c_{i,0} + c_{i,1} \theta_i + c_{i,-1} \theta_{i}^{-1}) = 1.
\end{equation*}
\end{lemma}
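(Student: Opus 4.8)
The plan is to exploit the product structure of both the design matrix $A_d$ and the scaling $c$, so that each coordinate of the left-hand side of $A'_d\psi^c(s,\theta)=\tfrac{1}{u_+}A'_du$ separates into a product over the $d$ coordinate axes. The key preliminary observation is that, by the recursion \eqref{gl3}, the columns of $A_d$ enumerate \emph{exactly} the set $\{-1,0,1\}^d$: the first row contributes the three values $-1,0,1$, and an induction on $d$ shows the lower $d-1$ rows run over all of $\{-1,0,1\}^{d-1}$. Hence each lattice point $a_i$ is a vector $v\in\{-1,0,1\}^d$, and after substituting the product form \eqref{gl6} the $i$-th coordinate of $\psi^c$ becomes
\begin{equation*}
c_i\,s\,\theta^{a_i}=s\prod_{k\in[d]}c_{k,v_k}\theta_k^{v_k}.
\end{equation*}

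First I would treat the normalization equation coming from the all-ones row of $A'_d$. Summing the coordinates of $\psi^c$ and applying the distributive law across the $d$ independent factors gives
\begin{equation*}
s\sum_{v\in\{-1,0,1\}^d}\ \prod_{k\in[d]}c_{k,v_k}\theta_k^{v_k}=s\prod_{k\in[d]}\bigl(c_{k,-1}\theta_k^{-1}+c_{k,0}+c_{k,1}\theta_k\bigr),
\end{equation*}
which, set equal to $1$, is precisely the second displayed equation of the statement. Next I would handle the equation arising from the $k$-th row of $A_d$, which weights the $i$-th summand by the entry $a_{ki}=v_k$. The same separation applies, except that the single factor indexed by $k$ is replaced by $\sum_{v_k\in\{-1,0,1\}}v_k\,c_{k,v_k}\theta_k^{v_k}=c_{k,1}\theta_k-c_{k,-1}\theta_k^{-1}$, since the $v_k=0$ term vanishes and the $v_k=\pm1$ terms carry the sign. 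Factoring out $s$ together with the remaining $d-1$ factors $\prod_{i\in[d]\setminus\{k\}}(c_{i,0}+c_{i,1}\theta_i+c_{i,-1}\theta_i^{-1})$ and equating to $b_k$ yields the first displayed equation. Specializing all $c_{k,\cdot}=1$ recovers Lemma~\ref{lemma15}, which is a useful consistency check.

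Since this is a direct separation of variables, there is no deep obstacle; the argument's content is entirely in the factorization $\sum_{v}\prod_k f_k(v_k)=\prod_k\sum_{v_k}f_k(v_k)$, valid because the summand separates across axes, with the row-$k$ weighting disturbing only the $k$-th factor. The one point requiring care is the bookkeeping: verifying that the product form \eqref{gl6} of the scaling is exactly what is needed for the three coefficients $c_{k,-1},c_{k,0},c_{k,1}$ to assemble into the univariate trinomial in each factor, and confirming that the columns of $A_d$ traverse $\{-1,0,1\}^d$ without repetition. Alternatively, and in keeping with the proof of Lemma~\ref{lemma15}, one may run the whole computation as an induction on the construction \eqref{gl3}: peeling off the top row splits each coordinate sum into three blocks indexed by the new variable's value $-1,0,1$, and the inductive hypothesis identifies the common $(d-1)$-fold product, after which the new variable contributes either the factor $c_{d,0}+c_{d,1}\theta_d+c_{d,-1}\theta_d^{-1}$ or the factor $c_{d,1}\theta_d-c_{d,-1}\theta_d^{-1}$ according to whether the equation being analyzed belongs to a lower row or to the newly introduced one.
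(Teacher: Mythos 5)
Your proof is correct and takes essentially the same approach as the paper: the paper offers no separate argument for this lemma, presenting it as the scaled analogue of Lemma~\ref{lemma15}, which it justifies by induction on the recursive construction \eqref{gl3}. Your distributive-law computation over the columns of $A_d$ (correctly identified as enumerating $\{-1,0,1\}^d$ exactly once) is precisely that induction unrolled, and you even note the inductive variant explicitly, so the two arguments coincide in substance.
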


\begin{theorem} \label{theorem18}
A scaled model of $C_d$ has ML degree one if $c_{k,-1} = \frac{c_{k,0}^2}{4 c_{k,1}}$ for all $k \in [d]$.
\end{theorem}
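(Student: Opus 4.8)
The plan is to start from the explicit score equations recorded in Lemma~\ref{lemma17} and to show that, under the proposed scaling, the system decouples variable by variable and that each single-variable equation degenerates from a quadratic to a linear one. Note that the condition $c_{k,-1}=c_{k,0}^2/(4c_{k,1})$ is exactly the vanishing of the discriminant $c_{k,0}^2-4c_{k,1}c_{k,-1}$ of the edge polynomial $c_{k,1}\theta_k^2+c_{k,0}\theta_k+c_{k,-1}$, which is what will force the degeneration.

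First I would solve for $s$ from the last equation of Lemma~\ref{lemma17}, obtaining
\[
s = \Big(\prod_{i\in[d]}\big(c_{i,0}+c_{i,1}\theta_i+c_{i,-1}\theta_i^{-1}\big)\Big)^{-1}.
\]
This is legitimate because the right-hand side of that equation equals $1$, so the product cannot vanish at any genuine solution; in particular each factor $c_{k,0}+c_{k,1}\theta_k+c_{k,-1}\theta_k^{-1}$ must be nonzero. Substituting this $s$ into the $k$-th equation cancels all factors with index $i\neq k$ and leaves, for every $k\in[d]$, the single-variable equation
\[
\frac{c_{k,1}\theta_k - c_{k,-1}\theta_k^{-1}}{\,c_{k,0}+c_{k,1}\theta_k+c_{k,-1}\theta_k^{-1}\,} = b_k .
\]
Thus the system separates into $d$ independent equations, one for each $\theta_k$, and the ML degree is the product over $k$ of the number of admissible solutions.

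Next I would clear $\theta_k^{-1}$ by multiplying numerator and denominator by $\theta_k$ and insert the hypothesis $c_{k,-1}=c_{k,0}^2/(4c_{k,1})$. The denominator becomes the perfect square $c_{k,1}\big(\theta_k+\tfrac{c_{k,0}}{2c_{k,1}}\big)^2$, while the numerator factors as $c_{k,1}\big(\theta_k-\tfrac{c_{k,0}}{2c_{k,1}}\big)\big(\theta_k+\tfrac{c_{k,0}}{2c_{k,1}}\big)$. Cancelling the common factor reduces the equation to the linear-fractional relation
\[
\frac{\theta_k-\tfrac{c_{k,0}}{2c_{k,1}}}{\theta_k+\tfrac{c_{k,0}}{2c_{k,1}}} = b_k ,
\]
whose unique solution is $\theta_k=\dfrac{c_{k,0}(1+b_k)}{2c_{k,1}(1-b_k)}$, finite and nonzero for generic data since then $b_k\neq\pm1$. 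As each $\theta_k$ is uniquely determined and $s$ is then fixed, the system has exactly one solution and the scaled model has ML degree one.

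The step that needs the most care is the cancellation. A priori the quadratic $c_{k,1}\theta_k^2+c_{k,0}\theta_k+c_{k,-1}$ could contribute a second root, and I must rule out the cancelled value $\theta_k=-c_{k,0}/(2c_{k,1})$ as a genuine solution rather than merely discarding it. This is exactly where the observation from the second step is used: substituting $\theta_k=-c_{k,0}/(2c_{k,1})$ together with the scaling into the $k$-th factor gives $c_{k,0}-\tfrac{c_{k,0}}{2}-\tfrac{c_{k,0}}{2}=0$, so that factor vanishes and the normalization $s\prod_{i}(\cdots)=1$ cannot hold. Hence the cancelled root corresponds to no point of the model, the reduced linear equation accounts for all admissible solutions, and the ML degree is indeed one.
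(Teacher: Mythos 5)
Your proof is correct and follows essentially the same route as the paper: both solve for $s$ from the normalization in Lemma~\ref{lemma17}, decouple the score equations into $d$ univariate rational equations, and show that the scaling $c_{k,-1}=c_{k,0}^2/(4c_{k,1})$ collapses each to the single admissible solution $\theta_k=\frac{(b_k+1)\,c_{k,0}}{2c_{k,1}(1-b_k)}$. If anything, your factor-and-cancel argument --- in particular the explicit check that the cancelled root $\theta_k=-c_{k,0}/(2c_{k,1})$ annihilates the denominator factor and hence violates the normalization $s\prod_i(\cdot)=1$ --- is more careful than the paper's quadratic-formula computation, which silently discards that same spurious branch (under the scaling the cleared quadratic in $\theta_k$ still has two distinct roots, its discriminant being $c_{k,0}^2\neq 0$).
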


\begin{proof}
From Lemma \ref{lemma17}, we have the equations 
\begin{equation} \label{gl7}
\frac{1}{c_{k,0} + c_{k,1} \theta_k + c_{k,-1} \theta_k^{-1}} (c_{k,1} \theta_k - c_{k,-1} \theta_k^{-1}) = b_k
\end{equation}
Equation (\ref{gl7}) has solutions
\begin{equation*}
\theta_k = \frac{\pm \sqrt{(c_{k,0}^2 - 4 c_{k,1} c_{k,-1})b_k^2 + 4 c_{k,1} c_{k,-1}} -c_{k,0} b_k}{2 c_{k,1} (b_k - 1)}.
\end{equation*}
The discriminant is given by $\Delta_{b_k} = 16 c_{k,1} c_{k,-1} ( 4 c_{k,1} c_{k,-1} - c_{k,0}^2)$. It holds that $\Delta_{b_k} = 0$ if and only if $c_{k,-1} = \frac{c_{k,0}^2}{4 c_{k,1}}$ and  $c_{k,1} \neq 0 $. This gives a family of ML degree one scalings since each of these equations has generically one solution
\begin{equation}\label{eq:rational} 
\theta_k = \frac{(b_k+1)c_{k,0}}{2 c_{k,1} (1-b_k)}. \qedhere
\end{equation}
\end{proof}

We illustrate the choice of an ML degree one scaling for the 3-cube using Theorem \ref{theorem18}.

\begin{example}[3-cube] \upshape \label{example20}
The design matrix of $C_3$ is $A_3 \in \mathbb{Z}^{4 \times 27}$ defined in (\ref{gl3}).The components of $\psi^c(s, \theta)$ are given by 
\begin{align*}
p_1 &= c_1 s \theta_1^{-1} \theta_2^{-1} \theta_3^{-1}, \quad &p_{10} &= c_{10} s \theta_2^{-1} \theta_3^{-1}, \quad &p_{19} &= c_{19} s \theta_1 \theta_2^{-1} \theta_3^{-1}, \\
p_2 &= c_2 s \theta_1^{-1} \theta_2^{-1}, \quad &p_{11} &= c_{11} s \theta_2^{-1}, \quad &p_{20} &= c_{20} s \theta_1 \theta_2^{-1}, \\
p_3 &= c_3 s \theta_1^{-1} \theta_2^{-1} \theta_3, \quad &p_{12} &= c_{12} s \theta_2^{-1} \theta_3, \quad &p_{21} &= c_{21} s \theta_1 \theta_2^{-1} \theta_3, \\
p_4 &= c_4 s \theta_1^{-1} \theta_3^{-1}, \quad &p_{13} &= c_{13} s \theta_3^{-1}, \quad &p_{22} &= c_{22} s \theta_1 \theta_3^{-1}, \\
p_5 &= c_5 s \theta_1^{-1}, \quad &p_{14} &= c_{14} s, \quad &p_{23} &= c_{23} s \theta_1, \\
p_6 &= c_6 s \theta_1^{-1} \theta_3, \quad &p_{15} &= c_{15} s \theta_3, \quad &p_{24} &= c_{24} s \theta_1 \theta_3, \\
p_7 &= c_7 s \theta_1^{-1} \theta_2 \theta_3^{-1}, \quad &p_{16} &= c_{16} s \theta_2 \theta_3^{-1}, \quad &p_{25} &= c_{25} s \theta_1 \theta_2 \theta_3^{-1}, \\
p_8 &= c_8 s \theta_1^{-1} \theta_2, \quad &p_{17} &= c_{17} s \theta_2, \quad &p_{26} &= c_{26} s \theta_1 \theta_2, \\
p_9 &= c_9 s \theta_1^{-1} \theta_2 \theta_3, \quad &p_{18} &= c_{18} s \theta_2 \theta_3, \quad &p_{27} &= c_{27} s \theta_1 \theta_2 \theta_3.
\end{align*}
According to Theorem \ref{theorem18} we choose $c_{1,-1} = c_{2,-1} = c_{3,-1} = 1$, $c_{1,0} = c_{2,0} = c_{3,0} = 2$ and $c_{1,1} = c_{2,1} = c_{3,1} = 1$. Using (\ref{gl6}), we get the ML degree one scaling
\begin{alignat*}{7}
c_1 &= c_{1,-1} c_{2,-1} c_{3,-1} &&= 1, \qquad c_{10}  &&= c_{1,0} c_{2,-1} c_{3,-1} &&= 2, \qquad c_{19} &&= c_{1,1} c_{2,-1} c_{3,-1} &&= 1, \\
c_2 &= c_{1,-1} c_{2,-1} c_{3,0} &&= 2, \qquad c_{11} &&= c_{1,0} c_{2,-1} c_{3,0} &&= 4, \qquad c_{20} &&= c_{1,1} c_{2,-1} c_{3,0} &&= 2, \\
c_3 &= c_{1,-1} c_{2,-1} c_{3,1} &&= 1, \qquad c_{12} &&= c_{1,0} c_{2,-1} c_{3,1} &&= 2, \qquad c_{21} &&= c_{1,1} c_{2,-1} c_{3,1} &&= 1, \\
c_4 &= c_{1,-1} c_{2,0} c_{3,-1} &&= 2, \qquad c_{13} &&= c_{1,0} c_{2,0} c_{3,-1} &&= 4, \qquad c_{22} &&= c_{1,1} c_{2,0} c_{3,-1} &&= 2, \\
c_5 &= c_{1,-1} c_{2,0} c_{3,0} &&= 4, \qquad c_{14} &&= c_{1,0} c_{2,0} c_{3,0} &&= 8, \qquad c_{23} &&= c_{1,1} c_{2,0} c_{3,0} &&= 4, \\
c_6 &= c_{1,-1} c_{2,0} c_{3,1} &&= 2, \qquad c_{15} &&= c_{1,0} c_{2,0} c_{3,1} &&= 4, \qquad c_{24} &&= c_{1,1} c_{2,0} c_{3,1} &&= 2, \\
c_7 &= c_{1,-1} c_{2,1} c_{3,-1} &&= 1, \qquad c_{16} &&= c_{1,0} c_{2,1} c_{3,-1} &&= 2, \qquad c_{25} &&= c_{1,1} c_{2,1} c_{3,-1} &&= 1, \\
c_8 &= c_{1,-1} c_{2,1} c_{3,0} &&= 2, \qquad c_{17} &&= c_{1,0} c_{2,1} c_{3,0} &&= 4, \qquad c_{26} &&= c_{1,1} c_{2,1} c_{3,0} &&= 2, \\
c_9 &= c_{1,-1} c_{2,1} c_{3,1} &&= 1, \qquad c_{18} &&= c_{1,0} c_{2,1} c_{3,1} &&= 2, \qquad c_{27} &&= c_{1,1} c_{2,1} c_{3,1} &&= 1.
\end{alignat*}
\end{example}

\begin{remark}
    The statistical interpretation of the model given by $C_d$ with the chosen scaling $c_{k,-1}=1, c_{k,0}=2, c_{k,1}=1$ for $k=1,\dots,d$ is as follows. It consists of $X_1,\dots,X_d$ independent discrete random variables on the state space $\{-1,0,1\}$ with parameter $\theta_k \in (0,1)$ and probability mass function
$$(\mathbb{P}(X_k=-1),\mathbb{P}(X_k=0),\mathbb{P}(X_k=1))=\frac{1}{(1+\theta_k)^2}(1,2\theta_k,\theta_k^2).$$ Then, given data counts $u \in \NN^{3^d}$ and sufficient statistics $b=\frac{1}{u_+} A_d u \in \RR^d$, this model has a rational MLE for each $\theta_k$ given by \eqref{eq:rational}.

\end{remark}
Now we focus on the dual of $C_d$, which is the $d$-dimensional cross-polytope
\begin{equation*}
\cross_d \coloneqq \textup{conv} (\pm e_1, \pm e_2, \ldots, \pm e_d).
\end{equation*}
For $d=3$ it is known as the octahedron shown in Figure \ref{figure4}. In two dimensions it is the polygon of type 4a shown in Figure \ref{figure1} which exhibits no ML degree drop \citep{MaximumLikelihoodEstimationOfToricFanoVarieties}. We generalize this observation to arbitrary dimension. We note that unfortunately Corollary \ref{cor:product} no longer applies. First, a closed formula for the degree of the toric variety is given by the following proposition.

\begin{proposition}
The degree of $\cross_d$ is $2^d$.
\end{proposition}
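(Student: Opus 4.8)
The plan is to reduce everything to a single volume computation via Theorem \ref{theorem6}, exactly as in the proof of Proposition \ref{proposition12} for the cube. Theorem \ref{theorem6} identifies $\textup{deg}(\cross_d)$ with the normalized volume $d! \cdot \textup{vol}(\cross_d)$, so the whole task becomes showing that the Euclidean volume of $\cross_d = \textup{conv}(\pm e_1, \ldots, \pm e_d) = \{x \in \RR^d : \sum_i |x_i| \le 1\}$ equals $2^d/d!$, whence the normalized volume is exactly $2^d$.

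First I would decompose $\cross_d$ by intersecting it with the $2^d$ closed coordinate orthants. For a fixed sign vector $\varepsilon \in \{\pm 1\}^d$, the intersection of $\cross_d$ with $\{x : \varepsilon_i x_i \ge 0\}$ is the simplex $\textup{conv}(0, \varepsilon_1 e_1, \ldots, \varepsilon_d e_d)$. Each such simplex is the image of the standard simplex $\textup{conv}(0, e_1, \ldots, e_d)$ under reflections in the coordinate hyperplanes, hence is unimodular and has Euclidean volume $1/d!$. The $2^d$ orthant pieces cover $\cross_d$ and overlap only along coordinate hyperplanes, a set of measure zero, so summing gives $\textup{vol}(\cross_d) = 2^d \cdot \tfrac{1}{d!} = \tfrac{2^d}{d!}$ and therefore $\textup{deg}(\cross_d) = d! \cdot \tfrac{2^d}{d!} = 2^d$.

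An alternative that more closely mirrors a recursive computation is to use that $\cross_d$ is the bipyramid over $\cross_{d-1}$ with apexes $\pm e_d$: the slice at height $x_d = t$ is the scaled copy $(1-|t|)\,\cross_{d-1}$, so $\textup{vol}(\cross_d) = \textup{vol}(\cross_{d-1}) \int_{-1}^{1} (1-|t|)^{d-1}\, dt = \tfrac{2}{d}\,\textup{vol}(\cross_{d-1})$; combined with the base case $\textup{vol}(\cross_1) = 2$ this again yields $2^d/d!$. Either computation is elementary, so I would present whichever integrates most smoothly with the surrounding exposition (the orthant decomposition being the shortest).

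There is no genuine obstacle here; the only points that deserve a line of care are bookkeeping ones needed to apply Theorem \ref{theorem6} verbatim. I would note that the lattice points of $\cross_d$ are exactly $0$ and the $\pm e_i$ (an integer point with $\sum_i |x_i| \le 1$ is either the origin or a single $\pm e_i$), so the convex hull of the columns of the design matrix is genuinely $\cross_d$; and that $e_1, \ldots, e_d$ generate $\ZZ^d$, so no lattice-index correction enters the normalized volume. Both observations are immediate, and the degree formula follows.
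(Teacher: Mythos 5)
Your proposal is correct and follows essentially the same route as the paper: the paper's proof also reduces $\textup{deg}(\cross_d)$ to the normalized volume via Theorem \ref{theorem6} and simply cites the known formula $\textup{vol}(\cross_d) = 2^d/d!$ from the literature, whereas you supply a self-contained derivation of that volume (either the orthant decomposition into $2^d$ unimodular simplices or the bipyramid recursion) together with the sensible bookkeeping check that the lattice points of $\cross_d$ are exactly $0$ and $\pm e_i$. No gaps; your version is just a fleshed-out form of the paper's one-line proof.
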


\begin{proof}
The volume of $\cross_d$ is $2^d/d!$, see e.g. \citep{OnTheVolumeOfProjectionsOfTheCrossPolytope}.
\end{proof}

\begin{figure}
\centering
\begin{subfigure}[b]{0.475\textwidth}
(a)
\end{subfigure}
\hfill
\begin{subfigure}[b]{0.475\textwidth}
(b)
\end{subfigure}
\centering
\begin{subfigure}[b]{0.4\textwidth}
\centering
\hspace{1cm} \includegraphics[scale=0.5]{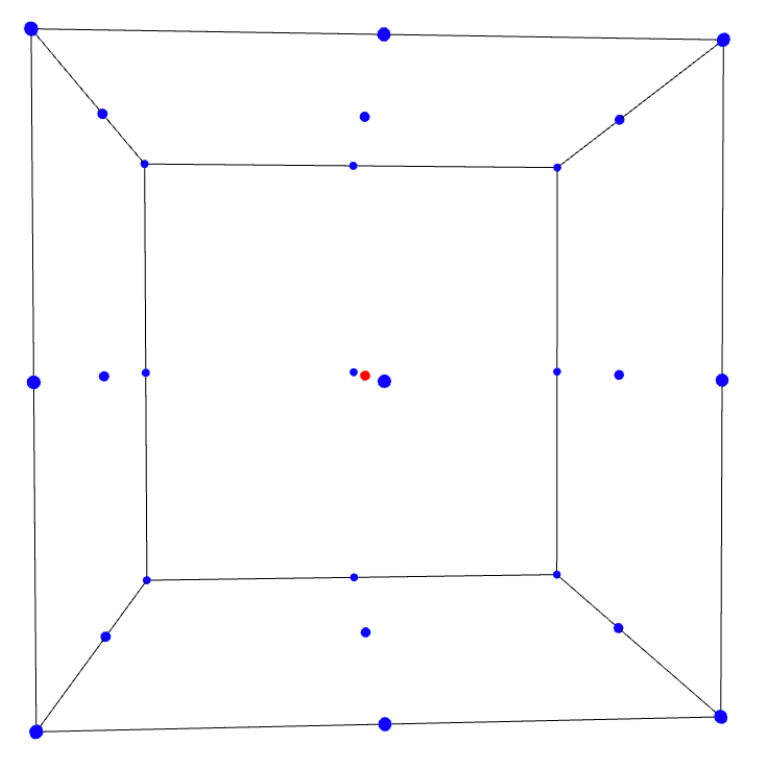}
\end{subfigure}
\hspace{1cm}
\begin{subfigure}[b]{0.4\textwidth}
\centering
\hspace*{0.5cm}\includegraphics[width=\textwidth]{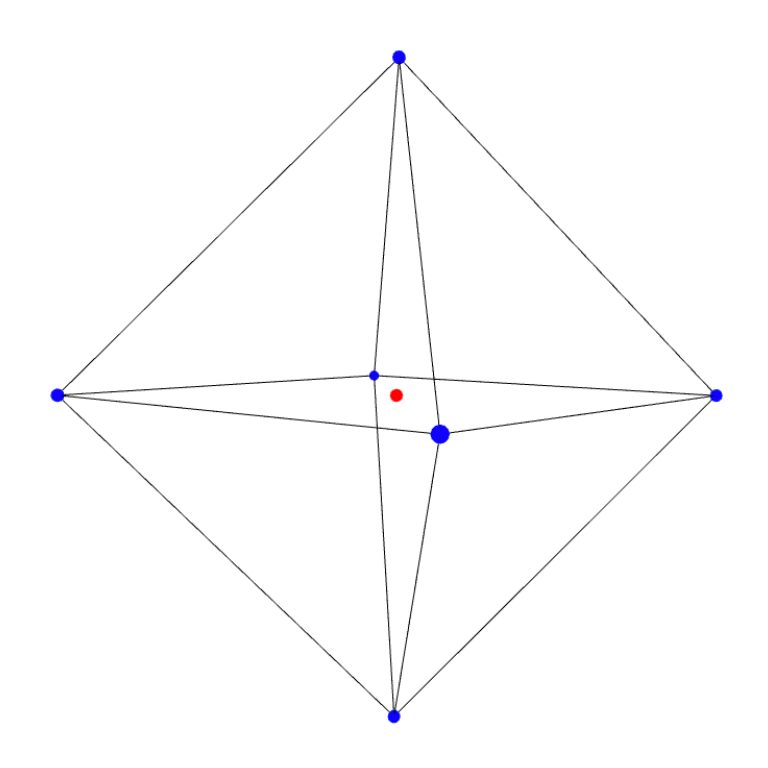}
\end{subfigure}
\caption[]{\small The 3-cube $C_3$ shown in (a) and its dual octahedron $\cross_3$ shown in (b). The red point corresponds to the interior lattice point. The visualizations were created using Polymake \citep{ComputingConvexHullsAndCountingIntegerPointsWithPolymake, polymakeAFrameworkForAnalyzingConvexPolytopes}.} \label{figure4}
\end{figure}

Since $\cross_d$ is reflexive, the design matrix defined by $\cross_d$ is 
\begin{equation*}
A_d = \begin{bmatrix}
0 & e_1 & \ldots & e_d & -e_1 & \ldots & -e_d \\
\end{bmatrix}.
\end{equation*}
With the following theorem we get a generalization of the 2-dimensional result.

\begin{theorem} \label{theorem24}
The ML degree of the cross polytope $\cross_d$ is $2^d$.
\end{theorem}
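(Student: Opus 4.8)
The plan is to invoke Theorem \ref{thm:Adet}: since the preceding proposition gives $\textup{deg}(\cross_d)=2^d$, it suffices to show that the principal $A$-determinant does not vanish at the standard scaling, i.e. $E_{A_d}(1,\ldots,1)\neq 0$, for then $\textup{mldeg}(\cross_d)=\textup{deg}(\cross_d)=2^d$. Concretely, I would work through the factorization $E_{A_d}(c)=\prod_{\Gamma}\Delta_{\Gamma\cap A_d}(c)$ over the nonempty faces $\Gamma$ of $\cross_d$ and show that every factor is nonzero at $c=(1,\ldots,1)$. Equivalently, this amounts to checking that the Laurent polynomial $f=1+\sum_{i=1}^d(\theta_i+\theta_i^{-1})$ arising from the standard scaling is nondegenerate with respect to its Newton polytope $\cross_d$, meaning that no facial truncation $f_\Gamma$ has a common zero with all its partial derivatives in $(\CC^*)^d$.

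First I would dispose of the proper faces. Every proper face of $\cross_d$ has the form $\Gamma=\textup{conv}(\epsilon_i e_i : i\in I)$ for some nonempty $I\subseteq[d]$ and signs $\epsilon_i\in\{\pm 1\}$, and since the origin is the unique interior lattice point, no proper face contains it. A short argument shows such a simplex has no lattice points besides its vertices: writing a point as $\sum_{i\in I}\lambda_i\epsilon_i e_i$ with $\lambda_i\ge 0$ and $\sum_i\lambda_i=1$, integrality of the coordinates $\lambda_i\epsilon_i$ forces each $\lambda_i\in\{0,1\}$. Hence the truncation is $f_\Gamma=\sum_{i\in I}\theta_i^{\epsilon_i}$, a sum of monomials in distinct variables with no constant term, so $\partial f_\Gamma/\partial\theta_i=\epsilon_i\theta_i^{\epsilon_i-1}$ is a nonvanishing monomial on the torus. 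Thus $f_\Gamma$ has no torus singularity, the corresponding discriminant is constant and in particular nonzero at $c=(1,\ldots,1)$, and the product collapses to $E_{A_d}(c)=\Delta_{A_d}(c)$.

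It then remains to treat $\Gamma=\cross_d$ itself, where the origin contributes the constant term. Here I would use a parity computation: at any torus critical point of $f$ the equations $\partial f/\partial\theta_i=1-\theta_i^{-2}=0$ force $\theta_i=\pm 1$ for all $i$, whence $\theta_i+\theta_i^{-1}=\pm 2$ and $f=1+2m$ with $m\in\ZZ$ lying in $\{-d,\ldots,d\}$. Since $1+2m$ is odd it never vanishes, so $f$ has no singular point in $(\CC^*)^d$ and $\Delta_{A_d}(1,\ldots,1)\neq 0$. Combining the two cases gives $E_{A_d}(1,\ldots,1)\neq 0$, and the theorem follows. It is worth noting that it is precisely the interior lattice point at the origin that distinguishes $\cross_d$ from the cube: its constant term lets the partials vanish simultaneously, yet the odd value of $f$ at those points rules out an actual singularity.

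The step I expect to require the most care is the bookkeeping that links $E_{A_d}(1,\ldots,1)\neq 0$ to the absence of torus critical points of the truncations. Because $\nabla_{A_d}$ is defined as a Zariski closure, verifying that $(1,\ldots,1)$ admits no singular $\theta$ is a priori weaker than verifying $(1,\ldots,1)\notin\nabla_{A_d}$; the clean resolution is the characterization of the principal $A$-determinant in \citep{DiscriminantsResultantsAndMultidimensionalDeterminants}, namely that $E_A(c)\neq 0$ is equivalent to nondegeneracy of $f_c$ across all faces, which is exactly what the face-by-face analysis establishes and which already absorbs the boundary degenerations into the proper-face factors. I would make this criterion explicit, or reduce to it, so that the two elementary computations above suffice to conclude.
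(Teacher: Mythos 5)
Your proposal is correct and follows essentially the same route as the paper: a face-by-face analysis of the principal $A$-determinant, observing that the truncations $f_\Gamma$ on proper faces have nonvanishing monomial partial derivatives on the torus, and then ruling out singularities of the full polynomial $f = 1 + \sum_{i}(\theta_i + \theta_i^{-1})$ by noting that critical points force $\theta_i = \pm 1$, where $f$ takes an odd integer value and hence cannot vanish. Your added care about lattice points on proper faces and about the Zariski-closure subtlety via the GKZ nondegeneracy criterion is a welcome refinement, but not a different argument.
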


\begin{proof}
We show that no face of $\cross_d$ contributes a singularity. For $1 \le j \le d$ the $(j-1)$-faces of $\cross_d$ are given by the $j$-subsets of $\{ \pm e_1, \pm e_2, \ldots, \pm e_d \}$ that do not simultaneously contain $e_i$ and $-e_i$. Let $\Gamma = \Gamma^+ \cup \Gamma^-$ be such a subset, where $\Gamma^+$ is the set of non-negative vectors and $\Gamma^-$ is the set of non-positive vectors. Let $I(\Gamma^+)$ and $I(\Gamma^-)$ be the corresponding index sets. The face $\Gamma$ defines the polynomial 
\begin{equation*}
f_\Gamma = \sum_{i \in I(\Gamma^+)} \theta_i + \sum_{i \in I(\Gamma^-)} \frac{1}{\theta_{i}}.
\end{equation*}
To determine the singularities of $f_\Gamma$, we consider the partial derivatives
$$
\frac{\partial f_\Gamma}{\partial \theta_i} (\theta) = 1 \quad \textup{for } i \in I(\Gamma^+), \qquad
\frac{\partial f_\Gamma}{\partial \theta_i} (\theta) = - \frac{1}{\theta_i^2} \quad \textup{for } i \in I(\Gamma^-).
$$
The system 
\begin{equation*}
f_\Gamma = \frac{\partial f_\Gamma}{\partial \theta_i} (\theta) = 0, \quad i \in [d],
\end{equation*}
has no solution $\theta \in (\mathbb{C}^*)^d$. Hence, no $(j-1)$-dimensional face for $1 \le j \le d$ contributes a singularity.

It remains to consider the whole polytope. The corresponding polynomial is given by
\begin{equation*}
f = 1 + \sum_{i \in [d]} \left ( \theta_i + \frac{1}{\theta_i} \right ).
\end{equation*}
Thus,
\begin{equation*}
\frac{\partial f}{\partial \theta_i} (\theta) = 1 - \frac{1}{\theta_i^2} = 0 \quad \textup{if and only if} \quad \theta_i = \pm 1 \quad \textup{for all } i \in [d].
\end{equation*}

We argue that $f \neq 0$ for each $\pm 1$-combination of $\theta_1, \ldots, \theta_d$. For $\theta_i = \pm 1$, $f$ is equivalent to $1 + 2 \sum_{i \in [d]} \theta_i$. Therefore, $f = 0$ if and only if
\begin{equation*}
\sum_{i \in [d]} \theta_i = - \frac{1}{2}.
\end{equation*}
This is a contradiction to $\theta_i = \pm 1$. Thus $\cross_d$ does not imply a singularity. Altogether, $\textup{mldeg} (\cross_d) = \textup{deg} (\cross_d) = 2^d$.
\end{proof}

While the toric variety of $\cross_d$ exhibits no ML degree drop, this is not true for all its scaled toric varieties, as the following example shows.

\begin{example}[ML degree drop for scaled $\cross_2$] \upshape
After an affine transformation, the design matrix of $\cross_2$ is given by
\begin{equation*}
A = \begin{bmatrix}
1 & 2 & 1 & 0 & 1 \\
1 & 1 & 2 & 1 & 0 \\
\end{bmatrix}.
\end{equation*}
Vertices $a_i$ have $\Delta_{a_i} \neq 0$. Each edge $e_i$ has lattice length one and therefore $\Delta_{e_i} \neq 0$. Considering the whole polytope yields
\begin{equation*}
f_c = c_{11} \theta_1 \theta_2 + c_{21} \theta_1^2 \theta_2 + c_{12} \theta_1 \theta_2^2 + c_{01} \theta_2 + c_{10} \theta_1.
\end{equation*}
The $A$-discriminant $\Delta_A$ can be computed using the following \texttt{Macaulay2} code.

\lstset{language=C++, basicstyle=\ttfamily, escapeinside={(*@}{@*)}, mathescape=true}
\begin{lstlisting}
R = QQ[(*@c\_11@*),(*@c\_21@*),(*@c\_12@*),(*@c\_01@*),(*@c\_10@*),(*@t\_1@*),(*@t\_2@*)]
f = (*@c\_11@*)*(*@t\_1@*)*(*@t\_2@*)+(*@c\_21@*)*(*@t\_1@*)^2*(*@t\_2@*)+(*@c\_12@*)*(*@t\_1@*)*(*@t\_2@*)^2+(*@c\_01@*)*(*@t\_2@*)+(*@c\_10@*)*(*@t\_1@*)
I = ideal(f,diff((*@t\_1@*),f),diff((*@t\_2@*),f))
eliminate({(*@t\_1@*),(*@t\_2@*)},I)
\end{lstlisting}
The result is the ideal generated by
\begin{equation*}
\Delta_A = c_{11}^4 c_{01} c_{10} - 8 c_{11}^2 c_{21} c_{01}^2 c_{10} + 16 c_{21}^2 c_{01}^3 c_{10} - 8 c_{11}^2 c_{12} c_{01} c_{10}^2 - 32 c_{21} c_{12} c_{01}^2 c_{10}^2 +16 c_{12}^2 c_{01} c_{10}^3.
\end{equation*}
We have $E_A(c) = 0$ if and only if $\Delta_A = 0$. For $c \in (\mathbb{C}^*)^n$, $\Delta_A = 0$ if and only if
\begin{equation*}
c_{10} = \frac{c_{11}^2 \pm 4 c_{11} \sqrt{c_{21}} \sqrt{c_{01}} + 4 c_{21} c_{01}}{4 c_{12}}.
\end{equation*}
If we choose $c_{11} = 2$, $c_{21} = 4$, $c_{12} = 25$ and $c_{01} = 4$, we get $c_{10} = 1$. Using this scaling, $A' \psi^c (s,\theta) = \frac{1}{u_+} A' u$ has three solutions for generic $u$ according to our computations with \texttt{Julia}. That is, the scaled model has ML degree three.
\end{example}

\section{Reflexive Simplices} \label{section6}

In this section we discuss four classes of reflexive simplices in arbitrary dimension. We give upper bounds on the ML degree by computing degrees of the associated toric varieties, and we conjecture that they are equalities. 

The first family of self-dual reflexive simplices that we consider was introduced by \citet{SelfDualReflexiveSimplicesWithEulerianPolynomials}. For $d \ge 1$, let $\mathcal{Q}_d$ denote the $d$-dimensional simplex
\begin{equation*}
\mathcal{Q}_d \coloneqq \textup{conv} \begin{blockarray}{cccccc}
q_0 & q_1 & q_2 & q_3 & \ldots & q_d\\
\begin{block}{[cccccc]}
1 & -d & 0 & 0 & \ldots & 0 \\
1 & 1 & 1-d & 0 & \ldots & 0 \\
1 & 1 & 1 & 2-d & \ldots & 0 \\
\vdots & \vdots & \vdots & & \ddots & \vdots \\
1 & 1 & 1 & 1 & \ldots & -1 \\
\end{block}
\end{blockarray}.
\end{equation*}

\begin{proposition}\label{prop:refl1}
The degree of $\mathcal{Q}_d$ is $(d+1)!$. In particular, $\mathrm{mldeg}(\mathcal{Q}_d) \leq (d+1)!$
\end{proposition}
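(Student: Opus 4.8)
The plan is to reduce the degree computation to the evaluation of a single determinant by invoking Theorem \ref{theorem6}, which identifies $\textup{deg}(V)$ with the normalized volume $\textup{nvol}(\mathcal{Q}_d) = d!\cdot\textup{vol}(\mathcal{Q}_d)$. Since $\mathcal{Q}_d$ is a simplex with vertices $q_0, q_1, \ldots, q_d$, its normalized volume is exactly the absolute value of the determinant of the $d\times d$ matrix $M$ whose columns are the edge vectors $q_1 - q_0, \ldots, q_d - q_0$ based at $q_0$. Thus the entire proposition comes down to computing $|\det M|$, after which the bound $\textup{mldeg}(\mathcal{Q}_d) \le (d+1)!$ is immediate from \eqref{gl2}.

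First I would translate the simplex by $-q_0$ so that the vertex $q_0 = (1,1,\ldots,1)^\top$ sits at the origin, and read off the edge vectors from the defining matrix. The vertex $q_j$ (for $1 \le j \le d$) has its first $j-1$ coordinates equal to $0$, its $j$-th coordinate equal to $(j-1)-d$, and its remaining coordinates equal to $1$. Subtracting $q_0$ yields an edge vector whose first $j-1$ coordinates are $-1$, whose $j$-th coordinate is $j-2-d$, and whose remaining coordinates vanish. Placing these as the columns of $M$, the key observation is that every entry strictly below the diagonal is $0$, so $M$ is upper triangular and $\det M$ equals the product of its diagonal entries $M_{jj} = j-2-d$.

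It then remains to evaluate $\det M = \prod_{j=1}^d (j-2-d)$. Each factor is negative, contributing an overall sign $(-1)^d$, while the absolute values run through $d+1, d, d-1, \ldots, 2$ as $j$ ranges from $1$ to $d$. Their product is $(d+1)!$, so $|\det M| = (d+1)!$ and hence $\textup{deg}(V) = \textup{nvol}(\mathcal{Q}_d) = (d+1)!$. The inequality $\textup{mldeg}(\mathcal{Q}_d) \le (d+1)!$ follows at once from \eqref{gl2}.

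This argument is essentially a bookkeeping exercise, so there is no genuine obstacle; the only place demanding care is correctly extracting the coordinates of each $q_j$ from the staircase pattern of the defining matrix and verifying that the translated edge matrix is genuinely triangular rather than merely sparse. Once triangularity is confirmed, the determinant collapses to the product of the diagonal entries and the factorial emerges immediately.
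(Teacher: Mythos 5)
Your proof is correct and takes essentially the same route as the paper: the paper's proof consists solely of the statement that the normalized volume of $\mathcal{Q}_d$ is $\lvert \det [\, q_1-q_0 \;\; q_2-q_0 \;\; \ldots \;\; q_d-q_0 \,] \rvert$, leaving the evaluation to the reader, while you carry that evaluation out explicitly by checking that the edge matrix is upper triangular with diagonal entries $M_{jj}=j-2-d$, whose absolute values $d+1, d, \ldots, 2$ multiply to $(d+1)!$. Your coordinate bookkeeping for the $q_j$ and the triangularity verification are both accurate, and the final appeal to \eqref{gl2} for $\mathrm{mldeg}(\mathcal{Q}_d) \leq (d+1)!$ matches the paper's implicit use of the same bound.
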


\begin{proof}
The normalized volume of $\mathcal{Q}_d$ is the absolute value of 
\begin{equation*}
\textup{det} \begin{bmatrix}
q_1-q_0 & q_2-q_0 & q_3-q_0 & \ldots & q_d-q_0
\end{bmatrix}. \qedhere
\end{equation*}
\end{proof}

\noindent According to our computations using \texttt{HomotopyContinuation.jl}, the first instances are
$$
\textup{mldeg} (\mathcal{Q}_2) = \textup{deg} (\mathcal{Q}_2) = 6, \qquad
\textup{mldeg} (\mathcal{Q}_3) = \textup{deg} (\mathcal{Q}_3) = 24, \qquad
\textup{mldeg} (\mathcal{Q}_4) = \textup{deg} (\mathcal{Q}_4) = 120.
$$

As can be seen in Figure \ref{figure1},  there are five isomorphism classes of reflexive simplices in two dimensions. A classification of the five isomorphism classes can be studied in \citep[Example 4.7]{VolumeAndLatticePointsOfReflexiveSimplices}. Since $\mathcal{Q}_2$ shown in Figure \ref{figure6} contains seven lattice points, it is isomorphic to $P_{6 \textup{d}}$.

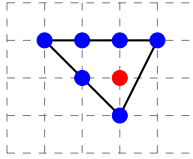
\begin{figure}[h]
\centering
\begin{tikzpicture} 
\draw [step=0.5,gray, dashed] (0,-0.5) grid (2.5,1.5);

\draw[thick] (0.5, 1.0) -- (2.0, 1.0) {};
\draw[thick] (2.0, 1.0) -- (1.5, 0.0) {};
\draw[thick] (1.5, 0.0) -- (0.5, 1.0) {};

\node[draw, circle, inner sep=2pt, fill, blue] at (1.5, 0.0) {};
\node[draw, circle, inner sep=2pt, fill, blue] at (2.0, 1.0) {};
\node[draw, circle, inner sep=2pt, fill, blue] at (1.0, 0.5) {};
\node[draw, circle, inner sep=2pt, fill, blue] at (1.0, 1.0) {};
\node[draw, circle, inner sep=2pt, fill, blue] at (0.5, 1.0) {};
\node[draw, circle, inner sep=2pt, fill, blue] at (1.5, 1.0) {};
\node[draw, circle, inner sep=2pt, fill, red] at (1.5, 0.5) {};
\end{tikzpicture}
\caption{Reflexive simplex $\mathcal{Q}_2$.} \label{figure6}
\end{figure}

The remaining three constructions of reflexive simplices are based on Sylvester's sequence \citep{OnAPointInTheTheoryOfVulgarFractions}. Let $t_1 \coloneqq 2$ and $t_{i+1} \coloneqq t_i^2-t_i+1$. The first terms of the sequence are 
\begin{equation*}
t_1 = 2, \quad t_2 = 3, \quad t_3 = 7, \quad t_4 = 43, \quad t_5 = 1807.
\end{equation*}
The following two simplex constructions were first described by \citet*{OnLatticePolytopesHavingInteriorLatticePoints}. We define
\begin{equation*}
\mathcal{R}_d \coloneqq \textup{conv}(0, t_1 e_1, \ldots, t_d e_d).
\end{equation*}
Then $\mathcal{R}_d$ is reflexive with interior lattice point $\textbf{1}$ \citep{TheReflexiveDimensionOfALatticePolytope}. In two dimensions this simplex is also isomorphic to $P_{6 \textup{d}}$.

\begin{proposition}\label{prop:refl2}
The degree of $\mathcal{R}_d$ is $t_d^2-t_d$.  In particular, $\mathrm{mldeg}(\mathcal{R}_d) \leq t_d^2-t_d$.
\end{proposition}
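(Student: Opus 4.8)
The plan is to reduce the degree computation to a volume computation via Theorem \ref{theorem6}, and then exploit the defining recurrence of Sylvester's sequence to recognize that the resulting product telescopes into the stated closed form. By Theorem \ref{theorem6}, the degree of the toric variety equals the normalized volume $d! \cdot \textup{vol}(\mathcal{R}_d)$, so the whole statement comes down to showing $d! \cdot \textup{vol}(\mathcal{R}_d) = t_d^2 - t_d$. Since $\mathrm{mldeg} \leq \mathrm{deg}$ always holds by \eqref{gl2}, the inequality $\mathrm{mldeg}(\mathcal{R}_d) \leq t_d^2 - t_d$ is then immediate.

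First I would compute the volume directly. The simplex $\mathcal{R}_d = \textup{conv}(0, t_1 e_1, \ldots, t_d e_d)$ has one vertex at the origin, and the $d$ edge vectors emanating from it are precisely $t_1 e_1, \ldots, t_d e_d$. Hence
\begin{equation*}
\textup{vol}(\mathcal{R}_d) = \frac{1}{d!} \left| \det \begin{bmatrix} t_1 e_1 & \cdots & t_d e_d \end{bmatrix} \right| = \frac{1}{d!} \prod_{i=1}^d t_i,
\end{equation*}
because the matrix is diagonal with entries $t_1, \ldots, t_d$. Multiplying by $d!$ gives $\textup{deg}(\mathcal{R}_d) = \prod_{i=1}^d t_i$, so it remains to identify this product with $t_d^2 - t_d$.

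The key algebraic step is to rewrite the Sylvester recurrence $t_{i+1} = t_i^2 - t_i + 1$ as
\begin{equation*}
t_{i+1} - 1 = t_i^2 - t_i = t_i(t_i - 1),
\end{equation*}
which lets the product collapse. I would prove $\prod_{i=1}^d t_i = t_d(t_d - 1) = t_d^2 - t_d$ by induction on $d$. The base case $d = 1$ reads $t_1 = 2 = 2 \cdot 1 = t_1(t_1 - 1)$. For the inductive step, assuming $\prod_{i=1}^d t_i = t_d(t_d - 1)$ and using the rewritten recurrence in the form $t_d(t_d - 1) = t_{d+1} - 1$, one computes
\begin{equation*}
\prod_{i=1}^{d+1} t_i = t_{d+1} \prod_{i=1}^d t_i = t_{d+1}\, t_d(t_d - 1) = t_{d+1}(t_{d+1} - 1) = t_{d+1}^2 - t_{d+1},
\end{equation*}
completing the induction.

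There is no serious obstacle here: the argument is essentially a volume computation followed by a telescoping identity. The only point requiring genuine insight is spotting that the Sylvester recurrence is exactly what is needed to fold the product $\prod_{i=1}^d t_i$ into the quadratic expression $t_d^2 - t_d$; once the recurrence is written as $t_{i+1} - 1 = t_i(t_i - 1)$, the induction is immediate. As a sanity check, the first few values $\prod_{i=1}^d t_i = 2, 6, 42, 1806$ for $d = 1, 2, 3, 4$ match $t_d^2 - t_d = 2, 6, 42, 1806$, which also agrees with the degree of $\mathcal{Q}_2$ being $6$ since both $\mathcal{R}_2$ and $\mathcal{Q}_2$ are isomorphic to $P_{6\textup{d}}$.
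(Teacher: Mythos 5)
Your proof is correct and follows essentially the same route as the paper: both reduce the degree to the determinant $\prod_{i=1}^d t_i$ of the diagonal matrix $[t_1 e_1, \ldots, t_d e_d]$ and then run the same induction using the Sylvester recurrence in the form $t_{d+1}-1 = t_d(t_d-1)$. The only cosmetic difference is that you start the induction at $d=1$ while the paper starts at $d=2$, and you make the telescoping identity slightly more explicit.
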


\begin{proof}
Consider
\begin{equation*}
V_d = [t_1 e_1, \ldots, t_d e_d] \in \mathbb{R}^{d \times d}.
\end{equation*}
Since $\textup{nvol}(\mathcal{R}_d) = \vert \textup{det} V_d \vert$ we show $\vert \textup{det} V_d \vert = t_d^2-t_d$ by induction. For $d=2$,
\begin{equation*}
\textup{det} V_2 = t_1 t_2 = 6 = t_2^2-t_2.
\end{equation*}
We assume that the formula holds in dimension $d$. Using the definition of Sylvester's sequence,
\begin{equation*}
\textup{det} V_{d+1} = \prod_{i=1}^{d+1} t_i = \textup{det} (V_d) t_{d+1} = (t_d^2-t_d)t_{d+1} = (t_{d+1}-1)t_{d+1} = t_{d+1}^2-t_{d+1}.\qedhere
\end{equation*} 
\end{proof}

\noindent We are able to verify that
$$
\textup{mldeg} (\mathcal{R}_2) = \textup{deg} (\mathcal{R}_2) = 6, \qquad
\textup{mldeg} (\mathcal{R}_3) = \textup{deg} (\mathcal{R}_3) = 42,
$$
but since the number of lattice points grows very fast, it was not possible to compute further ML degrees of $\mathcal{R}_d$ in higher dimensions.

A small modification of $\mathcal{R}_d$ defines the reflexive simplex 
\begin{equation*}
\mathcal{S}_d \coloneqq \textup{conv}(0, t_1 e_1, \ldots, t_{d-1} e_{d-1}, 2(t_d-1)e_d).
\end{equation*}
In two dimensions it is isomorphic to $P_{8\textup{c}}$.

\begin{proposition}\label{prop:refl3}
For any $d>1$, $\mathrm{mldeg}(\mathcal{S}_d) \leq (t_{d-1}^2-t_{d-1})2(t_d-1) = \mathrm{deg}(\mathcal{S}_d)$.
\end{proposition}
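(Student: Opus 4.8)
The inequality $\mathrm{mldeg}(\mathcal{S}_d) \leq \mathrm{deg}(\mathcal{S}_d)$ is nothing but the general bound (\ref{gl2}), so the entire content of the statement lies in the equality $\mathrm{deg}(\mathcal{S}_d) = (t_{d-1}^2-t_{d-1}) \cdot 2(t_d-1)$. The plan is to compute this degree as a normalized volume via Theorem \ref{theorem6}, in direct analogy with the proof of Proposition \ref{prop:refl2}.

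First I would observe that $\mathcal{S}_d$ is a simplex one of whose vertices is the origin, so its normalized volume equals the absolute value of the determinant of the matrix
\begin{equation*}
W_d = [t_1 e_1, \ldots, t_{d-1} e_{d-1}, 2(t_d-1) e_d] \in \mathbb{R}^{d \times d}
\end{equation*}
formed by the other $d$ vertices. Since each column is a scalar multiple of a distinct standard basis vector, $W_d$ is diagonal, and therefore
\begin{equation*}
\det(W_d) = \Big(\prod_{i=1}^{d-1} t_i\Big) \cdot 2(t_d-1).
\end{equation*}

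It then remains to evaluate $\prod_{i=1}^{d-1} t_i$, and for this I would invoke exactly the induction carried out in the proof of Proposition \ref{prop:refl2}, applied in dimension $d-1$, which yields $\prod_{i=1}^{d-1} t_i = t_{d-1}^2 - t_{d-1}$; equivalently this equals $t_d - 1$ via the recurrence $t_d = t_{d-1}^2 - t_{d-1} + 1$. Substituting gives $\mathrm{deg}(\mathcal{S}_d) = (t_{d-1}^2-t_{d-1}) \cdot 2(t_d-1)$, as claimed. There is no genuine obstacle in this argument, since the determinant is that of a diagonal matrix; the whole content is the reuse of the Sylvester product identity, and the only point worth checking is that replacing the scaling $t_d$ in $\mathcal{R}_{d-1}$ by $2(t_d-1)$ in the final coordinate keeps the vertex matrix diagonal, so that the degree factors cleanly as $\mathrm{deg}(\mathcal{R}_{d-1}) \cdot 2(t_d-1)$.
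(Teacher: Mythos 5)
Your proof is correct and follows the same route as the paper, which simply states that the normalized volume of $\mathcal{S}_d$ is $\det[t_1 e_1, \ldots, t_{d-1} e_{d-1}, (2t_d-2)e_d]$ and leaves the evaluation implicit. You merely make explicit what the paper compresses: the diagonal determinant, the reuse of the Sylvester product identity $\prod_{i=1}^{d-1} t_i = t_{d-1}^2 - t_{d-1} = t_d - 1$ from Proposition \ref{prop:refl2}, and the general bound \eqref{gl2} for the inequality.
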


\begin{proof}
The normalized volume of $\mathcal{S}_d$ is $\textup{det}[t_1 e_1, \ldots, t_{d-1} e_{d-1}, (2t_d-2)e_d]$.
\end{proof}

\noindent The first two instances are:
$$
\textup{mldeg} (\mathcal{S}_2) = \textup{deg} (\mathcal{S}_2) = 8, \qquad
\textup{mldeg} (\mathcal{S}_3) = \textup{deg} (\mathcal{S}_3) = 72.
$$

The fourth family of reflexive simplices was introduced in \citep[Theorem 3.2]{TheDVectorsOfReflexivePolytopesAndOfTheDualPolytopes}. For $d \ge 3$ we consider the $d$-dimensional simplex $\mathcal{T}_d$ whose vertices $v_i \in \mathbb{R}^d$ are of the form 
\begin{equation*}
v_i = \begin{cases}
-3 e_1 - 2 \sum_{i=2}^d e_i, & i = 0, \\
e_1, & i=1, \\
e_1 + 2 e_i, & i = 2,3, \\
e_1 + 2 t_{i-4} e_i, & i = 4, \ldots, d.
\end{cases}
\end{equation*}

\begin{proposition}\label{prop:refl4}
For $d\geq 3$, 
\begin{equation*}
\mathrm{mldeg}(\mathcal{T}_d) \leq 
2^{d+1} \prod_{i=1}^{d-3} t_i = \textup{deg} (\mathcal{T}_d).
\end{equation*}
\end{proposition}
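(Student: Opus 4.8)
The plan is to follow the same template as Propositions \ref{prop:refl1}--\ref{prop:refl3}: since $\textup{mldeg}(\mathcal{T}_d) \le \textup{deg}(\mathcal{T}_d)$ by (\ref{gl2}), and $\textup{deg}(\mathcal{T}_d) = \textup{nvol}(\mathcal{T}_d)$ by Theorem \ref{theorem6}, the whole proposition reduces to computing the normalized volume of the simplex $\mathcal{T}_d = \textup{conv}(v_0, v_1, \ldots, v_d)$. As a $d$-simplex, its normalized volume equals
\[
\left| \det M \right|, \qquad \text{where } M = \begin{bmatrix} v_1 - v_0 & \cdots & v_d - v_0 \end{bmatrix} \in \mathbb{Z}^{d \times d},
\]
so everything comes down to evaluating this one determinant.

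First I would write out the columns explicitly. Since $-v_0 = 3 e_1 + 2 \sum_{i=2}^d e_i$, the column $v_1 - v_0$ equals $4 e_1 + 2 \sum_{i=2}^d e_i$, and for $j \ge 2$ the column $v_j - v_0$ equals that same vector plus a single ``bump'' in coordinate $j$, where the bump is $2$ for $j \in \{2,3\}$ and $2 t_{j-4}$ for $j \ge 4$. In matrix terms, $M$ has first row $(4, 4, \ldots, 4)$, every remaining off-diagonal entry equal to $2$, and diagonal entries $M_{kk}$ equal to $4$ for $k \in \{2,3\}$ and $2 + 2 t_{k-4}$ for $k \ge 4$; note that the first column $v_1 - v_0$ carries no bump.

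The key step is to triangularize $M$ by subtracting the first column from each of the columns $2, \ldots, d$. This turns the top row into $(4, 0, \ldots, 0)$ and, in every row $k \ge 2$, cancels the constant $2$ everywhere except on the diagonal, where the bump survives. The result is lower triangular, so
\[
\det M = 4 \cdot 2 \cdot 2 \cdot \prod_{k=4}^{d} 2 t_{k-4} = 2^{d+1} \prod_{i=1}^{d-3} t_i ,
\]
after collecting the powers of two (note $16 \cdot 2^{\,d-3} = 2^{d+1}$) and reindexing the Sylvester factors. All entries are positive, so $\textup{nvol}(\mathcal{T}_d) = \left|\det M\right| = 2^{d+1} \prod_{i=1}^{d-3} t_i = \textup{deg}(\mathcal{T}_d)$, and the asserted bound on $\textup{mldeg}$ is immediate. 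The base case $d=3$ is a useful sanity check: there the product is empty and $\det M = 4 \cdot 2 \cdot 2 = 16 = 2^{4}$, matching the formula.

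I do not expect a genuine obstacle, since the determinant collapses cleanly once the columns are reduced. The only point that demands care is bookkeeping: one must match the index shift in the vertex formula $v_i = e_1 + 2 t_{i-4} e_i$ to the product $\prod_{i=1}^{d-3} t_i$ appearing in the claimed degree, and confirm that the two exceptional vertices $v_2, v_3$ contribute the plain factors $2 \cdot 2$ rather than Sylvester terms. In particular I would double-check the convention for the first index of Sylvester's sequence, so that the factor $t_{k-4}$ at $k=4$ is read consistently with the paper's normalization $t_1 = 2$; this is exactly where the reindexing $\prod_{k=4}^{d} t_{k-4} = \prod_{i=1}^{d-3} t_i$ is justified.
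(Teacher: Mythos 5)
Your proof is correct and essentially identical to the paper's: both arguments reduce the proposition to a single triangular determinant of edge vectors of the simplex (the paper takes differences based at $v_1$, so its matrix $[v_0-v_1 \mid v_2-v_1 \mid \cdots \mid v_d-v_1]$ is lower triangular immediately, while you base at $v_0$ and triangularize with one column operation), giving $\lvert\det\rvert = 4\cdot 2\cdot 2\cdot\prod 2t_\bullet = 2^{d+1}\prod_{i=1}^{d-3}t_i$. The indexing subtlety you flag is real but inherited from the paper itself---its displayed matrix carries diagonal entries $2t_0,\dots,2t_{d-4}$ (the cited source's $0$-based convention for Sylvester's sequence, $t_0=2$) while its statement uses the paper's $1$-based convention $t_1=2$---and your resolution via the reindexing $\prod_{k=4}^{d}t_{k-4}=\prod_{i=1}^{d-3}t_i$ is the intended reading, as your checks of the cases $d=3$ and $d=4$ against the computed values $16$ and $64$ confirm.
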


\begin{proof}
The normalized volume of $\mathcal{T}_d$ is the absolute value of 
\begin{align*}
\textup{det} \begin{bmatrix}
-4 & 0 & 0 & 0 & 0 & \ldots & 0 \\
-2 & 2 & 0 & 0 & 0 & \ldots & 0 \\
-2 & 0 & 2 & 0 & 0 & \ldots & 0 \\
-2 & 0 & 0 & 2t_0 & 0 & \ldots & 0 \\
-2 & 0 & 0 & 0 & 2t_1 & \ldots & 0 \\
\vdots & \vdots & \vdots & \vdots & & \ddots & \vdots \\
-2 & 0 & 0 & 0 & 0 & \ldots & 2t_{d-4} \\
\end{bmatrix}. \alignqedhere
\end{align*}
\end{proof}

\noindent In this case, the first two instances are
$$
\textup{mldeg} (\mathcal{T}_3) = \textup{deg} (\mathcal{T}_3) = 16, \qquad
\textup{mldeg} (\mathcal{T}_4) = \textup{deg} (\mathcal{T}_4) = 64.
$$

Based on our computations, we state the following conjecture.

\begin{conjecture}
The reflexive simplices $\mathcal{Q}_d, \mathcal{R}_d, \mathcal{S}_d, \mathcal{T}_d$ do not exhibit an ML degree drop with the standard scaling. In particular, all inequalities in Propositions \ref{prop:refl1}, \ref{prop:refl2}, \ref{prop:refl3} and \ref{prop:refl4} are in fact equalities. 
\end{conjecture}
Notwithstanding the conjecture, we point out that it is possible that a reflexive simplex exhibits an ML degree drop with the standard scaling, as the following example shows.

\begin{example}
\upshape Consider the four-dimensional reflexive simplex with design matrix
\begin{equation*}
A = \begin{bmatrix}
6 & 5 & 5 & 4 & 4 & 4 & 4 & 4 & 4 & 4 & 3 & 3 & 2 & 2 & 2 & 0 \\
3 & 4 & 3 & 6 & 5 & 4 & 4 & 3 & 2 & 3 & 3 & 2 & 3 & 2 & 1 & 0 \\
12 & 9 & 9 & 6 & 6 & 7 & 6 & 7 & 8 & 6 & 5 & 5 & 3 & 3 & 4 & 0
\end{bmatrix}.
\end{equation*}
The degree of the corresponding toric variety is 108, while the ML degree is 107. This can be seen by determining the singularity of the corresponding polynomial $f$ using \texttt{Macaulay2} \cite{Macaulay2}.
\begin{verbatim}
    R = QQ[t_1,t_2,t_3]
    f = t_1^6*t_2^3*t_3^12 + t_1^5*t_2^4*t_3^9 + t_1^5*t_2^3*t_3^9 + 
         t_1^4*t_2^6*t_3^6 + t_1^4*t_2^5*t_3^6 + t_1^4*t_2^4*t_3^7 + 
         t_1^4*t_2^4*t_3^6 + t_1^4*t_2^3*t_3^7 + t_1^4*t_2^2*t_3^8 + 
         t_1^4*t_2^3*t_3^6 + t_1^3*t_2^3*t_3^5 + t_1^3*t_2^2*t_3^5 + 
         t_1^2*t_2^3*t_3^3 + t_1^2*t_2^2*t_3^3 + t_1^2*t_2*t_3^4 + 1
    I = ideal(f,diff(t_1,f),diff(t_2,f),diff(t_3,f))
    gens gb I
\end{verbatim}
The last command returns that the Gröbner basis for $I$ is $\{t_1-1, t_2-1, t_3+1\}$. For all other faces the Gröbner basis is $\{1\}$.

\end{example}

\section{Constructing Reflexive Polytopes} \label{section7}

In Section~\ref{section5} we studied the ML degrees of hypercubes and cross polytopes in any dimension. In this section, we present generalizations of these two families and examine their ML degree.

Given a reflexive polytope $P$ of dimension $d \ge 1$, define
\begin{align*}
\mathcal{A}(P) &\coloneqq P \times [-1,1], \\
\mathcal{B}(P) &\coloneqq \textup{conv} (P \times \{0\}, (0, 0, \ldots, 0, 1), (0, 0, \ldots, 0, -1)),\\
\mathcal{C}(P) &\coloneqq \textup{conv} (P \times [-1,0], (0, 0, \ldots, 0, 1)).
\end{align*}
While $\mathcal{A}$ corresponds to the cube construction, $\mathcal{B}$ is the construction of the cross polytope (using $P = C_d$ and $P = \cross_d$, respectively). Both constructions yield a reflexive polytope of dimension $d+1$. Additionally, \citet{TheDVectorsOfReflexivePolytopesAndOfTheDualPolytopes} showed that $\mathcal{C}(P)$ is reflexive. Since the design matrix is determined by all lattice points, the following lemma is helpful to specify $A$.

\begin{lemma}
Let $P$ be a lattice polytope containing $n$ lattice points. Then $\mathcal{A}(P)$ contains $3n$ lattice points, $\mathcal{B}(P)$ contains $n+2$ lattice points and $\mathcal{C}(P)$ contains $2n+1$ lattice points.
\end{lemma}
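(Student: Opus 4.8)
The plan is to count lattice points slice by slice along the last coordinate. All three polytopes sit inside $\RR^{d}\times[-1,1]$, and any lattice point must have an integer last coordinate, so only the heights $h\in\{-1,0,1\}$ can carry lattice points. For each construction I would identify its horizontal slice at these three heights and add up the lattice points found in each.

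For $\mathcal{A}(P)=P\times[-1,1]$ the count is immediate, since the lattice points of a product are the products of lattice points: each of the $n$ lattice points of $P$ pairs with each of the three integers $\{-1,0,1\}$ in $[-1,1]$, yielding $3n$. No use of reflexivity is required here.

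For the bipyramid $\mathcal{B}(P)$ and the prism-with-apex $\mathcal{C}(P)$ I would use that $P$ is reflexive, so $0\in\mathrm{int}(P)$ is one of its lattice points. Writing a point of the pyramid $\textup{conv}(P\times\{0\},(0,\dots,0,1))$ as $\lambda(p,0)+(1-\lambda)(0,\dots,0,1)$ with $p\in P$ and $\lambda\in[0,1]$, its height is $1-\lambda$ and its horizontal part is $\lambda p$; hence the slice at height $h\in[0,1]$ equals $(1-h)P$, and symmetrically the opposite pyramid gives $(1+h)P$ at height $h\in[-1,0]$. For $\mathcal{B}(P)$ this makes the slice at $h=0$ equal to $P$ (contributing $n$ points) while the slices at $h=\pm1$ collapse to the single point $\{0\}$, namely the two apices, for a total of $n+2$. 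For $\mathcal{C}(P)=\textup{conv}(P\times[-1,0],(0,\dots,0,1))$, the slices at $h=-1$ and $h=0$ are both $P$ (the bottom and top of the prism $P\times[-1,0]$), each contributing $n$ points, while the slice at $h=1$ is $\{0\}$, for a total of $2n+1$.

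The only step needing care is the verification that these slices are exactly as stated and harbour no extra lattice points. Since there is no integer height strictly between $-1$, $0$ and $1$, the intermediate scaled slices $(1\pm h)P$ are never evaluated at lattice heights and so cannot contribute; the slices at $h=\pm1$ reduce to the apex because that height is extremal while all other defining vertices sit at height $0$ or $-1$. The one genuinely substantive point is that the height-$0$ slice of $\mathcal{C}(P)$ is \emph{precisely} $P$ and not a larger set: this follows because the top facet $P\times\{0\}$ of the prism coincides with the base of the apex pyramid, so forming the convex hull adds nothing new at that level, a fact I would confirm by checking that any point of $\mathcal{C}(P)$ at height $0$ is a convex combination of the form $\tfrac{1}{1-t}\,p$ with $t\in[-1,0]$ and $p\in P$, which lies in $P$ by convexity since $0\in P$. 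This is the main (and only mild) obstacle; the remainder is bookkeeping.
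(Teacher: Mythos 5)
Your proof is correct and matches the paper, whose entire proof is the one-line ``Follows from the respective constructions''; your slice-by-slice count at heights $h\in\{-1,0,1\}$ simply supplies the details left implicit. You also rightly flag the only subtle point, namely that the height-$0$ slices of $\mathcal{B}(P)$ and $\mathcal{C}(P)$ are exactly $P$ because $0\in P$ (guaranteed here since $P$ is reflexive), without which extra lattice points could appear.
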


\begin{proof}
Follows from the respective constructions.
\end{proof}

We introduce the following notation for the repeated application of $\mathcal{A}$, $\mathcal{B}$ and $\mathcal{C}$. For $k \in \NN$ and a reflexive polytope $P$ of dimension $d \ge 1$, we define the $(d+k)$-dimensional polytope
\begin{equation*}
\mathcal{A}^{k} (P) \coloneqq \underbrace{\mathcal{A}(\mathcal{A}(\ldots \mathcal{A}(\mathcal{A}}_{k \textup{ times}}(P))\ldots))
\end{equation*}
by applying construction $\mathcal{A}$ exactly $k$ times. Note $\mathcal{A}^ 0(P) = P$. The polytopes $\mathcal{B}^{k} (P)$ and $\mathcal{C}^{k} (P)$ are defined analogously.

We begin our investigations by specifying an upper bound for the ML degree.

\begin{proposition} \label{proposition26}
Let $P$ be a lattice polytope of dimension $d \ge 1$. For $k \ge 1$,
\begin{align*}
\textup{deg}(\mathcal{A}^{k}(P)) &= 2^k \cdot \frac{(d+k)!}{d!} \cdot \textup{deg}(P), \\
\textup{deg}(\mathcal{B}^{k}(P)) &= 2^k \cdot \textup{deg}(P), \\
\textup{deg}(\mathcal{C}^{k}(P)) &= \left (1 + \frac{(d+k)!}{d!} \right ) \cdot \textup{deg}(P).
\end{align*}
\end{proposition}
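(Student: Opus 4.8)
The overall strategy is to invoke Theorem~\ref{theorem6}, which identifies $\textup{deg}(V)$ with the normalized volume $\textup{nvol}(P)=(\dim P)!\,\textup{vol}(P)$, and to reduce each of the three formulas to a \emph{single-step} volume recursion that is then iterated by induction on $k$. Throughout I assume $0\in\textup{int}(P)$, as holds for reflexive $P$; this is exactly what makes the apexes in $\mathcal{B}$ and $\mathcal{C}$ project into the relevant facet.

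First I would record the three one-step recursions for an arbitrary polytope $Q$ of dimension $m$ with $0\in\textup{int}(Q)$. Since $\mathcal{A}(Q)=Q\times[-1,1]$ is a prism of height $2$, we get $\textup{vol}(\mathcal{A}(Q))=2\,\textup{vol}(Q)$ and hence $\textup{nvol}(\mathcal{A}(Q))=2(m+1)\,\textup{nvol}(Q)$. Since $\mathcal{B}(Q)$ is the bipyramid over $Q$ with apexes at height $\pm1$, each pyramid contributes $\tfrac{1}{m+1}\textup{vol}(Q)$, so $\textup{vol}(\mathcal{B}(Q))=\tfrac{2}{m+1}\textup{vol}(Q)$ and $\textup{nvol}(\mathcal{B}(Q))=2\,\textup{nvol}(Q)$. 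Finally $\mathcal{C}(Q)$ splits as the prism $Q\times[-1,0]$ of height $1$ together with the pyramid over the top facet $Q\times\{0\}$ with apex at height $1$, giving $\textup{vol}(\mathcal{C}(Q))=\bigl(1+\tfrac{1}{m+1}\bigr)\textup{vol}(Q)$ and therefore $\textup{nvol}(\mathcal{C}(Q))=(m+2)\,\textup{nvol}(Q)$.

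Iterating is then pure bookkeeping, using $\dim\mathcal{X}^{j}(P)=d+j$ for $\mathcal{X}\in\{\mathcal{A},\mathcal{B},\mathcal{C}\}$. The $\mathcal{A}$-multipliers telescope to $\prod_{j=1}^{k}2(d+j)=2^k\tfrac{(d+k)!}{d!}$ and the $\mathcal{B}$-multipliers to $\prod_{j=1}^{k}2=2^k$, which are exactly the first two stated formulas. For $\mathcal{C}$ the same scheme gives the product $\prod_{j=1}^{k}(d+j+1)=\tfrac{(d+k+1)!}{(d+1)!}$; the base case $k=1$ returns $d+2=1+\tfrac{(d+1)!}{d!}$, and the remaining task is to match this iterated product with the claimed closed form, a step where I would check the algebra for $k\ge 2$ with particular care.

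The main obstacle is the $\mathcal{C}$ case. Unlike $\mathcal{A}$ (a Cartesian product with a segment, whose volume is elementary) and $\mathcal{B}$ (a symmetric bipyramid), the polytope $\mathcal{C}(Q)$ is an asymmetric ``prism-with-a-cap,'' so I must justify that its convex hull decomposes cleanly into one prism and one pyramid, with no additional facets. The key point is that $0\in\textup{int}(Q)$ forces the apex $(0,\dots,0,1)$ to project into the interior of the top facet $Q\times\{0\}$, which validates the decomposition; and since $\mathcal{C}$ preserves the property $0\in\textup{int}(\cdot)$, the one-step recursion remains applicable at every stage of the iteration. Establishing this stability of the decomposition under repeated application, together with the accompanying volume algebra, is the delicate part of the argument, whereas the telescoping itself is routine once the per-step multiplier is fixed.
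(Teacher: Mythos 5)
Your strategy coincides with the paper's: reduce each formula via Theorem~\ref{theorem6} to a volume recursion for one application of $\mathcal{A}$, $\mathcal{B}$, $\mathcal{C}$, then iterate. Your one-step multipliers are all correct, and your treatments of $\mathcal{A}$ and $\mathcal{B}$ reproduce the first two stated formulas. You are also right, and more careful than the paper, to insist on $0\in\textup{int}(P)$: the proposition is stated for arbitrary lattice polytopes, but without the origin in $P$ the bipyramid and prism-with-cap decompositions fail together with the volume recursions --- for $P=[5,6]\subset\mathbb{R}$, the bipyramid $\textup{conv}(P\times\{0\},(0,\pm 1))$ is the triangle $\textup{conv}((6,0),(0,1),(0,-1))$ of area $6$, not $\tfrac{2}{2}\cdot 1=1$.

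The step you flagged for ``particular care'' is exactly where a real discrepancy sits, and you should resolve it in your own favor: your telescoped product is correct and the proposition's third formula is wrong for $k\ge 2$. Iterating $\textup{nvol}(\mathcal{C}(Q))=(m+2)\,\textup{nvol}(Q)$ from dimension $d$ gives $\textup{deg}(\mathcal{C}^k(P))=\frac{(d+k+1)!}{(d+1)!}\,\textup{deg}(P)$, which agrees with the claimed $1+\frac{(d+k)!}{d!}$ only at $k=1$, where both equal $d+2$. Concretely, take $P=C_1=[-1,1]$, so $d=1$ and $\textup{deg}(P)=2$: then $Q=\mathcal{C}(P)$ is the pentagon with vertices $(\pm 1,-1),(1,0),(0,1),(-1,0)$, of area $3$, and $\mathcal{C}^2(P)=\textup{conv}\bigl(Q\times[-1,0],(0,0,1)\bigr)$ has volume $3+\tfrac13\cdot 3=4$, hence degree $3!\cdot 4=24=\frac{4!}{2!}\cdot 2$, whereas the proposition predicts $\bigl(1+\tfrac{3!}{1!}\bigr)\cdot 2=14$; the latter is impossible already because the prism $Q\times[-1,0]$ alone has normalized volume $18$. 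The paper's proof makes the source of the error visible: it asserts $\textup{vol}(\mathcal{C}^k(P))=\bigl(\frac{d!}{(d+k)!}+1\bigr)\textup{vol}(P)$, which iterates the cap as though each new pyramid sat over the original $P$ rather than over the full polytope $\mathcal{C}^{j-1}(P)$ built so far; the correct one-step recursion $\textup{vol}(\mathcal{C}(Q))=\frac{m+2}{m+1}\,\textup{vol}(Q)$ telescopes to $\frac{d+k+1}{d+1}\,\textup{vol}(P)$, i.e.\ to your formula. Since the paper only ever invokes the $\mathcal{C}$ case at $k=1$ (Table~\ref{table4}), its downstream computations are unaffected, but your closed form is the one that should replace the third displayed equation; do not leave the reconciliation as an open ``algebra check,'' since the two expressions genuinely differ for every $k\ge 2$.
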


\begin{proof}
By construction $\mathcal{A}$, $\textup{vol} (\mathcal{A}^k(P)) = 2^k \cdot \textup{vol} (P)$. It follows
\begin{equation*}
\textup{nvol} (\mathcal{A}^k(P)) = 2 \cdot \frac{(d+k)!}{d!} \cdot \textup{nvol} (P).
\end{equation*}

The statement for construction $\mathcal{B}$ follows analogously since $\mathcal{B}(P)$ is a bipyramid over $P$. Therefore,
\begin{equation*}
\textup{vol}(\mathcal{B}^k(P)) = 2^k \cdot \frac{d!}{(d+k)!} \cdot \textup{vol}(P).
\end{equation*}
It follows $\textup{nvol}(\mathcal{B}^k(P)) = 2^k \cdot \textup{nvol}(P)$.

The volume of $\mathcal{C}(P)$ is the sum of the volume of $P \times [-1,0]$ and a pyramid over P. Therefore,
\begin{equation*}
\textup{vol} (\mathcal{C}^k(P)) = \left (\frac{d!}{(d+k)!} + 1 \right ) \cdot \textup{vol}(P).
\end{equation*}
It follows
\begin{equation*}
\textup{nvol}  (\mathcal{C}^k(P)) = \left (1 + \frac{(d+k)!}{d!} \right ) \cdot \textup{nvol}(P). \qedhere
\end{equation*} 
\end{proof}

If $P$ exhibits an ML degree drop, then Proposition \ref{proposition26} gives a strict upper bound on the ML degree of $\mathcal{A}(P)$ and $\mathcal{C}(P)$.

\begin{proposition}
Let $P$ be a lattice polytope. If $P$ exhibits an ML degree drop, then $\mathcal{A}(P)$ and $\mathcal{C}(P)$ exhibit an ML degree drop. In particular, $\textup{mldrop} (\mathcal{A}(P)), \textup{mldrop} (\mathcal{C}(P)) \ge \textup{mldrop}(P)$.
\end{proposition}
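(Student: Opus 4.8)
The common thread is that $P$ occurs as a \emph{facet} of each new polytope: the bottom copy $P\times\{-1\}$ is a face both of $\mathcal{A}(P)=P\times[-1,1]$ and of $\mathcal{C}(P)=\mathrm{conv}(P\times[-1,0],(0,\dots,0,1))$. (By contrast $P\times\{0\}$ is only the equatorial slice of the bipyramid $\mathcal{B}(P)$, not a face, which explains why $\mathcal{B}$ is omitted here.) I would first dispatch the qualitative claim uniformly through the principal $A$-determinant. The $A$-discriminant of a face is unchanged by translating its point configuration and by adjoining a torus variable that enters only as an overall monomial factor: for the face $P\times\{-1\}$ one has $f_{c,P\times\{-1\}}=\theta_{d+1}^{-1}f_{c,P}(\theta)$, so its variety of scalings coincides with that of $P$. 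Hence $\Delta_{\Gamma\times\{-1\}}=\Delta_{\Gamma}$ for every face $\Gamma$ of $P$, and the subproduct $\prod_{\Gamma}\Delta_{\Gamma\times\{-1\}}$ of both $E_{A_{\mathcal{A}(P)}}$ and $E_{A_{\mathcal{C}(P)}}$ evaluates at the standard scaling to $E_{A_P}(\mathbf{1})$. Since $P$ exhibits an ML degree drop, Theorem~\ref{thm:Adet} gives $E_{A_P}(\mathbf{1})=0$, forcing $E_{A_{\mathcal{A}(P)}}(\mathbf{1})=E_{A_{\mathcal{C}(P)}}(\mathbf{1})=0$ and, again by Theorem~\ref{thm:Adet}, a drop for both $\mathcal{A}(P)$ and $\mathcal{C}(P)$.

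For the sharper inequality on $\mathcal{A}(P)$ I would bypass discriminants and compute exactly. Since $\mathcal{A}(P)=P\times C_1$, Corollary~\ref{cor:product} and $\textup{mldeg}(C_1)=2$ (Proposition~\ref{prop:basecase} with $t=1$) give $\textup{mldeg}(\mathcal{A}(P))=2\,\textup{mldeg}(P)$, while Proposition~\ref{proposition26} with $k=1$ gives $\textup{deg}(\mathcal{A}(P))=2(d+1)\textup{deg}(P)$. Therefore
\begin{equation*}
\textup{mldrop}(\mathcal{A}(P))=2\bigl[(d+1)\textup{deg}(P)-\textup{mldeg}(P)\bigr]\ge 2\bigl[\textup{deg}(P)-\textup{mldeg}(P)\bigr]=2\,\textup{mldrop}(P),
\end{equation*}
which is even stronger than claimed and simultaneously reproves the qualitative statement for $\mathcal{A}(P)$.

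The inequality for $\mathcal{C}(P)$ is the real obstacle, since $\mathcal{C}(P)$ is not a Cartesian product and no closed formula for its ML degree is available. Using $\textup{deg}(\mathcal{C}(P))=(d+2)\textup{deg}(P)$ from Proposition~\ref{proposition26}, the target $\textup{mldrop}(\mathcal{C}(P))\ge\textup{mldrop}(P)$ is equivalent to $\textup{mldeg}(\mathcal{C}(P))\le(d+1)\textup{deg}(P)+\textup{mldeg}(P)$, i.e.\ to improving the trivial bound $\textup{mldeg}\le\textup{deg}$ by exactly $\textup{mldrop}(P)$. The plan is to show that the facet $P$ localizes a drop of at least $\textup{mldrop}(P)$, via the singularity--drop correspondence of \citep{TheMaximumLikelihoodDegree,LikelihoodGeometry}: the $\textup{mldrop}(P)$ singular solutions carried by the stratum $P\times\{-1\}$ should account for at least that many critical points lost from the generic count $\textup{deg}(\mathcal{C}(P))$. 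Concretely, writing $g=\sum_i\theta^{a_i}$ and $g_k=\theta_k\partial_{\theta_k}g$ for the data of $P$, elimination of $s$ and of the new coordinate from the score equations of $\mathcal{C}(P)$ reduces them to the ``$P$-system'' $g_k(\theta)/g(\theta)=b_k/x$ (governed precisely by the likelihood geometry of $P$, with $x$ an auxiliary unknown) together with one coupling equation relating $x$ to $g(\theta)$. The hard part will be to count these coupled solutions and certify that the non-generic behaviour inherited from $P$ genuinely removes at least $\textup{mldrop}(P)$ of them; establishing this face-additivity lower bound, rather than merely the existence of some drop, is where the main work lies.
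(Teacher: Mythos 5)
Your qualitative argument coincides with the paper's, and is in fact tighter. The paper likewise exploits that $P\times\{-1\}$ and $P\times\{1\}$ are facets of $\mathcal{A}(P)$ and that $P\times\{-1\}$ is a facet of $\mathcal{C}(P)$, then concludes via Theorem \ref{thm:Adet}; but it simply writes $\Delta_{\Gamma_1}=\Delta_{\Gamma_2}=0$, which taken literally presumes the drop of $P$ is witnessed by the discriminant of the \emph{whole} polytope $P$. Your identification $\Delta_{\Gamma\times\{-1\}}=\Delta_{\Gamma}$ for \emph{every} face $\Gamma$ of $P$ (via $f_{c,\Gamma\times\{-1\}}=\theta_{d+1}^{-1}f_{c,\Gamma}$), so that the subproduct of the principal $A$-determinant over the faces of the facet recovers $E_{A_P}(\mathbf{1})=0$, correctly covers drops caused by any face. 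Your observation that $P\times\{0\}$ is not a face of $\mathcal{B}(P)$ is also the right explanation for why $\mathcal{B}$ is excluded. For $\mathcal{A}(P)$ you then depart from the paper's proof: combining Corollary \ref{cor:product} with Proposition \ref{proposition26} gives the exact value $\textup{mldrop}(\mathcal{A}(P))=2\bigl[(d+1)\,\textup{deg}(P)-\textup{mldeg}(P)\bigr]\ge 2\,\textup{mldrop}(P)$, which is stronger than the stated inequality and even shows $\mathcal{A}(P)$ drops regardless of whether $P$ does (consistent with Table \ref{table4}). The paper only records this multiplicativity later, in Corollary \ref{theorem29}, and does not use it inside this proof.

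The genuine gap is exactly where you flagged it: the inequality $\textup{mldrop}(\mathcal{C}(P))\ge\textup{mldrop}(P)$ is never established. Your coupled-system sketch stops before the required count, so as written the proposal proves the existence of a drop for $\mathcal{C}(P)$ but not the quantitative bound. You should know, however, that the paper does no more here: its entire justification of the ``in particular'' clause is the sentence that $\mathcal{A}(P)$ and $\mathcal{C}(P)$ ``imply at least as many singularities as $P$,'' i.e.\ it asserts that the singular points carried by the facet $P\times\{-1\}$ (which, by your computation, are exactly those of $E_{A_P}$) each account for at least one lost critical point, implicitly invoking the singularity--drop correspondence of the cited references without any multiplicity analysis. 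So your ``plan'' is essentially the paper's argument made explicit. To actually close it, one needs the quantitative relationship between the size of the drop and the singularities along boundary strata (the Euler-characteristic/multiplicity formulation in the references on toric ML degrees), since the raw number of singular points does not by itself obviously lower-bound the drop; note also that your $\mathcal{A}$-argument, unlike the paper's, sidesteps this issue entirely, which is a genuine advantage of your route for that half of the statement.
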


\begin{proof}
Polytope $\mathcal{A}(P)$ contains the facets $\Gamma_1 \coloneqq P \times \{-1\}$ and $\Gamma_2 \coloneqq P \times \{ 1 \}$. Since $P$ exhibits an ML degree drop, $\Delta_{\Gamma_1} = \Delta_{\Gamma_2} = 0$ and therefore $E_{\mathcal{A}(P)}(c) = 0$ for the standard scaling. Polytope $\mathcal{C}(P)$ contains the facet $\Gamma_1 \coloneqq P \times \{-1\}$ and therefore $E_{\mathcal{C}(P)}(c) = 0$ for the standard scaling. In particular, $\mathcal{A}(P)$ and $\mathcal{C}(P)$ imply at least as many singularities as $P$.
\end{proof}

\begin{figure}[H]
\centering
\begin{subfigure}[b]{0.32\textwidth}
\centering
\includegraphics[width=\textwidth]{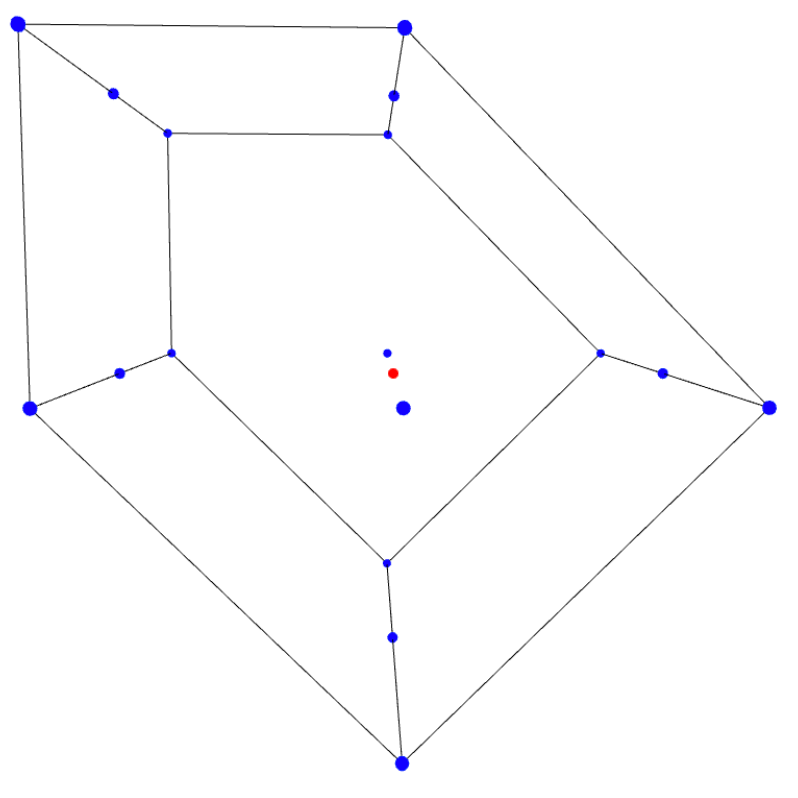}
\end{subfigure}
\hfill
\begin{subfigure}[b]{0.32\textwidth}
\centering
\includegraphics[width=\textwidth]{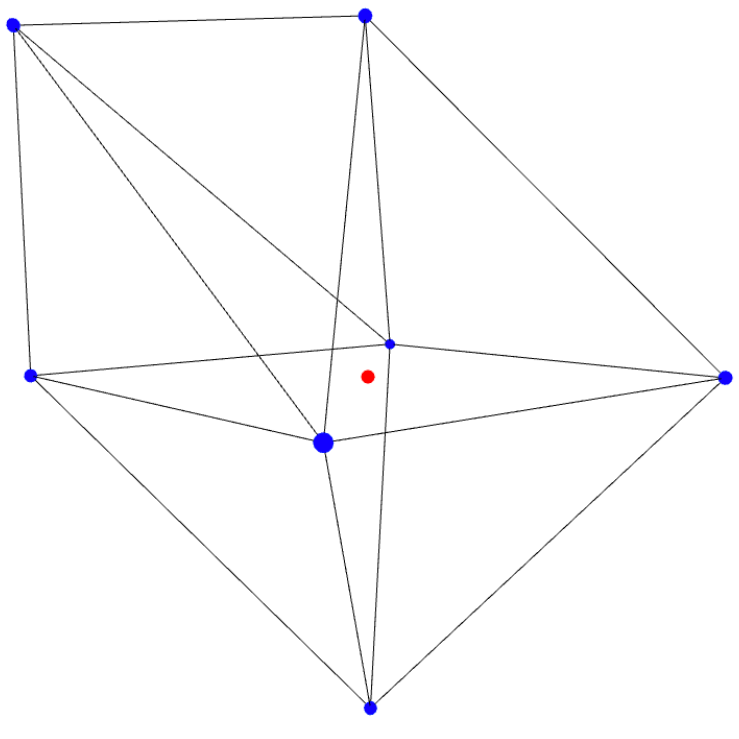}
\end{subfigure}
\hfill
\begin{subfigure}[b]{0.32\textwidth}
\centering
\includegraphics[width=\textwidth]{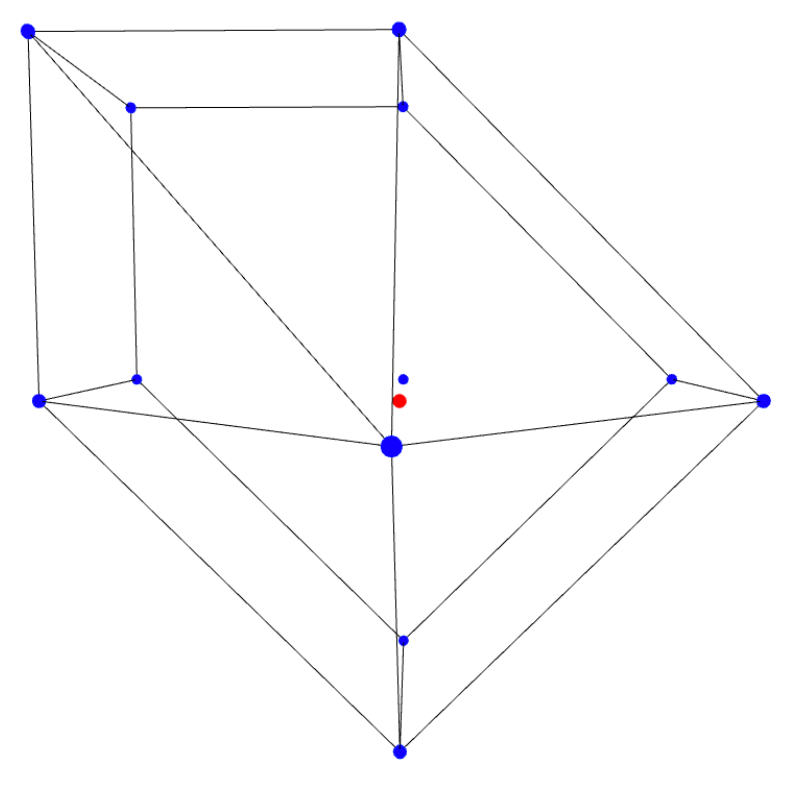}
\end{subfigure}
\centering
\begin{subfigure}[b]{0.32\textwidth}
(a) Polytope $\mathcal{A}(P_{5\textup{a }})$.
\end{subfigure}
\hfill
\begin{subfigure}[b]{0.32\textwidth}
(b) Polytope $\mathcal{B}(P_{5\textup{a }})$.
\end{subfigure}
\hfill
\begin{subfigure}[b]{0.32\textwidth}
(c) Polytope $\mathcal{C}(P_{5\textup{a }})$.
\end{subfigure}
\caption[]{\small Constructions $\mathcal{A}$, $\mathcal{B}$ and $\mathcal{C}$ based on the reflexive polygon of type 5a. The red point corresponds to the interior lattice point. The visualizations were created using Polymake \citep{ComputingConvexHullsAndCountingIntegerPointsWithPolymake, polymakeAFrameworkForAnalyzingConvexPolytopes}.} \label{figure5}
\end{figure}

We computed the degree and the ML degree for the three constructions based on the 16 reflexive polygons. The results are shown in Table \ref{table4}. While the occurrence of an ML degree drop is preserved under construction $\mathcal{A}$ and $\mathcal{C}$, the reflexive polygons of type 5a and 8a show that this is not true for $\mathcal{B}$.

\begin{table}[h!]
\begin{minipage}{0.49\textwidth}
\centering
\begin{tabular}{c c c} 
Polytope & mldeg(V) & deg(V) \\ [0.2ex] 
\hline \\ [-1ex] 
$P_3$ & 3 & 3 \\ 
$\mathcal{A}(P_3)$ & 6 & 18 \\
$\mathcal{B}(P_3)$ & 6 & 6 \\
$\mathcal{C}(P_3)$ & 8 & 12 \\ [1ex] 

$P_{4\textup{a}}$ & 4 & 4 \\ 
$\mathcal{A}(P_{4\textup{a}})$ & 8 & 24 \\
$\mathcal{B}(P_{4\textup{a}})$ & 8 & 8 \\
$\mathcal{C}(P_{4\textup{a}})$ & 12 & 16 \\ [1ex] 

$P_{4\textup{b}}$ & 4 & 4 \\ 
$\mathcal{A}(P_{4\textup{b}})$ & 8 & 24 \\
$\mathcal{B}(P_{4\textup{b}})$ & 8 & 8 \\
$\mathcal{C}(P_{4\textup{b}})$ & 12 & 16 \\ [1ex] 

$P_{4\textup{c}}$ & 4 & 4 \\ 
$\mathcal{A}(P_{4\textup{c}})$ & 8 & 24 \\
$\mathcal{B}(P_{4\textup{c}})$ & 8 & 8 \\
$\mathcal{C}(P_{4\textup{c}})$ & 12 & 16 \\ [1ex] 

$P_{5\textup{a}}$ & 3 & 5 \\ 
$\mathcal{A}(P_{5\textup{a}})$ & 6 & 30 \\
$\mathcal{B}(P_{5\textup{a}})$ & 10 & 10 \\
$\mathcal{C}(P_{5\textup{a}})$ & 11 & 20 \\ [1ex] 

$P_{5\textup{b}}$ & 5 & 5 \\ 
$\mathcal{A}(P_{5\textup{b}})$ & 10 & 30 \\
$\mathcal{B}(P_{5\textup{b}})$ & 10 & 10 \\
$\mathcal{C}(P_{5\textup{b}})$ & 15 & 20 \\ [1ex] 

$P_{6\textup{a}}$ & 6 & 6 \\ 
$\mathcal{A}(P_{6\textup{a}})$ & 12 & 36 \\
$\mathcal{B}(P_{6\textup{a}})$ & 10 & 12 \\
$\mathcal{C}(P_{6\textup{a}})$ & 18 & 24 \\ [1ex] 

$P_{6\textup{b}}$ & 6 & 6 \\ 
$\mathcal{A}(P_{6\textup{b}})$ & 12 & 36 \\
$\mathcal{B}(P_{6\textup{b}})$ & 12 & 12 \\
$\mathcal{C}(P_{6\textup{b}})$ & 18 & 24 \\ [1ex] 
\end{tabular}
\end{minipage}
\begin{minipage}{0.49\textwidth}
\centering
\begin{tabular}{c c c} 
Polytope & mldeg(V) & deg(V) \\ [0.2ex] 
\hline \\ [-1ex] 
$P_{6\textup{c}}$ & 6 & 6 \\ 
$\mathcal{A}(P_{6\textup{c}})$ & 12 & 36 \\
$\mathcal{B}(P_{6\textup{c}})$ & 12 & 12 \\
$\mathcal{C}(P_{6\textup{c}})$ & 18 & 24 \\ [1ex] 

$P_{6\textup{d}}$ & 6 & 6 \\ 
$\mathcal{A}(P_{6\textup{d}})$ & 12 & 36 \\
$\mathcal{B}(P_{6\textup{d}})$ & 11 & 12 \\
$\mathcal{C}(P_{6\textup{d}})$ & 18 & 24 \\ [1ex] 

$P_{7\textup{a}}$ & 7 & 7 \\ 
$\mathcal{A}(P_{7\textup{a}})$ & 14 & 42 \\
$\mathcal{B}(P_{7\textup{a}})$ & 14 & 14 \\
$\mathcal{C}(P_{7\textup{a}})$ & 21 & 28 \\ [1ex] 

$P_{7\textup{b}}$ & 7 & 7 \\ 
$\mathcal{A}(P_{7\textup{b}})$ & 14 & 42 \\
$\mathcal{B}(P_{7\textup{b}})$ & 14 & 14 \\
$\mathcal{C}(P_{7\textup{b}})$ & 21 & 28 \\ [1ex] 

$P_{8\textup{a}}$ & 4 & 8 \\ 
$\mathcal{A}(P_{8\textup{a}})$ & 8 & 48 \\
$\mathcal{B}(P_{8\textup{a}})$ & 16 & 16 \\
$\mathcal{C}(P_{8\textup{a}})$ & 16 & 32 \\ [1ex] 

$P_{8\textup{b}}$ & 8 & 8 \\ 
$\mathcal{A}(P_{8\textup{b}})$ & 16 & 48 \\
$\mathcal{B}(P_{8\textup{b}})$ & 16 & 16 \\
$\mathcal{C}(P_{8\textup{b}})$ & 24 & 32 \\ [1ex] 

$P_{8\textup{c}}$ & 8 & 8 \\ 
$\mathcal{A}(P_{8\textup{c}})$ & 16 & 48 \\
$\mathcal{B}(P_{8\textup{c}})$ & 16 & 16 \\
$\mathcal{C}(P_{8\textup{c}})$ & 24 & 32 \\ [1ex] 

$P_9$ & 9 & 9 \\ 
$\mathcal{A}(P_9)$ & 18 & 54 \\
$\mathcal{B}(P_9)$ & 18 & 18 \\
$\mathcal{C}(P_9)$ & 27 & 36 \\ [1ex] 
\end{tabular}
\end{minipage}
\caption{The ML degree and the degree of $\mathcal{A}(P), \mathcal{B}(P)$ and $\mathcal{C}(P)$ for each reflexive polygon $P$.} \label{table4}
\end{table}

\begin{example}[Type 5a polygon] \upshape
According to \cite{MaximumLikelihoodEstimationOfToricFanoVarieties}, $\textup{deg} (P_{5\textup{a}}) = 5$ and $\textup{mldeg} (P_{5\textup{a }}) = 3$. The Polytopes $\mathcal{A}(P_{5\textup{a}})$, $\mathcal{B}(P_{5\textup{a}})$ and $\mathcal{C}(P_{5\textup{a}})$ are shown in Figure \ref{figure5}. The corresponding ML degrees and degrees of the toric variety are listed in Table \ref{table4}. In particular, $\textup{deg} (\mathcal{B}(P_{5\textup{a}})) = \textup{mldeg} (\mathcal{B}(P_{5\textup{a}})) = 10$. \end{example}

The multiplicity of the construction $\mathcal{A}$ shown in Table \ref{table4} is explained by the following theorem. Since $\mathcal{A}$ is related to the cube, we use the ideas and notations from Section \ref{section4}.

\begin{corollary} \label{theorem29}
Let P be a lattice polytope of dimension $d \ge 1$. For $k \in \NN$,
\begin{equation*}
\textup{mldeg}(\mathcal{A}^k(P)) = 2^k \textup{mldeg} (P). 
\end{equation*} 
\end{corollary}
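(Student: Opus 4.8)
The plan is to recognize construction $\mathcal{A}$ as an iterated Cartesian product with the one-dimensional cube and then invoke the multiplicativity of the ML degree. First I would observe that $\mathcal{A}(P) = P \times [-1,1] = P \times C_1$, and by unwinding the recursive definition of $\mathcal{A}^k$ one obtains
\begin{equation*}
\mathcal{A}^k(P) = P \times \underbrace{[-1,1] \times \cdots \times [-1,1]}_{k \text{ times}} = P \times C_k,
\end{equation*}
where $C_k$ is the $k$-dimensional hypercube. This identification is the only genuinely combinatorial step, and it is where I would spend the care: one must check that the lattice points of $\mathcal{A}^k(P)$ are exactly the products of lattice points of $P$ with lattice points of $C_k$, so that the design matrix of $\mathcal{A}^k(P)$ coincides with that of the product $P \times C_k$ to which Corollary \ref{cor:product} applies. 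This is consistent with the lattice point count in the preceding lemma, since iterating $\mathcal{A}$ multiplies the number of lattice points by $3$ each time and $C_k$ has $3^k$ lattice points.

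With this identification in hand, I would apply Corollary \ref{cor:product}, which gives multiplicativity of the ML degree under Cartesian products of lattice polytopes, together with Theorem \ref{thm:mldegcube}, which states $\textup{mldeg}(C_k) = 2^k$. These combine to yield
\begin{equation*}
\textup{mldeg}(\mathcal{A}^k(P)) = \textup{mldeg}(P \times C_k) = \textup{mldeg}(P) \cdot \textup{mldeg}(C_k) = 2^k \, \textup{mldeg}(P).
\end{equation*}

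Alternatively, and perhaps more cleanly, one can argue directly by induction on $k$ without passing through the closed formula for the cube: the base case $k=0$ is trivial since $\mathcal{A}^0(P) = P$, and for the inductive step one writes $\mathcal{A}^k(P) = \mathcal{A}^{k-1}(P) \times C_1$ and applies Corollary \ref{cor:product} together with $\textup{mldeg}(C_1) = 2$, which is the case $t = 1$ of Proposition \ref{prop:basecase}. I do not expect any real obstacle here: the entire content of the statement is packaged in the product formula and in the known ML degree of the one-dimensional cube, so the only point requiring attention is the bookkeeping that identifies the iterated construction $\mathcal{A}^k$ with the product $P \times C_k$ at the level of design matrices.
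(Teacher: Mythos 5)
Your proposal is correct and follows essentially the same route as the paper: identifying $\mathcal{A}^k(P) = P \times C_k$ and then combining Corollary \ref{cor:product} with Theorem \ref{thm:mldegcube}, with your explicit check that the lattice points (hence the design matrices) of $\mathcal{A}^k(P)$ and $P \times C_k$ agree being a careful spelling-out of what the paper leaves implicit. Your alternative induction via $\mathcal{A}^k(P) = \mathcal{A}^{k-1}(P) \times C_1$ is a minor repackaging of the same ingredients, not a genuinely different argument.
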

\begin{proof}
We note that since $\mathcal{A}(P) = P \times [-1,1]$, then $\mathcal{A}^k(P) = P \times C_k$, where $C_k$ is a $k$-dimensional cube in the appropriate dimension. Then combining Corollary \ref{cor:product} with Theorem \ref{thm:mldegcube} we have that
\begin{equation*}
   \textup{mldeg}(\mathcal{A}^k(P)) = \textup{mldeg}(P) \times \textup{mldeg}(C_k) = 2^k \textup{mldeg} (P). \qedhere
\end{equation*} 
\end{proof}

Using the ML degree one scaling of $C_d$ explained in Theorem \ref{theorem18}, we get the following statement for construction $\mathcal{A}$.

\begin{theorem}
Let $P$ be a lattice polytope of dimension $d \ge 1$. For $k \ge 1$ there exists a scaling such that the scaled model of $\mathcal{A}^k (P)$ has ML degree $\textup{mldeg} (P)$.
\end{theorem}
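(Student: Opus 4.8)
The plan is to exploit the product structure $\mathcal{A}^k(P) = P \times C_k$ already used in the proof of Corollary \ref{theorem29}, and to combine the multiplicativity of the ML degree with the explicit ML degree one scaling of the cube furnished by Theorem \ref{theorem18}. The guiding idea is that we are free to scale the two Cartesian factors independently: we keep $P$ with its standard scaling and equip the $C_k$ factor with a scaling that forces its ML degree to be one.

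First I would record that the lattice points of a product polytope factor, namely $(P \times C_k) \cap \mathbb{Z}^{d+k} = (P \cap \mathbb{Z}^d) \times (C_k \cap \mathbb{Z}^k)$, so that the columns of the design matrix of $\mathcal{A}^k(P)$ are naturally indexed by pairs $(p_i, v_j)$ with $p_i$ a lattice point of $P$ and $v_j$ a lattice point of $C_k$. A \emph{product scaling} is then one of the form $c_{(i,j)} = c^P_i \, c^{C_k}_j$, and the induced parametrization factors as $\psi^c(s,\theta_P,\theta_{C_k})_{(i,j)} = s\,(c^P_i\,\theta_P^{p_i})(c^{C_k}_j\,\theta_{C_k}^{v_j})$.

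Next I would extend Corollary \ref{cor:product} from standard scalings to product scalings, proving that $\textup{mldeg}\big((P \times Q)^{c^P \times c^Q}\big) = \textup{mldeg}(V_P^{c^P}) \cdot \textup{mldeg}(V_Q^{c^Q})$. The verification is a verbatim repetition of the proof of Corollary \ref{cor:product}, with every monomial $\theta_P^{p_i}$ (resp. $\theta_Q^{q_j}$) replaced by its scaled version $c^P_i\,\theta_P^{p_i}$ (resp. $c^Q_j\,\theta_Q^{q_j}$). Writing out the score equations $A_{P\times Q}\,\psi^c(s,\theta)=b$, the first equation reads $s\,(\sum_i c^P_i\theta_P^{p_i})(\sum_j c^Q_j\theta_Q^{q_j})=1$; after solving for $s$ and substituting into the two remaining blocks, the system decouples exactly into the score equations for the scaled model of $P$ and the scaled model of $Q$, so the solution count is the product of the two ML degrees. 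This is the step I expect to require the most care, since one must confirm that inserting the scaling constants does not obstruct the decoupling; but it is essentially the same bookkeeping as in the original corollary, and only a product-form scaling (not an arbitrary one) is needed for the existence statement.

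Finally, I would apply this with $Q = C_k$. By Theorem \ref{theorem18} there is a product-form scaling $c^{C_k}$ of the cube, given by $c_{m,-1}=c_{m,0}^2/(4c_{m,1})$ for each coordinate $m \in [k]$, with $\textup{mldeg}(V_{C_k}^{c^{C_k}}) = 1$. Taking $c^P$ to be the standard scaling of $P$, the product scaling $c = c^P \times c^{C_k}$ on $\mathcal{A}^k(P) = P \times C_k$ satisfies
\[
\textup{mldeg}\big(\mathcal{A}^k(P)^c\big) = \textup{mldeg}(P)\cdot \textup{mldeg}(V_{C_k}^{c^{C_k}}) = \textup{mldeg}(P)\cdot 1 = \textup{mldeg}(P),
\]
which establishes the claim.
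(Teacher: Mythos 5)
Your proposal is correct and follows essentially the same route as the paper: the paper also takes the product scaling that is standard on the $P$-factor and uses the Theorem \ref{theorem18} scaling on the $k$ cube coordinates, then solves for $s$ and decouples the score equations so that the last $k$ equations each have exactly one solution, leaving the $\textup{mldeg}(P)$ solutions of the score equations of $P$. The only difference is presentational: you package the decoupling as a scaled version of Corollary \ref{cor:product}, whereas the paper carries out the same computation directly, analogously to Lemma \ref{lemma17}.
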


\begin{proof}
Let $A$ be the design matrix of $P$. For generic $u$, the system $A' \psi_{A} (s, \theta) = [1,b]^T$ has $\textup{mldeg} (P)$ solutions and is of the form
$$
s \cdot f = 1, \qquad
s \cdot p_i = b_i, \quad i \in [d], 
$$
where $p_1, p_2, \ldots, p_d$ are Laurent polynomials in $\theta_1, \theta_2, \ldots, \theta_d$.

Let $A_k$ be the design matrix of $\mathcal{A}^k (P)$. For $i \in [d+1:d+k]$, let $c_{i,-1}, c_{i,0}, c_{i,1} \in \mathbb{C}^*$ such that $c_{i,-1} = c_{k,0}^2/(4 c_{k,1})$. Consider the scaling $c$ with
\begin{equation*}
c_j = \prod_{i=d+1}^{d+k} c_{i,a_{ij}},
\end{equation*} 
where $a_{ij}$ is the $(i,j)$-entry of $A_k$. Analogously to Lemma \ref{lemma17}, we write $A'_k \psi_{A_k}^c (s, \theta) = [1,b]^T$ as
\begin{align}
\frac{1}{f} p_i &= b_i, \quad i \in [d] \notag \\ 
\frac{1}{c_{i,0} + c_{i,1} \theta_i + c_{i,-1} \theta_i^{-1}} (c_{i,1} \theta_i -  c_{i,-1} \theta_i^{-1}) &= b_i, \quad i \in [d+1:d+k], \label{gl18} 
\end{align}
Each equation of the form (\ref{gl18}) has exactly one solution (see proof of Theorem \ref{theorem18}). Altogether the system has $\textup{mldeg} (P)$ solutions for generic data.
\end{proof}

We continue with construction $\mathcal{B}$. Applying $\mathcal{B}$ once to each reflexive polygon, only $\mathcal{B}(P_{6\textup{a}})$ and $\mathcal{B}(P_{6\textup{d}})$ exhibit an ML degree drop (see Table \ref{table4}). The polygons $P_{6\textup{a}}$ and $P_{6\textup{d}}$ themselves have no ML degree drop. Since the ML degrees of $\mathcal{B}(P_{6\textup{a}})$ and $\mathcal{B}(P_{6\textup{d}})$ are different while $\textup{mldeg}(P_{6 \textup{a}}) = \textup{mldeg}(P_{6 \textup{b}})$, we do not expect that it is possible to give a universal formula for $\textup{mldeg} (\mathcal{B}^k(\cdot))$. Therefore, studying the ML degree of construction $\mathcal{B}$ requires a case-by-case study. We use the following general observation.

Given a polytope $P$ of dimension $d$ and $k\in \NN$, let $f_{d+k} \in \mathbb{Z} [\theta_1^{\pm 1}, \ldots, \theta_{d+k}^{\pm 1}]$ be the Laurent polynomial defined by the design matrix of $\mathcal{B}^k(P)$.

\begin{lemma}
Let $P$ be a lattice polytope of dimension $d$ and let $\theta = (\theta_1, \ldots, \theta_d)$ be a common root of the partial derivatives of $f_d$. Then $\mathcal{B}^k(P)$ exhibits an ML degree drop if $(\theta_{d+1}, \ldots, \theta_{d+k}) \in \{\pm 1\}^k$ exists such that
\begin{equation*}
\sum_{i=d+1}^{d+k} \theta_i = \frac{-f_d (\theta)}{2}.
\end{equation*}
\end{lemma}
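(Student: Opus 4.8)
The plan is to produce a single singular point of the Laurent polynomial $f_{d+k}$ in the torus $(\mathbb{C}^*)^{d+k}$ at the standard scaling; by Theorem~\ref{thm:Adet} this forces $E_A(1,\dots,1)=0$ and hence an ML degree drop. First I would pin down the explicit shape of $f_{d+k}$. Since $\mathcal{B}(P)$ is a bipyramid over $P$, the lattice point count above shows that its only lattice points are those of $P$ placed at height $0$ together with the two apexes $(0,\dots,0,\pm 1)$. The columns of the design matrix of $\mathcal{B}(P)$ are therefore the lattice points of $P$ augmented by a trailing $0$, plus the two columns $(0,\dots,0,\pm 1)^{T}$, so the associated (standard-scaling) Laurent polynomial is $f_{d}(\theta_1,\dots,\theta_d)+\theta_{d+1}+\theta_{d+1}^{-1}$. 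Iterating the construction $k$ times gives
\begin{equation*}
f_{d+k} = f_d(\theta_1,\dots,\theta_d) + \sum_{i=d+1}^{d+k}\left(\theta_i + \theta_i^{-1}\right).
\end{equation*}

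Next I would impose the singularity conditions $f_{d+k}=\partial f_{d+k}/\partial\theta_j=0$ for all $j\in[d+k]$. For $j>d$ the partial derivative is $1-\theta_j^{-2}$, which vanishes exactly when $\theta_j=\pm 1$. For $j\le d$ the partial derivative equals $\partial f_d/\partial\theta_j$ evaluated at $(\theta_1,\dots,\theta_d)$, and this vanishes by the hypothesis that $(\theta_1,\dots,\theta_d)\in(\mathbb{C}^*)^d$ is a common root of the partials of $f_d$. It remains to satisfy $f_{d+k}=0$ itself. Choosing $\theta_{d+1},\dots,\theta_{d+k}\in\{\pm 1\}$ as in the hypothesis and using the elementary identity $\theta_i+\theta_i^{-1}=2\theta_i$ valid whenever $\theta_i=\pm 1$, the equation $f_{d+k}=0$ becomes $f_d(\theta)+2\sum_{i=d+1}^{d+k}\theta_i=0$, which is exactly the assumed relation $\sum_{i=d+1}^{d+k}\theta_i=-f_d(\theta)/2$. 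The resulting point lies in $(\mathbb{C}^*)^{d+k}$, its last $k$ coordinates being $\pm 1\neq 0$, and is a genuine singularity of $f_{d+k}$.

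Finally I would translate this into the discriminant language of the paper. The constructed point witnesses membership $(1,\dots,1)\in\nabla_A$ for $A$ the design matrix of $\mathcal{B}^k(P)$, so the top-dimensional factor $\Delta_A$ in $E_A(c)=\prod_{\Gamma}\Delta_{\Gamma\cap A}(c)$ vanishes at the standard scaling; hence $E_A(1,\dots,1)=0$ and Theorem~\ref{thm:Adet} delivers the ML degree drop. I expect the only genuine care to be needed in the first step, namely the justification of the closed form of $f_{d+k}$ and the claim that $\mathcal{B}$ introduces no lattice points beyond the two apexes, together with the mild point that the full-dimensional factor $\Delta_A$ is the relevant vanishing factor of the principal $A$-determinant; the remaining work is the one-line observation $\theta_i+\theta_i^{-1}=2\theta_i$ at $\theta_i=\pm 1$, exactly as in the proofs of Proposition~\ref{proposition10} and Theorem~\ref{theorem24}.
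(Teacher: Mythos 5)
Your proposal is correct and follows essentially the same route as the paper's proof: both use the closed form $f_{d+k} = f_d + \sum_{i=d+1}^{d+k}\left(\theta_i + \theta_i^{-1}\right)$ coming from the bipyramid construction, observe that the new partials $1-\theta_i^{-2}$ force $\theta_i = \pm 1$ while the old partials vanish by hypothesis, and reduce $f_{d+k}=0$ at such a point to the stated condition $\sum_{i=d+1}^{d+k}\theta_i = -f_d(\theta)/2$. Your explicit final step through $\nabla_A$, $E_A$, and Theorem~\ref{thm:Adet} merely spells out what the paper leaves implicit, so there is no substantive difference.
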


\begin{proof}
By construction $\mathcal{B}$,
\begin{equation*}
f_{d+k} = f_d + \sum_{i=d+1}^{d+k} \left ( \theta_i + \frac{1}{\theta_i} \right ).
\end{equation*}
Therefore, $\partial_{\theta_i} f_{d+k} = \partial_{\theta_i} f_d$ for all $i \in [d]$. Furthermore,
\begin{equation*}
\frac{\partial f_{d+k}}{\partial \theta_i} (\theta_1, \ldots, \theta_{d+k}) = 1 - \frac{1}{\theta_i^2} = 0 \quad \Leftrightarrow \quad \theta_i = \pm 1 \quad \textup{for all } i \in [d+1:d+k].
\end{equation*}
Now let $\theta = (\theta_1,\ldots, \theta_d)$ be a common root of $\partial_{\theta_i} f_d$, $i \in [d]$. Then all partial derivatives of $f_{d+k}$ have a common root. To check whether this is a root of $f_{d+k}$, we consider 
\begin{equation*}
f_{d+k} = f_d + 2 \sum_{i=d+1}^{d+k} \theta_i
\end{equation*}
for $\theta_i = \pm 1$. Thus, given that $\theta_i = \pm 1$ for all $i \in [d+1:d+k]$,
\begin{equation*}
f_{d+k}(\theta_1, \ldots, \theta_{d+k}) = 0 \quad \Leftrightarrow \quad \sum_{i=d+1}^{d+k} \theta_i = \frac{- f_d (\theta)}{2}. \qedhere
\end{equation*}
\end{proof}

In particular, the following theorem holds for the (multiple) application of construction $\mathcal{B}$ to a reflexive polygon. Let $\# \textup{sing}(f_{d+k})$ denote the number of singularities of $f_{d+k}$.

\begin{theorem}
Let $P$ be a reflexive polygon. For $k \in \NN$,
\begin{equation*}
\textup{mldrop} (\mathcal{B}^{k}(P)) = \# \textup{sing}(f_{2+k}).
\end{equation*}
\end{theorem}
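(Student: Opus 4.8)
The plan is to reduce the computation of $\textup{mldrop}(\mathcal{B}^k(P))$ to a single stratum. Recall that the factorization $E_A(c)=\prod_{\Gamma}\Delta_{\Gamma\cap A}(c)$ underlying Theorem \ref{thm:Adet} distributes the ML degree drop across the faces: for the standard scaling, the drop equals the total number of singular points (counted with multiplicity) of the Laurent polynomials $f_\Gamma$ as $\Gamma$ ranges over all faces, where a singular point is a common zero in $(\mathbb{C}^*)^{\dim\Gamma}$ of $f_\Gamma$ and all its partial derivatives. This is precisely how the drops were read off in Proposition \ref{proposition10} and Theorem \ref{theorem24}. Consequently it suffices to show that for a reflexive polygon $P$, the only face of $\mathcal{B}^k(P)$ contributing a singularity is the whole polytope itself, whose polynomial is $f_{2+k}$; the claimed equality then follows immediately.

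First I would classify the proper faces of $\mathcal{B}^k(P)\subseteq\mathbb{R}^{2+k}$ by whether they contain one of the apexes $\pm e_j$, $j\in[3:2+k]$, introduced by the iterated bipyramid construction. Every lattice point of $\mathcal{B}^k(P)$ has $j$-th coordinate in $\{-1,0,1\}$, with the value $+1$ (resp. $-1$) attained only at $+e_j$ (resp. $-e_j$), and no proper face can contain both $+e_j$ and $-e_j$ since the segment joining them passes through the interior point $0$. Hence any proper face $\Gamma$ containing $+e_j$ has all its remaining lattice points at $x_j=0$, so $f_\Gamma=g+\theta_j$ with $g$ independent of $\theta_j$; then $\partial f_\Gamma/\partial\theta_j\equiv 1$ and the singular system has no solution. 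The case of $-e_j$ is identical, giving $\partial f_\Gamma/\partial\theta_j=-\theta_j^{-2}\neq 0$. This is the same mechanism that forces the facet discriminants to be nonzero in the proof of Theorem \ref{theorem24}, and it shows that no apex-containing proper face contributes a singularity.

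It then remains to treat the apex-free proper faces. These lie in $\bigcap_{j=3}^{2+k}\{x_j=0\}$, whose intersection with $\mathcal{B}^k(P)$ is $P\times\{0\}^k$, and the faces of $\mathcal{B}^k(P)$ contained in this slice are exactly the proper faces of $P$, namely its vertices and edges (the equatorial polytope $P$ is never itself a face of a bipyramid). Vertices have discriminant $1$, and for each edge the standard scaling yields a univariate polynomial $\sum_j\theta^j$ with distinct roots, hence no singularity, exactly as in Proposition \ref{prop:basecase}. Therefore no proper face of $\mathcal{B}^k(P)$ contributes, the entire drop is carried by the top stratum, and $\textup{mldrop}(\mathcal{B}^k(P))=\#\textup{sing}(f_{2+k})$, with the singularities enumerated by the preceding lemma. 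The main obstacle is the quantitative step invoked in the first paragraph: upgrading the qualitative criterion of Theorem \ref{thm:Adet} (a drop occurs iff $E_A=0$) to an exact equality between the drop and the number of boundary singular points requires the refined theory of the principal $A$-determinant, together with the verification that the singular points of $f_{2+k}$ are nondegenerate so that they are counted with multiplicity one.
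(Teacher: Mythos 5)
Your proof is correct and follows essentially the same route as the paper: you show that every proper face of $\mathcal{B}^{k}(P)$ either contains an apex $\pm e_j$ (whence $\partial f_\Gamma/\partial\theta_j$ is identically $1$ or $-\theta_j^{-2}$ and the singular system is inconsistent) or is a vertex or edge of $P$ (contributing no singularity), so the entire drop is carried by $f_{2+k}$, exactly as in the paper's argument. If anything, your write-up is slightly more explicit than the paper's, which cites \cite[Example 4.4]{MaximumLikelihoodEstimationOfToricFanoVarieties} and Proposition \ref{proposition10} for the edges rather than giving the roots-of-unity argument, and which, like you, relies on (but does not spell out) the quantitative principle that the drop equals the number of nondegenerate singular points counted by the principal $A$-determinant.
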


\begin{proof}
It is sufficient to show that no proper face contributes a singularity to an ML degree drop. Let $P$ be a reflexive polygon. By construction, $\mathcal{B}^{k}(P)$ has the edges of $P$ as one-dimensional faces. According to the results of \cite[Example 4.4]{MaximumLikelihoodEstimationOfToricFanoVarieties} and the proof of Proposition \ref{proposition10}, these do not contribute any singularity. All other faces contain either $e_i$ or $-e_i$ as a vertex for at least one $i \in \{ 3, \ldots, 2+k\}$. Let $\Gamma$ be a face containing $e_i$ or $-e_i$. Then
\begin{equation*}
\frac{\partial f_\Gamma}{\partial \theta_i} (\theta) = \begin{cases}
1, & \textup{if } \Gamma \textup{ contains } e_i, \\
- \frac{1}{\theta_i^2}, & \textup{if } \Gamma \textup{ contains } -e_i.
\end{cases}
\end{equation*}
That is, $f_\Gamma$ cannot have a singularity.
\end{proof}

The common roots of $\partial_{\theta_i} f_d$ thus provide important information about the occurrence of an ML degree drop regarding $\mathcal{B}^{k}(P)$. Based on this observation, we state a series of results regarding the 16 reflexive polygons. All common roots of $\partial_{\theta_1} f_2$ and $\partial_{\theta_2} f_2$ were computed using \texttt{Mathematica} and can be found in the \texttt{MathRepo} webpage. The associated ML degrees are given in Table \ref{table5}.

\begin{theorem}\label{cor:allB}
\begin{itemize}
\item[(i)] Let $k \ge 2$ be even. Then
\begin{equation*}
\textup{mldrop} (\mathcal{B}^{k} (P_3)) = \binom{k}{\frac{k-2}{2}}, \qquad \textup{mldrop} (\mathcal{B}^{k} (P_{5 \textup{a}})) = 2 \cdot \binom{k}{\frac{k}{2}}, \qquad \textup{mldrop} (\mathcal{B}^{k} (P_{8 \textup{a}})) = 4 \cdot \binom{k}{\frac{k}{2}}.
\end{equation*}
\item[(ii)] Let $k \ge 1$ (resp. $k \ge 5$) be odd. Then
\begin{equation*}
\textup{mldrop} (\mathcal{B}^{k} (P_{6 \textup{a}})) = 2 \cdot \binom{k}{\frac{k-1}{2}}, \qquad \textup{mldrop} (\mathcal{B}^{k} (P_{6 \textup{d}})) = \binom{k}{\frac{k-1}{2}}, \qquad \textup{mldrop} (\mathcal{B}^{k} (P_9)) = \binom{k}{\frac{k-5}{2}}.
\end{equation*}
\item[(iii)] For $k\geq 1$, $\textup{mldrop}(\mathcal{B}^k(Q))=0$ for $Q \in 
\{P_{4\textup{a}}, P_{4\textup{b}}, P_{4\textup{c}}, P_{5\textup{b}}, P_{6\textup{b}},  P_{6\textup{c}},  P_{7\textup{a}},  P_{7\textup{b}}, P_{8\textup{b}}, P_{8\textup{c}} \}.$
\end{itemize}
\end{theorem}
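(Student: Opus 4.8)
The plan is to push everything through the two results stated just above. The reduction theorem already identifies $\textup{mldrop}(\mathcal{B}^{k}(P))$ with the number $\#\textup{sing}(f_{2+k})$ of singular points of the Laurent polynomial attached to $\mathcal{B}^{k}(P)$, so the whole statement becomes a counting problem. By the preceding lemma, each singular point of $f_{2+k}$ splits into a pair: a common root $\theta=(\theta_1,\theta_2)\in(\mathbb{C}^*)^2$ of the two partials $\partial_{\theta_1}f_2$ and $\partial_{\theta_2}f_2$, together with a sign vector $(\theta_3,\dots,\theta_{2+k})\in\{\pm1\}^{k}$ solving the single linear equation $\sum_{i=3}^{2+k}\theta_i=-f_2(\theta)/2$. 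Writing $v=f_2(\theta)$, a sign vector with exactly $a$ entries equal to $+1$ satisfies $2a-k=-v/2$, so for a fixed root the number of admissible sign vectors is $\binom{k}{(k-v/2)/2}$ whenever $(k-v/2)/2$ is an integer in $\{0,\dots,k\}$, and is $0$ otherwise. In particular only those common roots $\theta$ for which $v=f_2(\theta)$ is a real even integer of the parity forced by $k$ can contribute, and the total count is the sum of these binomials over the contributing roots.

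This reduces the proof to a finite, polygon-by-polygon computation: for each polygon I would solve the critical system $\partial_{\theta_1}f_2=\partial_{\theta_2}f_2=0$ in $(\mathbb{C}^*)^2$, list every common root, and record the value $v=f_2(\theta)$ there, discarding all roots whose value is not a real even integer. Two kinds of roots arise. The roots at which $f_2$ itself vanishes are exactly the singularities of the polygon, i.e.\ the points accounting for $\textup{mldrop}(P)$; they have $v=0$, hence $-v/2=0$, and each contributes $\binom{k}{k/2}$, nonzero only for even $k$. This explains the multiplicities $2$ and $4$ for $P_{5\textup{a}}$ and $P_{8\textup{a}}$, whose own ML degree drops are $2$ and $4$: the number of such roots is precisely $\textup{mldrop}(P)$. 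The remaining contributions come from common roots at which $f_2\neq 0$ but $v$ is still a real even integer; these give the shifted binomials $\binom{k}{(k-2)/2}$ for $P_3$ (a single root with $v=4$), the $v=2$ roots producing $\binom{k}{(k-1)/2}$-terms for $P_{6\textup{a}}$ and $P_{6\textup{d}}$, and the $v=10$ root producing $\binom{k}{(k-5)/2}$ for $P_9$, with the leading constants counting the contributing roots and the parity of $k$ dictated by $v/2 \bmod 2$.

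For part (iii) the strategy is complementary: for each of the ten listed polygons I would show that \emph{no} common root of $\partial_{\theta_1}f_2=\partial_{\theta_2}f_2=0$ has $f_2(\theta)$ equal to a real even integer, so the linear constraint is unsatisfiable over $\{\pm1\}^{k}$ for every $k$ and $\#\textup{sing}(f_{2+k})=0$ throughout. Concretely one checks that every such root is either non-real, or has an odd, irrational, or otherwise inadmissible value of $f_2$. The structural input making this plausible is that these polygons exhibit no ML degree drop themselves, so there is no $v=0$ root present; only accidental integer critical values must be excluded.

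The main obstacle is the explicit solution and classification of the bivariate critical systems for the non-product polygons. For $P_{8\textup{a}}=C_2$ the polynomial $f_2$ factors as a product in $\theta_1$ and $\theta_2$, so both the roots and the values $f_2(\theta)$ are immediate; but for the pentagon $P_{5\textup{a}}$ and the hexagons and for $P_9$ the system $\partial_{\theta_1}f_2=\partial_{\theta_2}f_2=0$ is a genuine polynomial system of positive degree, and one must locate all roots, decide realness, and evaluate $f_2$ exactly at each. This is the step for which the computer-algebra computation referenced in the surrounding text is needed; once the finite list of admissible critical values $v$ is in hand, the binomial bookkeeping of the first paragraph closes the proof mechanically.
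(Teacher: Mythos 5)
Your proposal is correct and takes essentially the same route as the paper: the paper's proof likewise uses the two preceding results to reduce $\textup{mldrop}(\mathcal{B}^{2+k}(P))$ to counting sign vectors $(\theta_3,\dots,\theta_{2+k})\in\{\pm1\}^k$ satisfying $\sum_{i}\theta_i = -f_2(\theta)/2$ over the finitely many common roots of $\partial_{\theta_1}f_2=\partial_{\theta_2}f_2=0$ (computed by computer algebra, with one, two, four roots etc.\ at the relevant values), which is exactly your binomial bookkeeping, and it proves (iii) by verifying that $-f_2(\theta)/2\notin\mathbb{Z}$ at every common root of the remaining ten polygons. The only discrepancies are sign conventions in some of the claimed critical values (e.g.\ the two contributing roots for $P_{6\textup{a}}$ actually have $f_2=-2$ rather than $+2$), which are harmless since $\binom{k}{(k+s)/2}=\binom{k}{(k-s)/2}$ leaves your counts unchanged.
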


\begin{proof}
\begin{itemize}
\item[(i)] According to Lemma \ref{cor:allB}, it is sufficient to consider 
\begin{equation*}
\sum_{i=3}^{2+k} \theta_i = \frac{- f_2(\theta_1,\theta_2)}{2} = -2 \qquad \sum_{i=3}^{2+k} \theta_i = \frac{- f_2(\theta_1,\theta_2)}{2} = 0 \qquad
\sum_{i=3}^{2+k} \theta_i = \frac{- f_2(\theta_1,\theta_2)}{2} = 1
\end{equation*}
with $\theta_i = \pm 1$ for all $i \in [3:2+k]$ for one, two and four common roots, respectively.
\item[(ii)] In this case we consider 
\begin{equation*}
\sum_{i=3}^{2+k} \theta_i = \frac{- f_2(\theta_1,\theta_2)}{2} = 1 \qquad 
\sum_{i=3}^{2+k} \theta_i = \frac{- f_2(\theta_1,\theta_2)}{2} = 0 \qquad
\sum_{i=3}^{2+k} \theta_i = \frac{- f_2(\theta_1,\theta_2)}{2} = -5
\end{equation*}
with $\theta_i = \pm 1$ for all $i \in [3:2+k]$ for two, one and one common roots, respectively. 
\item[(iii)] Similarly, we computed the common roots of $\partial_{\theta_1} f_2$ and $\partial_{\theta_2} f_2$ for all remaining reflexive polygons, 
\begin{equation*}
P_{4\textup{a}}, \quad P_{4\textup{b}}, \quad P_{4\textup{c}}, \quad P_{5\textup{b}}, \quad P_{6\textup{b}}, \quad P_{6\textup{c}}, \quad P_{7\textup{a}}, \quad P_{7\textup{b}}, \quad P_{8\textup{b}}, \quad P_{8\textup{c}}. \tag{$\star$}
\end{equation*}
Since each of these roots $(\theta_1,\theta_2)$ has $- f_2 (\theta_1, \theta_2)/2 \notin \mathbb{Z}$, $\mathcal{B}^k(\star)$ has no ML degree drop. \qedhere
\end{itemize}
\end{proof}

\begin{table}[h!]
\footnotesize
\small
\begin{minipage}{0.32\textwidth}
\centering
\begin{tabular}{c c c} 
Polytope & mldeg(V) & deg(V) \\ [0.2ex] 
\hline \\ [-1ex] 
$P_3$ & 3 & 3 \\ 
$\mathcal{B}^1(P_3)$ & 6 & 6 \\
$\mathcal{B}^2(P_3)$ & 11 & 12 \\
$\mathcal{B}^3(P_3)$ & 24 & 24 \\
$\mathcal{B}^4(P_3)$ & 44 & 48 \\
$\mathcal{B}^5(P_3)$ & 96 & 96 \\
$\mathcal{B}^6(P_3)$ & 177 & 192 \\ [1ex] 

$P_{4\textup{a}}$ & 4 & 4 \\ 
$\mathcal{B}^1(P_{4\textup{a}})$ & 8 & 8 \\
$\mathcal{B}^2(P_{4\textup{a}})$ & 16 & 16 \\
$\mathcal{B}^3(P_{4\textup{a}})$ & 32 & 32 \\ 
$\mathcal{B}^4(P_{4\textup{a}})$ & 64 & 64 \\
$\mathcal{B}^5(P_{4\textup{a}})$ & 128 & 128 \\
$\mathcal{B}^6(P_{4\textup{a}})$ & 256 & 256 \\ [1ex] 

$P_{4\textup{b}}$ & 4 & 4 \\ 
$\mathcal{B}^1(P_{4\textup{b}})$ & 8 & 8 \\
$\mathcal{B}^2(P_{4\textup{b}})$ & 16 & 16 \\
$\mathcal{B}^3(P_{4\textup{b}})$ & 32 & 32 \\
$\mathcal{B}^4(P_{4\textup{b}})$ & 64 & 64 \\
$\mathcal{B}^5(P_{4\textup{b}})$ & 128 & 128 \\
$\mathcal{B}^6(P_{4\textup{b}})$ & 256 & 256 \\ [1ex] 

$P_{4\textup{c}}$ & 4 & 4 \\ 
$\mathcal{B}^1(P_{4\textup{c}})$ & 8 & 8 \\
$\mathcal{B}^2(P_{4\textup{c}})$ & 16 & 16 \\
$\mathcal{B}^3(P_{4\textup{c}})$ & 32 & 32 \\
$\mathcal{B}^4(P_{4\textup{c}})$ & 64 & 64 \\
$\mathcal{B}^5(P_{4\textup{c}})$ & 128 & 128 \\
$\mathcal{B}^6(P_{4\textup{c}})$ & 256 & 256 \\ [1ex] 

$P_{5\textup{a}}$ & 3 & 5 \\ 
$\mathcal{B}^1(P_{5\textup{a}})$ & 10 & 10 \\
$\mathcal{B}^2(P_{5\textup{a}})$ & 16 & 20 \\
$\mathcal{B}^3(P_{5\textup{a}})$ & 40 & 40 \\ 
$\mathcal{B}^4(P_{5\textup{a}})$ & 68 & 80 \\
$\mathcal{B}^5(P_{5\textup{a}})$ & 160 & 160 \\
$\mathcal{B}^6(P_{5\textup{a}})$ & 280 & 320 \\ [1ex] 

$P_{5\textup{b}}$ & 5 & 5 \\ 
$\mathcal{B}^1(P_{5\textup{b}})$ & 10 & 10 \\
$\mathcal{B}^2(P_{5\textup{b}})$ & 20 & 20 \\
$\mathcal{B}^3(P_{5\textup{b}})$ & 40 & 40 \\
$\mathcal{B}^4(P_{5\textup{b}})$ & 80 & 80 \\
$\mathcal{B}^5(P_{5\textup{b}})$ & 160 & 160 \\
$\mathcal{B}^6(P_{5\textup{b}})$ & 320 & 320 \\
\end{tabular}
\end{minipage}
\vline
\begin{minipage}{0.32\textwidth}
\centering
\begin{tabular}{c c c} 
Polytope & mldeg(V) & deg(V) \\ [0.2ex] 
\hline \\ [-1ex] 
$P_{6\textup{a}}$ & 6 & 6 \\ 
$\mathcal{B}^1(P_{6\textup{a}})$ & 10 & 12 \\
$\mathcal{B}^2(P_{6\textup{a}})$ & 24 & 24 \\
$\mathcal{B}^3(P_{6\textup{a}})$ & 42 & 48 \\
$\mathcal{B}^4(P_{6\textup{a}})$ & 96 & 96 \\
$\mathcal{B}^5(P_{6\textup{a}})$ & 172 & 192 \\
$\mathcal{B}^6(P_{6\textup{a}})$ & 384 & 384 \\ [1ex] 

$P_{6\textup{b}}$ & 6 & 6 \\ 
$\mathcal{B}^1(P_{6\textup{b}})$ & 12 & 12 \\
$\mathcal{B}^2(P_{6\textup{b}})$ & 24 & 24 \\
$\mathcal{B}^3(P_{6\textup{b}})$ & 48 & 48 \\
$\mathcal{B}^4(P_{6\textup{b}})$ & 96 & 96 \\
$\mathcal{B}^5(P_{6\textup{b}})$ & 192 & 192 \\
$\mathcal{B}^6(P_{6\textup{b}})$ & 384 & 384 \\ [1ex] 

$P_{6\textup{c}}$ & 6 & 6 \\ 
$\mathcal{B}^1(P_{6\textup{c}})$ & 12 & 12 \\
$\mathcal{B}^2(P_{6\textup{c}})$ & 24 & 24 \\
$\mathcal{B}^3(P_{6\textup{c}})$ & 48 & 48 \\
$\mathcal{B}^4(P_{6\textup{c}})$ & 96 & 96 \\
$\mathcal{B}^5(P_{6\textup{c}})$ & 192 & 192 \\
$\mathcal{B}^6(P_{6\textup{c}})$ & 384 & 384 \\ [1ex] 

$P_{6\textup{d}}$ & 6 & 6 \\ 
$\mathcal{B}^1(P_{6\textup{d}})$ & 11 & 12 \\
$\mathcal{B}^2(P_{6\textup{d}})$ & 24 & 24 \\
$\mathcal{B}^3(P_{6\textup{d}})$ & 45 & 48 \\
$\mathcal{B}^4(P_{6\textup{d}})$ & 96 & 96 \\
$\mathcal{B}^5(P_{6\textup{d}})$ & 182 & 192 \\
$\mathcal{B}^6(P_{6\textup{d}})$ & 384 & 384 \\ [1ex] 

$P_{7\textup{a}}$ & 7 & 7 \\ 
$\mathcal{B}^1(P_{7\textup{a}})$ & 14 & 14 \\
$\mathcal{B}^2(P_{7\textup{a}})$ & 28 & 28 \\
$\mathcal{B}^3(P_{7\textup{a}})$ & 56 & 56 \\
$\mathcal{B}^4(P_{7\textup{a}})$ & 112 & 112 \\
$\mathcal{B}^5(P_{7\textup{a}})$ & 224 & 224 \\
$\mathcal{B}^6(P_{7\textup{a}})$ & 448 & 448 \\ [1ex] 

 \\ 
 \\
 \\
 \\
 \\
 \\
 \\
\end{tabular}
\end{minipage}
\vline
\begin{minipage}{0.32\textwidth}
\centering
\begin{tabular}{c c c} 
Polytope & mldeg(V) & deg(V) \\ [0.2ex] 
\hline \\ [-1ex] 
$P_{7\textup{b}}$ & 7 & 7 \\ 
$\mathcal{B}^1(P_{7\textup{b}})$ & 14 & 14 \\
$\mathcal{B}^2(P_{7\textup{b}})$ & 28 & 28 \\
$\mathcal{B}^3(P_{7\textup{b}})$ & 56 & 56 \\
$\mathcal{B}^4(P_{7\textup{b}})$ & 112 & 112 \\
$\mathcal{B}^5(P_{7\textup{b}})$ & 224 & 224 \\
$\mathcal{B}^6(P_{7\textup{b}})$ & 448 & 448 \\ [1ex] 

$P_{8\textup{a}}$ & 4 & 8 \\ 
$\mathcal{B}^1(P_{8\textup{a}})$ & 16 & 16 \\
$\mathcal{B}^2(P_{8\textup{a}})$ & 24 & 32 \\
$\mathcal{B}^3(P_{8\textup{a}})$ & 64 & 64 \\
$\mathcal{B}^4(P_{8\textup{a}})$ & 104 & 128 \\
$\mathcal{B}^5(P_{8\textup{a}})$ & 256 & 256 \\
$\mathcal{B}^6(P_{8\textup{a}})$ & 432 & 512 \\ [1ex] 

$P_{8\textup{b}}$ & 8 & 8 \\ 
$\mathcal{B}^1(P_{8\textup{b}})$ & 16 & 16 \\
$\mathcal{B}^2(P_{8\textup{b}})$ & 32 & 32 \\
$\mathcal{B}^3(P_{8\textup{b}})$ & 64 & 64 \\
$\mathcal{B}^4(P_{8\textup{b}})$ & 128 & 128 \\
$\mathcal{B}^5(P_{8\textup{b}})$ & 256 & 256 \\
$\mathcal{B}^6(P_{8\textup{b}})$ & 512 & 512 \\ [1ex] 

$P_{8\textup{c}}$ & 8 & 8 \\ 
$\mathcal{B}^1(P_{8\textup{c}})$ & 16 & 16 \\
$\mathcal{B}^2(P_{8\textup{c}})$ & 32 & 32 \\
$\mathcal{B}^3(P_{8\textup{c}})$ & 64 & 64 \\
$\mathcal{B}^4(P_{8\textup{c}})$ & 128 & 128 \\
$\mathcal{B}^5(P_{8\textup{c}})$ & 256 & 256 \\
$\mathcal{B}^6(P_{8\textup{c}})$ & 512 & 512 \\ [1ex] 

$P_9$ & 9 & 9 \\ 
$\mathcal{B}^1(P_9)$ & 18 & 18 \\
$\mathcal{B}^2(P_9)$ & 36 & 36 \\
$\mathcal{B}^3(P_9)$ & 72 & 72 \\
$\mathcal{B}^4(P_9)$ & 144 & 144 \\
$\mathcal{B}^5(P_9)$ & 287 & 288 \\
$\mathcal{B}^6(P_9)$ & 576 & 576 \\ [1ex] 

 \\ 
 \\
 \\
 \\
 \\
 \\
 \\
\end{tabular}
\end{minipage}
\caption{ML degrees and degrees of $\mathcal{B}^k (P)$ for all $1 \le k \le 6$ and all reflexive polygons $P$. The ML degrees were computed using \texttt{HomotopyContinuation.jl} while the degrees were determined using Proposition \ref{proposition26}.} \label{table5}
\end{table}

\section{Reflexive Polytopes Arising from Graphs} \label{section8}

In addition to the previous constructions, reflexive polytopes can also be derived from simple graphs. A well-known example are symmetric edge polytopes introduced by \citet{RootsOfEhrhartPolynomialsArisingFromGraphs}. Let $G$ be a simple undirected graph on the vertex set $[d]$ with edge set $E(G)$. The symmetric edge polytope of $G$, denoted by $\mathscr{A}_G \subseteq \mathbb{R}^d$, is the convex hull of
\begin{equation*}
A(G) \coloneqq \{ 0 \} \cup \{ \pm (e_i - e_j) \mid \{i, j\} \in E(G) \}.
\end{equation*}
See also \citep{SmoothFanoPolytopesArisingFromFiniteDirectedGraphs, InterlacingEhrhartPolynomialsOfReflexivePolytopes, ArithmeticAspectsOfSymmetricEdgePolytopes} for more information on combinatorial properties of $\mathscr{A}_G$.

\begin{example}[Permutahedron of Order 3] \upshape
Consider the permutahedron of order $3$:
\begin{equation*}
\mathcal{P}_3 \coloneqq \textup{conv} \{ \sigma = (\sigma(1), \sigma(2), \sigma(3)) \mid \sigma \in S_3 \}.
\end{equation*}
Here each permutation $\sigma$ in the symmetric group $S_3$ written in tuple notation is interpreted as a vector in $\mathbb{R}^3$. It is a translation of the symmetric edge polytope $\mathscr{A}_G$ arising from
\begin{figure}[h!]
\centering
\begin{tikzpicture}
\coordinate[label=left:$1$] (A) at (0,0);
\coordinate[label=right:$2$] (B) at (2,0);
\coordinate[label=above:$3$] (C) at (1,1.5);
\coordinate[label=left:$G\; {=}$] (D) at (-0.75,1);
\draw (A) -- (B);
\draw (A) -- (C);
\draw (B) -- (C);
\fill (A) circle (2pt);
\fill (B) circle (2pt);
\fill (C) circle (2pt);
\end{tikzpicture}.
\end{figure}

\noindent Both descriptions yield a two-dimensional polytope embedded in a  three-dimensional space. According to \citet{AZonotopeAssociatedWithGraphicalDegreeSequences}, $\textup{vol}(\mathcal{P}_d) = d^{d-2}$. Since $\textup{vol}(\mathcal{P}_3) = 3$ and the $f$-vector is $(6,6)$, $\mathscr{A}_G$ is isomorphic to the reflexive polygon of type 6a, which has no ML degree drop.
\end{example}

A well-known complete bipartite graph is the star
\begin{figure}[H]
\centering
\begin{tikzpicture}
\coordinate[label=left:$1$] (A) at (0,0);
\coordinate[label=right:$2$] (B) at (0,1.5);
\coordinate[label=above:$3$] (C) at (1,1);
\coordinate[label=right:$4$] (D) at (1.5,0);
\coordinate[label=right:$5$] (E) at (1,-1);
\coordinate[label=below:$6$] (F) at (0,-1.5);
\coordinate[label=below:$7$] (G) at (-1,-1);
\coordinate[label=above:$d$] (H) at (-1,1);
\coordinate[label=left:$\cdot$] (I) at (-1.2,0);
\coordinate[label=left:$\cdot$] (J) at (-1.18,0.2);
\coordinate[label=left:$\cdot$] (K) at (-1.18,-0.2);
\coordinate[label=left:$K_{1,d-1}\; {=}$] (L) at (-2,0);
\draw (A) -- (B);
\draw (A) -- (C);
\draw (A) -- (D);
\draw (A) -- (E);
\draw (A) -- (F);
\draw (A) -- (G);
\draw (A) -- (H);
\fill (A) circle (2pt);
\fill (B) circle (2pt);
\fill (C) circle (2pt);
\fill (D) circle (2pt);
\fill (E) circle (2pt);
\fill (F) circle (2pt);
\fill (G) circle (2pt);
\fill (H) circle (2pt);
\end{tikzpicture}.
\end{figure}

\begin{proposition}
The log-linear models defined by $\mathscr{A}_{K_{1,d-1}}$ and $\cross_{d-1}$ are isomorphic.
\end{proposition}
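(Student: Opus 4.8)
The plan is to exhibit an explicit unimodular equivalence between the two lattice-point configurations and then observe that such an equivalence induces an isomorphism of the associated log-linear models. First I would record both design matrices. Labeling the center of the star as vertex $1$, the configuration $A(K_{1,d-1})$ consists of the origin together with $e_1 - e_j$ and $e_j - e_1$ for $j = 2, \ldots, d$, giving $2d-1$ lattice points in $\mathbb{R}^d$; the configuration defining $\cross_{d-1}$ consists of the origin together with $\pm e_i$ for $i = 1, \ldots, d-1$, also $2d-1$ points, in $\mathbb{R}^{d-1}$. The crucial structural observation is that every point of $A(K_{1,d-1})$ satisfies $x_1 + \cdots + x_d = 0$, so $\mathscr{A}_{K_{1,d-1}}$ is genuinely $(d-1)$-dimensional and lives in the sublattice $L = \{x \in \mathbb{Z}^d \mid \sum_i x_i = 0\}$.

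Next I would introduce the coordinate projection $\pi : \mathbb{R}^d \to \mathbb{R}^{d-1}$ that forgets the first coordinate. Restricted to the hyperplane $\sum_i x_i = 0$ this map is a bijection with integral inverse $(y_1, \ldots, y_{d-1}) \mapsto (-(y_1 + \cdots + y_{d-1}), y_1, \ldots, y_{d-1})$, so $\pi|_L : L \to \mathbb{Z}^{d-1}$ is a lattice isomorphism. A direct check shows $\pi$ sends $0 \mapsto 0$, $e_1 - e_j \mapsto -e_{j-1}$ and $e_j - e_1 \mapsto e_{j-1}$, hence it carries $A(K_{1,d-1})$ bijectively onto $\{0\} \cup \{\pm e_i \mid i \in [d-1]\}$, the configuration of $\cross_{d-1}$. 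Thus the two point configurations are unimodularly equivalent.

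Finally I would translate this into a statement about models. Writing $A$ and $B$ for the two design matrices, the relation $\sum_i x_i = 0$ means that the rows of $A$ indexed $1, \ldots, d$ sum to the zero row, so the first coordinate row is redundant and $\mathrm{rowspan}(A') = \mathrm{span}(\mathbf{1}, \text{ rows } 2, \ldots, d \text{ of } A)$. Under the column relabeling provided by $\pi$, row $k$ of $A$ (for $k \ge 2$) equals row $k-1$ of $B$, and the all-ones row is common to both. Therefore $\mathrm{rowspan}(A') = \mathrm{rowspan}(B')$ after this permutation of columns, and since a log-linear model depends only on $\mathrm{rowspan}(A')$ (Definition \ref{def:loglinear}), the models $\mathcal{M}_{A(K_{1,d-1})}$ and $\mathcal{M}_{\cross_{d-1}}$ coincide up to relabeling of the state space, i.e.\ they are isomorphic.

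The step I would be most careful about is verifying that $\pi$ is genuinely unimodular (and not merely a linear bijection) and that adjoining the all-ones row does not spoil the matching of row spans; both reduce to the elementary observation that the hyperplane $\sum_i x_i = 0$ projects isomorphically onto $\mathbb{Z}^{d-1}$, and that this same hyperplane relation is precisely what renders the first coordinate row redundant.
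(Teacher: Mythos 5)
Your proposal is correct and follows essentially the same route as the paper's proof: the paper likewise uses the relation $\sum_i x_i = 0$ on the columns to conclude that the first row of the design matrix is a linear combination of the others, and that deleting it (your projection $\pi$) yields exactly the design matrix of $\cross_{d-1}$, whence the models agree since they depend only on $\mathrm{rowspan}(A')$. Your explicit verification that $\pi$ restricts to a lattice isomorphism on the hyperplane lattice, with integral inverse, is a detail the paper leaves implicit but adds nothing beyond its argument.
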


\begin{proof}
The symmetric edge polytope $\mathscr{A}_{K_{1,d-1}}$ is a $(d-1)$-dimensional polytope embedded in a $d$-dimensional space. It defines the design matrix
\begin{equation*}
A
 = \begin{bmatrix}
0 & \pm (e_1-e_2) & \pm (e_1 - e_3) & \ldots & \pm (e_1 - e_d)
\end{bmatrix} \in \mathbb{Z}^{d\times 2(d-1)+1}
\end{equation*}
of rank $d-1$. The first row can be represented as a linear combination of the other rows. Removing the first row gives
\begin{equation*}
\begin{bmatrix}
0 & \pm e_1 & \pm e_2 & \ldots & \pm e_{d-1}
\end{bmatrix} \in \mathbb{Z}^{(d-1)\times 2(d-1)+1},
\end{equation*}
which is the design matrix of $\cross_{d-1}$.
\end{proof}

\begin{corollary}
For $d>1$,
\begin{equation*}
\textup{mldeg}(\mathscr{A}_{K_{1,d-1}}) = \textup{deg}(\mathscr{A}_{K_{1,d-1}}) = 2^{d-1}.
\end{equation*}
\end{corollary}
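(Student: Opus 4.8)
The plan is to leverage the immediately preceding proposition, which exhibits the log-linear model of $\mathscr{A}_{K_{1,d-1}}$ as isomorphic to that of the cross polytope $\cross_{d-1}$, and then simply transport the invariants already computed for $\cross_{d-1}$. Since this statement is labelled a corollary, the bulk of the work is citation; the one point that deserves care is why an isomorphism of models forces equality of both the degree and the ML degree.

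First I would make that invariance precise. Recall from the proof of the preceding proposition that the design matrix $A$ of $\mathscr{A}_{K_{1,d-1}}$ has its first row equal to the negative of the sum of its remaining rows, so that deleting this redundant row yields exactly the design matrix of $\cross_{d-1}$. The key observation is that this deletion does not change $\textup{rowspan}(A')$: adjoining the all-ones row and discarding a row that already lies in the span of the others leaves the row space fixed. Consequently the two log-linear models coincide as subsets of $\Delta_{n-1}$ in the sense of Definition \ref{def:loglinear}, and the corresponding parametrizations have the same image once the deleted coordinate is absorbed by the torus reparametrization $\eta_k = \theta_k / \theta_1$. Hence the associated toric varieties are equal, and since both $\textup{deg}(V)$ and $\textup{mldeg}(V)$ are determined by the model (equivalently, by $\textup{rowspan}(A')$), they agree for $\mathscr{A}_{K_{1,d-1}}$ and $\cross_{d-1}$.

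With invariance established, I would invoke the two results already proved for the cross polytope. The preceding proposition gives $\textup{deg}(\cross_{d-1}) = 2^{d-1}$, and Theorem \ref{theorem24} gives $\textup{mldeg}(\cross_{d-1}) = 2^{d-1}$; both apply in dimension $d-1 \ge 1$, which is legitimate since $d > 1$. Combining these values with the invariance argument yields
$$
\textup{mldeg}(\mathscr{A}_{K_{1,d-1}}) = \textup{deg}(\mathscr{A}_{K_{1,d-1}}) = 2^{d-1},
$$
as claimed.

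I expect no genuine obstacle here: the only content beyond citing Theorem \ref{theorem24} and the degree proposition is verifying that the row reduction relating the two design matrices preserves the model and therefore the numerical invariants. That verification is immediate, because the redundancy of the first row is precisely the linear relation recorded in the proof of the preceding proposition, and neither $\textup{deg}$ nor $\textup{mldeg}$ is affected by discarding a linearly dependent row of the design matrix.
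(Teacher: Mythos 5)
Your proposal is correct and follows essentially the same route as the paper: the corollary is an immediate consequence of the preceding proposition identifying the two log-linear models (via deleting the linearly dependent first row of the design matrix, which leaves $\textup{rowspan}(A')$ and hence the model unchanged), combined with the degree formula $\textup{deg}(\cross_{d-1})=2^{d-1}$ from Section \ref{section5} and $\textup{mldeg}(\cross_{d-1})=2^{d-1}$ from Theorem \ref{theorem24}. Your explicit verification that both invariants are preserved under this row reduction (including the torus reparametrization) simply spells out what the paper leaves implicit.
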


Another construction of reflexive polytopes based on graphs was introduced by \citet{ReflexivePolytopesArisingFromBipartiteGraphsWithGammaPositivityAssociatedToInteriorPolynomials}. Let $\mathscr{B}_G \subseteq \mathbb{R}^d$ denote the convex hull of 
\begin{equation*}
B(G) \coloneqq \{ 0, \pm e_1, \ldots, \pm e_d\} \cup \{ \pm e_i \pm e_j \mid \{ i, j \} \in E(G) \}.
\end{equation*}
Then $\mathscr{B}_G$ is reflexive if and only if $G$ is bipartite \citep[Theorem 0.1]{ReflexivePolytopesArisingFromBipartiteGraphsWithGammaPositivityAssociatedToInteriorPolynomials}. For $E(G) = \emptyset$, $\mathscr{B}_G$ is the $d$-dimensional cross polytope. If only one node is added to a bipartite graph $G$ without any further edges, this corresponds to construction $\mathcal{B}$ from Section \ref{section5}.

\begin{example}[2-Cube] \upshape
Consider
\begin{figure}[h!]
\centering
\begin{tikzpicture}
\coordinate[label=left:$1$] (A) at (0,0);
\coordinate[label=right:$2$] (B) at (2,0);
\coordinate[label=left:$G\; {=}$] (D) at (-0.75,0);
\draw (A) -- (B);
\fill (A) circle (2pt);
\fill (B) circle (2pt);
\end{tikzpicture}.
\end{figure}

\noindent Then $\mathscr{B}_G$ corresponds to $P_{8\textup{a}}$. The polytope $\mathcal{B}(P_{8\textup{a}})$ arises from
\begin{figure}[H]
\centering
\begin{tikzpicture}
\coordinate[label=left:$1$] (A) at (0,1.5);
\coordinate[label=right:$2$] (B) at (2,1.5);
\coordinate[label=below:$3$] (C) at (1,0.5);
\coordinate[label=left:$G'\; {=}$] (D) at (-0.75,1);
\draw (A) -- (B);
\fill (A) circle (2pt);
\fill (B) circle (2pt);
\fill (C) circle (2pt);
\end{tikzpicture}.
\end{figure}

\noindent The associated ML degree and the degree of the toric variety of $\mathscr{B}_G$ and $\mathscr{B}_{G'}$ were examined in detail in the previous sections.
\end{example}

Since $K_{1,d-1}$ is bipartite for all $d$, we have that $\mathscr{B}_{K_{1,d-1}}$ is reflexive. We finish by giving the degree and ML degree of the corresponding model.

\begin{proposition}
For $d \ge 2$,
\begin{equation*}
\textup{deg}(\mathscr{B}_{K_{1,d-1}}) = d \cdot 2^d \quad \text{and} \quad  \textup{mldeg}(\mathscr{B}_{K_{1,d-1}}) = 2^d. 
\end{equation*}
\end{proposition}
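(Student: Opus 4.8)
The plan is to recognize $\mathscr{B}_{K_{1,d-1}}$ as the prism $\cross_{d-1}\times[-1,1]=\mathcal{A}(\cross_{d-1})$ and then invoke the formulas already established for the construction $\mathcal{A}$ together with the (ML) degree of the cross polytope. Taking the center of the star to be vertex $1$, the generating set is
\[
B(K_{1,d-1})=\{0,\pm e_1,\ldots,\pm e_d\}\cup\{\pm e_1\pm e_j : 2\le j\le d\}.
\]
First I would separate the first coordinate from the remaining $d-1$ coordinates, writing each point as a pair $(x_1,y)$ with $y\in\mathbb{R}^{d-1}$. Under this splitting the points $\pm e_1\pm e_j$ become $(\pm 1,\pm e'_j)$, where the $e'_j$ are the standard basis vectors of $\mathbb{R}^{d-1}$; these are exactly the vertices of $\cross_{d-1}\times[-1,1]$. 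The remaining generators $0$, $\pm e_1$ and $\pm e_j$ for $j\ge 2$ are either the center or midpoints of these vertices (for instance $e_j=\tfrac12[(e_1+e_j)+(-e_1+e_j)]$ and $e_1=\tfrac12[(e_1+e_j)+(e_1-e_j)]$), so they lie in the prism and are not vertices. Hence $\mathrm{conv}\,B(K_{1,d-1})=\cross_{d-1}\times[-1,1]$ as polytopes.

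Second, I would check that $B(K_{1,d-1})$ is precisely the full set of lattice points of the prism, so that the two design matrices agree up to the coordinate permutation sending coordinate $1$ to the interval direction. The lattice points of $\cross_{d-1}\times[-1,1]$ are the pairs $(t,v)$ with $t\in\{-1,0,1\}$ and $v$ a lattice point of $\cross_{d-1}$, i.e.\ $v\in\{0,\pm e'_1,\ldots,\pm e'_{d-1}\}$; there are $3(2d-1)$ of them, and a direct match against the slices at heights $0$ and $\pm1$ shows these are exactly the images of $B(K_{1,d-1})$. Since a coordinate permutation is unimodular, the log-linear models of $\mathscr{B}_{K_{1,d-1}}$ and $\mathcal{A}(\cross_{d-1})$ are isomorphic, and therefore their degrees and ML degrees coincide.

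Finally, the two numbers follow from the earlier results applied with $k=1$ and $P=\cross_{d-1}$, a lattice polytope of dimension $d-1\ge 1$. Proposition \ref{proposition26} gives
\[
\textup{deg}(\mathscr{B}_{K_{1,d-1}})=\textup{deg}(\mathcal{A}(\cross_{d-1}))=2\cdot\frac{d!}{(d-1)!}\cdot\textup{deg}(\cross_{d-1})=2\cdot d\cdot 2^{d-1}=d\cdot 2^d,
\]
using $\textup{deg}(\cross_{d-1})=2^{d-1}$. Likewise Corollary \ref{theorem29}, together with $\textup{mldeg}(\cross_{d-1})=2^{d-1}$ from Theorem \ref{theorem24}, yields
\[
\textup{mldeg}(\mathscr{B}_{K_{1,d-1}})=\textup{mldeg}(\mathcal{A}(\cross_{d-1}))=2\cdot\textup{mldeg}(\cross_{d-1})=2^d.
\]
I expect the only real work to be the second step: verifying that no extra lattice points appear and that the non-vertex generators of $B(K_{1,d-1})$ are accounted for, so that the identification $\mathscr{B}_{K_{1,d-1}}=\mathcal{A}(\cross_{d-1})$ holds at the level of design matrices and not merely as abstract polytopes. (As a sanity check, $d=2$ recovers $\mathscr{B}_{K_{1,1}}=C_2$ with $\textup{deg}=8$ and $\textup{mldeg}=4$, matching Propositions \ref{proposition10} and \ref{proposition12}.)
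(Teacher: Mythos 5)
Your proof is correct and takes essentially the same route as the paper: the paper identifies $\mathscr{B}_{K_{1,d-1}}$ as the Cartesian product $\cross_1 \times \cross_{d-1}$ and applies Corollary \ref{cor:product} together with Theorem \ref{theorem24}, which coincides with your identification $\mathscr{B}_{K_{1,d-1}} = \mathcal{A}(\cross_{d-1}) = \cross_{d-1} \times [-1,1]$ since $\cross_1 = [-1,1]$ and since Proposition \ref{proposition26} and Corollary \ref{theorem29} are themselves the product-volume computation and Corollary \ref{cor:product} in packaged form. Your explicit check that $B(K_{1,d-1})$ consists of exactly the $3(2d-1)$ lattice points of the prism, so the identification holds at the level of design matrices, is a worthwhile detail the paper leaves implicit.
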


\begin{proof}
The polytope $\mathscr{B}_{K_{1,d-1}}$ is the Cartesian product $\cross_1 \times \cross_{d-1}$. Therefore,
\begin{equation*}
\textup{vol}(\mathscr{B}_{K_{1,d-1}}) = 2 \cdot \textup{vol}(\cross_{d-1}) = \frac{2^d}{(d-1)!}. 
\end{equation*}
The expression for the ML degree follows by applying Corollary \ref{cor:product} and Theorem \ref{theorem24}:
\begin{equation*}
\textup{mldeg}(\mathscr{B}_{K_{1,d-1}}) = \textup{mldeg}(\cross_1) \cdot \textup{mldeg}(\cross_{d-1}) = 2 \cdot 2^{d-1} = 2^d. \qedhere
\end{equation*}
\end{proof}

\addcontentsline{toc}{section}{References}
\bibliographystyle{plainnat}
\bibliography{bib}

\vspace{0.25cm}

\noindent{\bf Authors' addresses:}
\smallskip
\small 

\noindent Carlos Am\'{e}ndola,
Technische Universit\"at Berlin
\hfill {\tt amendola@math.tu-berlin.de}

\noindent Janike Oldekop,
Technische Universit\"at Berlin
\hfill {\tt oldekop@math.tu-berlin.de}

\end{document}